\documentclass[11pt]{amsart}
\usepackage[top=1in, bottom=1in, left=1in, right=1in]{geometry}

\usepackage{mathtools,mathrsfs,amsthm,amsfonts,amssymb,graphicx,appendix,cancel,caption,subcaption}
\usepackage{xspace,enumerate,comment,bbm}
\usepackage[super]{nth}

\usepackage{tikz,pgfplots}
%\pgfplotsset{width=8.5cm,compat=1.12}
%\usepgfplotslibrary{external}
%\tikzexternalize

\usepackage{hyperref}
\usepackage[normalem]{ulem}
\usepackage[active]{srcltx}

\numberwithin{equation}{section}
\newtheorem{theorem}{\sc Theorem}[section]

\newtheorem{corollary}[theorem]{\sc Corollary}

\newtheorem{lemma}[theorem]{\sc Lemma}
\newtheorem{proposition}[theorem]{\sc Proposition}

\theoremstyle{remark}
\newtheorem{remark}[theorem]{\sc Remark}

\newcommand{\ol}[1]{\overline{#1}}
\newcommand{\ul}[1]{\underline{#1}}

\renewcommand{\t}[1]{\tilde{#1}}

\renewcommand{\epsilon}{\varepsilon}
\renewcommand{\le}{\leqslant}
\renewcommand{\ge}{\geqslant}
\newcommand{\pd}{\partial}

\newcommand{\Con}{\operatorname{C}}
\newcommand{\UC}{\operatorname{UC}}
\newcommand{\Lip}{\operatorname{Lip}}

\makeatletter
\@namedef{subjclassname@2020}{\textup{2020} Mathematics Subject Classification}
\makeatother

\begin{document}
	
\title[Loss of quasiconvexity in the periodic homogenization of viscous HJ equations]{Loss of quasiconvexity in the periodic homogenization of viscous Hamilton-Jacobi equations}

\author[E.\ Kosygina]{Elena Kosygina}
\address{Elena Kosygina\\ Department of Mathematics\\ Baruch College\\  One Bernard Baruch Way\\ Box B6-230, New York, NY 10010\\ USA}
\email{elena.kosygina@baruch.cuny.edu}
\urladdr{http://www.baruch.cuny.edu/math/elenak/}
\thanks{E.\ Kosygina was partially supported by the Simons Foundation (Award \#523625).}

\author[A.\ Yilmaz]{Atilla Yilmaz}
\address{Atilla Yilmaz\\ Department of Mathematics\\ Temple University\\ 1805 North Broad Street, Philadelphia, PA 19122, USA}
\email{atilla.yilmaz@temple.edu}
\urladdr{http://math.temple.edu/$\sim$atilla/}
\thanks{A.\ Yilmaz was partially supported by the Simons Foundation (Award \#949877).}
	
%\date{\today}
\date{September 21, 2023}

\subjclass[2020]{35B27, 35F21, 35D40.} % 35B27 (Homogenization in context of PDEs; PDEs in media with periodic structure); 35F21 (Hamilton-Jacobi equations); 35D40 (Viscosity solutions to PDEs); 60G10 Stationary stochastic processes; 60K37 Processes in random environments.
\keywords{Viscous Hamilton-Jacobi equation, periodic homogenization, stochastic homogenization, viscosity solution, quasiconvexity, level-set convexity}

\begin{abstract}
	We show that, in the periodic homogenization of uniformly elliptic Hamilton-Jacobi equations in any dimension, the effective Hamiltonian does not necessarily inherit the quasiconvexity property (in the momentum variables) of the original Hamiltonian. This observation is in sharp contrast with the first order case, where homogenization is known to preserve quasiconvexity. We also show that the loss of quasiconvexity is, in a way, generic: when the spatial dimension is $1$, every convex function $G$ can be modified on an arbitrarily small open interval so that the new function $\t{G}$ is quasiconvex and, for some 1-periodic and Lipschitz continuous $V$, the effective Hamiltonian arising from the homogenization of the uniformly elliptic Hamilton-Jacobi equation with the Hamiltonian $H(p,x)=\t{G}(p)+V(x)$ is not quasiconvex.
\end{abstract}

\maketitle

\section{Introduction}\label{sec:intro}

This paper considers the periodic homogenization (as $\epsilon\downarrow 0$) of the equation
\begin{equation}
	\label{eq:HJint}
	\pd_t u^\epsilon=\epsilon \sigma^2\Delta_x u^\epsilon+H\left(D_x u^\epsilon,\frac{x}{\epsilon}\right),\quad (t,x)\in(0,\infty)\times \mathbb{R}^d,
\end{equation}
with $u^\epsilon(0,x)=g(x)$, $x\in\mathbb{R}^d$. We shall assume
that $\sigma\ge 0$ is a constant,
$H\in \Lip_{\text{loc}}(\mathbb{R}^d\times\mathbb{R}^d)$ is
superlinear in the momentum variables and $1$-periodic in
each of the spatial variables, and $g$ is uniformly continuous. Even
though many of the quoted below results are known for a general
degenerate elliptic operator $\text{tr}(A(x)D_x^2)$ in place of
$\sigma^2\Delta_x$, our discussion will focus on the above model viscous
case $\sigma>0$ vs.\ the inviscid case $\sigma=0$.

It is well known that, under a set of standard growth and regularity
conditions, equation \eqref{eq:HJint} homogenizes (see \cite{LPV}, \cite{Evans92} and Appendix \ref{app:super}), that is, there exists a
continuous function $\ol{H}:\mathbb{R}^d\to \mathbb{R}$ such that, for
every uniformly continuous $g$, viscosity solutions $u^\epsilon$ of
\eqref{eq:HJint} converge as $\epsilon\downarrow 0$ locally uniformly in
$(t,x)$ to the unique viscosity solution $\ol{u}$ of the effective, or
averaged, equation
\begin{equation}\label{eq:effHJint}
	\pd_t \ol{u}=\ol{H}(D_x\ol{u}),\quad (t,x)\in (0,\infty)\times\mathbb{R}^d,
\end{equation}
satisfying $\ol{u}(0,\,\cdot\,)=g(\,\cdot\,)$.

We shall be mainly interested in Hamiltonians $H(p,x)$ which, in the
addition to the above listed properties, are quasiconvex in $p$, i.e.,
for all $x\in\mathbb{R}^d$ and $\lambda\in\mathbb{R}$, the sublevel
sets $\{p\in\mathbb{R}^d:\,H(p,x)\le \lambda\}$ are convex. The
question we address below is whether $\ol{H}$ necessarily inherits the
quasiconvexity property of $H$. We also point out why this property is
important for the largely open problem of stochastic homogenization of
viscous Hamilton-Jacobi equations with quasiconvex Hamiltonians, of
which \eqref{eq:HJint} is a special case.

First of all, we recall that, if $H(\,\cdot\,,x)$ is convex for all
$x\in\mathbb{R}^d$, then $\ol{H}$ is convex (\cite{LPV}, \cite{Evans92}). Moreover, there are $\inf$-$\sup$ formulas for
$\ol{H}$ (see \cite{CIPP98, Gom02} and \cite{LS2005, KRV, LS_revisited}), from which one can also see directly that the convexity of
$H(\,\cdot\,,x)$ implies the convexity of $\ol{H}$.

If $\sigma=0$ and $H$ is only quasiconvex in $p$\footnote{and, except
  for the minimal level set, all level sets of $H$ in $p$ have empty
  interior, see \cite[(H2), p.\,763]{DS09} and \cite[(2.8),
  p.\,3424]{AS} for the precise formulation which will be in force
  throughout the discussion}, then $\ol{H}$ is also quasiconvex
(\cite{DS09}($d=1$), \cite{AS}). Moreover, the inf-sup formula for
$\ol{H}$ in \cite{LS2005} (with $A\equiv 0$) from the convex case
remains valid.  An extension of the formula from \cite{CIPP98} to the
quasiconvex case is given in \cite{Nak}. These representations
manifest the preservation of quasiconvexity in the inviscid
case. In fact, there are classes of non-quasiconvex Hamiltonians
for which the effective Hamiltonian is quasiconvex. This
quasiconvexification effect was observed in \cite{ATY_nonconvex,ATY_1d} and
thoroughly studied in \cite{QTY}.

When $\sigma>0$ and $H$ is only quasiconvex, there is no known formula
for $\ol{H}$. Our main results (see Section \ref{sec:results}) show,
first in one dimension and then in all dimensions $d\ge2$, that there are
Hamiltonians $H(p,x)=G(p)+V(x)$ with $G$ quasiconvex and $V$ periodic,
satisfying all standard growth and regularity conditions, such that
$\ol{H}$ fails to be quasiconvex. 
Moreover, when $d=1$, such a quasiconvex $G$ can be constructed
starting with any convex function in $\Con^2(\mathbb{R})$ that satisfies standard growth conditions
and then modifying it on an arbitrarily small open interval.

Our results imply that, for $\sigma>0$,
unlike in the inviscid case, none of the $\inf$-$\sup$ formulas for
$\ol{H}$ can be extended, in general, from the convex to the
quasiconvex setting. Indeed, if such a formula were to hold for
$\sigma>0$, then the effective Hamiltonian for a quasiconvex $H$ would
be necessarily quasiconvex, which, as we show in this paper, need not
be true. This surprising, at least to the authors, discovery that
adding a viscous term to the equation can lead to a loss of
quasiconvexity of $\ol{H}$ goes somewhat against the tacit expectation
that essentially all major qualitative phenomena observed in the
homogenization of inviscid Hamilton-Jacobi equations should extend
to the viscous case, albeit with the understanding that such extensions
typically would not be straightforward and will require new ideas.

Apart from showing the striking difference in the attainable
``shapes'' of $\ol{H}$ in averaging of viscous vs.\ inviscid equations
\eqref{eq:HJint} with quasiconvex $H$, our results also contribute a new
insight into the study of the more general problem of stochastic
homogenization of Hamilton-Jacobi equations. Homogenization results as
well as the preservation of convexity (for $\sigma\ge 0$,
\cite{Sou99,RT,LS2005,KRV, LS_revisited,AT14}) and quasiconvexity (for
$\sigma=0$, \cite{DS09,AS}) are also known in the general stationary
ergodic setting, namely, when $H(p,x)$ is replaced with a stationary
(with respect to the shifts in $x$) ergodic process $H(p,x,\omega)$ on
some probability space $(\Omega,{\mathcal F},\mathbb{P})$. The above
references also include $\inf$-$\sup$ formulas for $\ol{H}$ when $H$
is convex and $\sigma\ge 0$ or when $H$ is quasiconvex and $\sigma=0$.
In the stationary ergodic setting, the question as to whether viscous
Hamilton-Jacobi equations with quasiconvex Hamiltonians homogenize
currently remains open for $d\ge 2$.  Even the one-dimensional viscous
case proved to be much more difficult than the inviscid one, and the
homogenization result for quasiconvex $H$ has not yet been obtained in
the desired generality.

More precisely, for $d=1$, $\sigma>0$, and
$H(p,x,\omega)=G(p)+V(x,\omega)$ with quasiconvex superlinear $G$,
homogenization of \eqref{eq:HJint} in stationary ergodic media has been
shown in \cite{Y21b} under the additional assumption that the potential $V$ 
satisfies the so-called ``hill condition'' (see \eqref{eq:hilval}).
This condition\footnote{together with the
	analogously defined ``valley condition'', see \eqref{eq:hilval}} was introduced in
\cite{YZ19,KYZ20}, and it holds for a rich class of random potentials,
but fails when $V$ is periodic (or ``rigid'' in some other way), see
\cite[Appendix B]{DK22} for a discussion. The hill condition, in
particular, guarantees that $\ol{H}$ is quasiconvex. The comparison of
this fact with our results begs the following questions (for $\sigma>0$):
\begin{itemize}
	\item [(i)] Can one  characterize stationary ergodic media
	which preserve quasiconvexity?
	\item [(ii)] In the case when the stationary ergodic medium preserves
	quasiconvexity, do any of the $\inf$-$\sup$ formulas for $\ol{H}$
	extend from convex to quasiconvex $H$?
\end{itemize}

Even though we expect that, for $d=1$, the viscous equation
\eqref{eq:HJint} with quasiconvex superlinear stationary ergodic $H$
homogenizes without any additional assumptions on the random medium, our
results indicate that proving this conjecture might not be 
easier than proving homogenization in the general nonconvex case.

For $d=1$ and $\sigma=0$, homogenization for general nonconvex
coercive $H$ has been established in \cite{ATY_1d,Gao16}. The analogous result for
the viscous case when $H(p,x,\omega)=G(p)+V(x,\omega)$, where $G$ is
superlinear and $V$ satisfies the hill-and-valley condition, was
recently proven in \cite{DKY23+}.

It has been shown by counterexamples that, for $d\ge 2$, equation
\eqref{eq:HJint} with nonconvex superlinear $H$ considered in stationary
ergodic media can fail to homogenize if $H(\,\cdot\,,x,\omega)$ has a
strict saddle point, otherwise being standard, and the environment is
slowly mixing (\cite{Zil, FS, FFZ}). Hence, the completion of the study
of homogenization of viscous Hamilton-Jacobi equations with nonconvex
$H$ in dimension one and any general results for quasiconvex $H$ in
dimensions two and higher still remain challenging open problems.

\section{Results}\label{sec:results}

\subsection{Basic notation and standing assumptions}\label{ss:setting}

Throughout the paper, we will denote by $\Con(X)$, $\UC(X)$ and $\Lip(X)$ the sets of functions on $X$ that are continuous, uniformly continuous and Lipschitz continuous, respectively, where $X$ will be $\mathbb{R}^d$ or $[0,\infty)\times\mathbb{R}^d$.

We start with $d=1$ and consider a Hamilton-Jacobi (HJ) equation of the form 
\begin{equation}\label{eq:HJ}
	\pd_tu= \sigma^2\partial^2_{xx}u + G(\pd_x u) + V(x),\quad(t,x)\in (0,\infty)\times\mathbb{R},
\end{equation}
where $\sigma \ge 0$ is a constant, $G\in\Con(\mathbb{R})$ and it is coercive, i.e., \[ \lim_{p\to\pm\infty}G(p) = \infty, \] and $V\in\Lip(\mathbb{R})$ is a bounded function, referred to as the {\em potential}. We will be concerned with viscosity solutions of \eqref{eq:HJ}, see \cite{users,barles_book,bardi} for background.

Define
\begin{equation}\label{eq:calG}
\begin{aligned}
	\mathcal{G}_0 &= \{G\in\Con(\mathbb{R}):\,\text{$G$ is coercive and, for every $\sigma\ge 0$ and every bounded $V\in\Lip(\mathbb{R})$,}\\
	&\hspace{27mm}\text{the Cauchy problem for (2.1) is well-posed in $\UC([0,\infty)\times\mathbb{R})$}\}.
\end{aligned}
\end{equation}
In Section \ref{ss:prev}, we will recall conditions under which $G\in\mathcal{G}_0$. %(Since those sufficient conditions will not be explicitly used in our arguments, we do not give them here.)

Note that, when $\sigma = 0$, \eqref{eq:HJ} is an {\em inviscid} (i.e., first-order) HJ equation. Our main result in one dimension (Theorem \ref{thm:hicaz}) is concerned with the {\em viscous} (i.e., second-order) case, where we will take $\sigma = 1$ as there is no loss of generality in doing so.

For every $\epsilon > 0$, if $u$ is a viscosity solution of \eqref{eq:HJ}, then so is $u^\epsilon(t,x) = \epsilon u\left(\frac{t}{\epsilon},\frac{x}{\epsilon}\right)$ for the equation
\begin{equation}\label{eq:epsHJ}
	\pd_t u^\epsilon=\epsilon \sigma^2 \partial^2_{xx} u ^\epsilon+G(\pd_x u^\epsilon) + V\left(\frac{x}{\epsilon}\right),\quad (t,x)\in (0,\infty)\times\mathbb{R}.
\end{equation}
Recall from Section \ref{sec:intro} that the HJ equation \eqref{eq:epsHJ} homogenizes if there exists a continuous function $\ol{H}:\mathbb{R}\to\mathbb{R}$, called the {\em effective Hamiltonian}, such that, for every $g\in\UC(\mathbb{R})$, the unique viscosity solution $u^\epsilon$ of \eqref{eq:epsHJ} satisfying $u^\epsilon(0,\,\cdot\,) = g$ converges locally uniformly on $[0, \infty)\times\mathbb{R}$ as $\epsilon\to 0$ to
the unique viscosity solution $\ol{u}$ of the HJ equation
\begin{equation}\label{eq:effHJ}
	\pd_t \ol{u}=\ol{H}(D\ol{u}),\quad (t,x)\in (0,\infty)\times\mathbb{R},
\end{equation}
satisfying $\ol{u}(0,\,\cdot\,) = g$.

\subsection{Overview of previous results in one dimension}\label{ss:prev}

In this brief account, we will restrict our attention to the homogenization of HJ equations of the form in \eqref{eq:epsHJ} in one space dimension, put precise conditions on $G$ and $V$, and recapitulate the previously obtained results that are most relevant to our discussion.

We shall say that $G\in\mathcal{G}_1$ if there exist $\alpha_0,\alpha_1>0$ and $\eta>1$ such that
\begin{equation}\label{eq:superG}
\begin{aligned}
	&\alpha_0|p|^\eta-1/\alpha_0\le G(p)\le\alpha_1(|p|^\eta+1)\ \text{for all ${p\in\mathbb{R}}$, and}\\
	&|G(p)-G(q)|\le\alpha_1\left(|p|+|q|+1\right)^{\eta-1}|p-q|\ \text{for all $p,q\in\mathbb{R}$.}
\end{aligned}
\end{equation}
With this notation, $\mathcal{G}_1\subset\mathcal{G}_0$ (defined in \eqref{eq:calG}), see, e.g., \cite{DK17} and the references therein.

\subsubsection{Periodic homogenization}

In this setting, the potential $V$ is assumed to be $1$-periodic, i.e.,
\[ V(x+1) = V(x)\ \text{for all $x\in\mathbb{R}$}. \]

If $G\in\mathcal{G}_1$, then the HJ equation \eqref{eq:epsHJ} homogenizes. In the inviscid case ($\sigma = 0$), this result is covered by \cite{ATY_1d}. In the viscous case ($\sigma > 0$), it follows from \cite{Evans92} in combination with other known results and estimates, see Appendix \ref{app:super}.

\subsubsection{Stochastic homogenization}\label{sss:stohom}

In this setting, the potential $V$ is assumed to be a realization of a stationary ergodic stochastic process. Then, homogenization is understood in almost sure sense with respect to the underlying probability measure, and by ergodicity, the effective Hamiltonian $\ol{H}$ is constant almost surely. See, e.g., \cite{JKO} or any of the references below for details.

If $G\in\mathcal{G}_1$ is convex, then the HJ equation \eqref{eq:epsHJ} homogenizes, and the effective Hamiltonian $\ol{H}$ is convex. This result is covered by \cite{Sou99,RT} in the inviscid case and by \cite{LS2005,KRV} in the viscous case.

If we drop the convexity assumption on $G$, then the homogenization of \eqref{eq:epsHJ} is established in \cite{ATY_1d} in the inviscid case. The analogous result in the viscous case is covered by \cite{DKY23+} under the additional assumption that $V$ satisfies the {\em hill-and-valley condition}:
\begin{equation}\label{eq:hilval}
\begin{aligned}
	\text{(hill)}\qquad \mathbb{P}(V(\,\cdot\,,\omega) \ge B - h\ \text{on}\ [0,y]) > 0\ &\text{for all $h\in(0,B)$ and $y>0$, and}\\
	\text{(valley)}\qquad\mathbb{P}(V(\,\cdot\,,\omega) \le h -B\ \text{on}\ [0,y]) > 0\ &\text{for all $h\in(0,B)$ and $y>0$,}
\end{aligned}
\end{equation}
where $B := \sup\{|V(x,\omega)|:\,x\in\mathbb{R}\} < \infty$ (which is non-random by ergodicity).

If, instead of dropping the convexity assumption on $G$ altogether, we replace it by {\em quasiconvexity}, i.e., assume that
\begin{equation}\label{eq:quasicon}
	\text{the sublevel set $\{p\in\mathbb{R}:\,G(p) \le \lambda\}$ of $G$ is an interval (possibly empty) for every $\lambda\in\mathbb{R}$,}
\end{equation}
then the homogenization of \eqref{eq:epsHJ} was proved earlier in \cite{DS09} (and covered by \cite{AS}) in the inviscid case and by \cite{Y21b} in the viscous case under the hill condition in \eqref{eq:hilval}. In both cases, the effective Hamiltonian $\ol{H}$ inherits the quasiconvexity of $G$.

\subsection{Our main result in one dimension}\label{ss:main}

Recall from Section \ref{ss:prev} that, when $\sigma = 0$, in the stochastic homogenization of \eqref{eq:epsHJ} (which includes periodic $V$), if $G$ is quasiconvex, then $\ol{H}$ is quasiconvex, too. Similarly, when $\sigma = 1$, in the stochastic homogenization of \eqref{eq:epsHJ} under the hill condition in \eqref{eq:hilval} (which excludes periodic $V$), if $G$ is quasiconvex, then so is $\ol{H}$.

In light of the paragraph above, it is natural to propose the following statements:
\begin{itemize}
	\item [(i)] the stochastic homogenization of \eqref{eq:epsHJ} holds in the viscous case without the hill-and-valley condition \eqref{eq:hilval}, and 
	\item [(ii)] if $G$ is quasiconvex, then $\ol{H}$ is quasiconvex, too.
\end{itemize}
As we have said in Section \ref{sec:intro}, the first statement is an open problem.

In this paper, we show that the second statement above is false by constructing a class of counterexamples in the setting of periodic homogenization. We work with coercive $G\in\Con^2(\mathbb{R})$ that are nonconvex (but still allowed to be quasiconvex) in a prescribed way, and then construct a potential $V$ for which we show that the effective Hamiltonian $\ol{H}$ corresponding to the original Hamiltonian $G(p) + V(x)$ is not quasiconvex.

Here is our first main result, in which we tacitly assume that the HJ equation \eqref{eq:epsHJ} homogenizes. Recall that $G\in\mathcal{G}_1$ (see \eqref{eq:superG}) is a sufficient condition.

\begin{theorem}\label{thm:hicaz}
	Assume that $G\in\Con^2(\mathbb{R})\cap\mathcal{G}_0$. Fix $p_1,p_2\in\mathbb{R}$ such that $p_1 < p_2$ and
	\begin{equation}\label{eq:gecit}
		G'(p_1) < 0 < G'(p_2).
	\end{equation}
	Let $K_1 = \max\{|G'(p)|:\,p\in[p_1,p_2]\}$. Assume further that
	\begin{equation}\label{eq:Gduz}
		G''(p_1) < 0\quad\text{and}\quad G''(p_1)G'(p_2)e^{-K_1} < G''(p_2)G'(p_1)e^{K_1}.
	\end{equation}
	Then, there is a $1$-periodic potential $V\in\Lip(\mathbb{R})$ such that the effective Hamiltonian $\ol{H}$ in \eqref{eq:effHJ} that arises from the homogenization of \eqref{eq:epsHJ} {\rm (}with $\sigma = 1${\rm )} is not quasiconvex.
\end{theorem}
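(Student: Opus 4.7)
The proof strategy centers on reducing the one-dimensional viscous cell problem to a first-order ODE, deriving a sensitivity formula for $\ol{H}'(p)$, and then constructing $V$ so that this formula produces a sign reversal of $\ol{H}'$ between $p_1$ and $p_2$, which rules out quasiconvexity.

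First I would note that, in one space dimension with $\sigma=1$, the viscous cell problem for \eqref{eq:epsHJ} -- find $\lambda=\ol{H}(p)$ and a $1$-periodic $w$ with $w''+G(p+w')+V=\lambda$ -- reduces, via the substitution $q=p+w'$, to the first-order scalar ODE
\begin{equation*}
q'(x)+G(q(x))+V(x)=\lambda,\qquad q\ \text{is $1$-periodic},\quad \int_0^1 q(x)\,dx=p.
\end{equation*}
Linearizing in $\lambda$ along a smooth branch $\lambda\mapsto q_\lambda$ of $1$-periodic solutions and setting $\tilde q:=\partial_\lambda q_\lambda$, I obtain the linear periodic ODE $\tilde q'+G'(q_\lambda)\tilde q=1$. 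Since $dp/d\lambda=\int_0^1\tilde q\,dx$, this yields the sensitivity formula $\ol{H}'(p)=1/\int_0^1\tilde q(x)\,dx$. Solving the $\tilde q$-equation via the integrating factor $\mu_p(x)=\exp\bigl(\int_0^x G'(q_p(y))\,dy\bigr)$ expresses $\ol{H}'(p)$ in closed form through $\mu_p(1)$, $\int_0^1\mu_p$, and $\int_0^1\mu_p^{-1}$. The exponential factors $e^{\pm K_1}$ appearing in hypothesis \eqref{eq:Gduz} enter through the elementary a priori bound $e^{-K_1}\le \mu_p(1)\le e^{K_1}$, which is available whenever $q_p([0,1])\subset[p_1,p_2]$ (the range on which $|G'|\le K_1$).

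The core of the proof is the construction of $V$, and it is not perturbative: a choice $V=\delta V_0$ with small $\delta$ modifies $\ol{H}'$ only at order $\delta^2$, which is too weak to reverse the sign of $\ol{H}'$ at $p_1$ or $p_2$. Instead I would build $V$ by reverse engineering -- pick a carefully designed Lipschitz $1$-periodic profile $\phi$ with values in $[p_1,p_2]$ and set $V(x):=\lambda_*-G(\phi(x))-\phi'(x)$, which makes $\phi$ a $1$-periodic solution of the reduced cell problem at $p=\int_0^1\phi$ with eigenvalue $\lambda=\lambda_*$. Continuing this branch in $p$ produces a smooth family $\{q_p\}$ on an interval of momenta, and the plan is to tune the shape of $\phi$ (using the concavity $G''(p_1)<0$, which allows the branch to bend back on itself near $p_1$) and the constant $\lambda_*$ so that: (a) both $p_1$ and $p_2$ lie on this same continuation branch; (b) $q_p$ stays inside $[p_1,p_2]$ throughout $p\in[p_1,p_2]$; and (c) the integrals $\int_0^1\tilde q(x)\,dx$ at $p=p_1$ and $p=p_2$ have opposite signs, forcing $\ol{H}'(p_1)>0>\ol{H}'(p_2)$. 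Using the closed-form expression from the sensitivity formula, step (c) reduces to a quantitative inequality in $G'(p_i)$, $G''(p_i)$, and the monodromy $\mu_{p_i}(1)\in[e^{-K_1},e^{K_1}]$; the inequality \eqref{eq:Gduz} is precisely the threshold that makes this simultaneously achievable at both endpoints, uniformly over the admissible range of $\mu_{p_i}(1)$.

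Once $\ol{H}'(p_1)>0>\ol{H}'(p_2)$ is established, continuity of $\ol{H}$ forces a strict local maximum on $(p_1,p_2)$, so some sublevel set of $\ol{H}$ is disconnected and quasiconvexity fails. The main obstacle will be carrying out items (a)--(c) of the construction: explicitly producing a Lipschitz $\phi$ for which $p_1$ and $p_2$ are realized on the same branch with $q_p$ confined to $[p_1,p_2]$, and verifying the sign-reversal inequalities quantitatively. The role of \eqref{eq:Gduz} is exactly to provide the margin needed to close this argument by dominating the worst-case exponential deformation of the monodromy $\mu_p$ over the range $[p_1,p_2]$.
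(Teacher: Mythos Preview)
Your reduction to the first-order ODE, the integrating-factor sensitivity analysis, and the reverse-engineering of $V$ from a chosen profile are all correct and match the paper. The gap is in where you try to detect the sign change of $\ol{H}'$ and how you think the second-derivative hypothesis enters.

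You aim to realize both $p_1$ and $p_2$ as values of the averaged momentum $p=\int_0^1 q_p$ while keeping $q_p([0,1])\subset[p_1,p_2]$ throughout. But if $\int_0^1 q_{p_1}=p_1$ and $q_{p_1}\ge p_1$, then $q_{p_1}\equiv p_1$, which forces $V$ constant; likewise at $p_2$. So constraint (b) at the endpoints trivializes the construction, and there is no room to arrange $\ol{H}'(p_1)>0>\ol{H}'(p_2)$. Relatedly, the role you assign to $G''(p_1)<0$ (``allows the branch to bend back on itself near $p_1$'') is not how this hypothesis is actually used.

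The paper detects the failure of quasiconvexity \emph{locally} near a single $\theta_0\in(p_1,p_2)$. One builds a profile $f$ that sits at $p_1$ on a set of length $\approx L$ and at $p_2$ on a set of length $\approx 1-L$, with $L=G'(p_2)/(G'(p_2)-G'(p_1))$ chosen so that $I_{\theta_0}(1)=\int_0^1 G'(f)=0$; then $V:=-f'-G(f)$ and $\theta_0:=\int_0^1 f$. Your sensitivity formula gives that the sign of $\ol{H}'(\theta)$ agrees with the sign of $I_\theta(1)=\log\mu_\theta(1)$, so the point is to show $I_{\theta_0+c}(1)<0<I_{\theta_0-c}(1)$ for small $c$. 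Writing $I_{\theta_0+c}(1)-I_{\theta_0}(1)=\int_0^1 G''(f^*)(f_{\theta_0+c}-f_{\theta_0})$, the conditions $G''(p_1)<0$ and \eqref{eq:Gduz} enter here: since $f=f_{\theta_0}$ spends almost all its time at $p_1$ or $p_2$, the integral is a weighted combination of $G''(p_1)$ and $G''(p_2)$, and the factors $e^{\pm K_1}$ arise from a Harnack-type bound $(\theta_2-\theta_1)e^{-K_1}\le f_{\theta_2}-f_{\theta_1}\le(\theta_2-\theta_1)e^{K_1}$ on the difference of correctors. Hypothesis \eqref{eq:Gduz} is exactly what makes this weighted sum negative. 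Once $I_{\theta_0-c}(1)>0>I_{\theta_0+c}(1)$, your own monodromy argument gives $\ol{H}$ increasing to the right of $\theta_0-c$ and to the left of $\theta_0+c$, producing the strict local maximum. In short: keep your machinery, but look for the sign change of $I_\theta(1)$ across a single point $\theta_0$ (a second-order effect in $\theta$), not for a sign change of $\ol{H}'$ across the whole interval $[p_1,p_2]$.
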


\begin{remark}\label{rem:ters}
	By the change of variables $(p,x) \mapsto (-p,-x)$, condition \eqref{eq:Gduz} can be replaced with
	\begin{equation}\label{eq:Gters}
		G''(p_2) < 0\quad\text{and}\quad G''(p_1)G'(p_2)e^{K_1} < G''(p_2)G'(p_1)e^{-K_1}.
	\end{equation}
	Note that if
	\begin{equation}\label{eq:resmi}
		G''(p_1) < 0\quad\text{and}\quad G''(p_2) < 0,
	\end{equation}
	then both of \eqref{eq:Gduz} and \eqref{eq:Gters} are satisfied. See Figure \ref{fig:cor} for an example.
\end{remark}

\begin{figure}
	\includegraphics[width=0.5\linewidth]{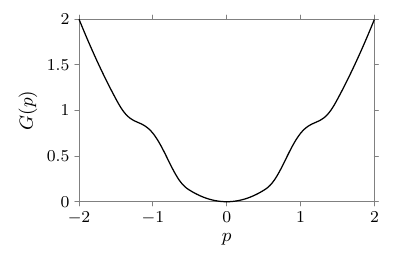}
	\caption{An example of a quasiconvex $G\in\Con^2(\mathbb{R})$ that satisfies \eqref{eq:gecit} and \eqref{eq:resmi} with $p_1 = -1$ and $p_2 = 1$.}
	\label{fig:cor}
\end{figure}

The statement of Theorem \ref{thm:hicaz} does not include \eqref{eq:quasicon} as a hypothesis, but it covers a wide class of quasiconvex $G$. In fact, we have the following corollary.

\begin{corollary}\label{cor:modify}
	For every convex $G\in\Con^2(\mathbb{R})\cap\mathcal{G}_0$ and $p_*,p^*\in\mathbb{R}$ with $p_* < p^*$, there is a quasiconvex $\t{G}\in\Con^2(\mathbb{R})\cap\mathcal{G}_0$ and a $1$-periodic potential $V\in\Lip(\mathbb{R})$ such that $\t{G}(p) = G(p)$ for all $p\not\in(p_*,p^*)$, and the effective Hamiltonian $\ol{H}$ in \eqref{eq:effHJ} that arises from the homogenization of \eqref{eq:epsHJ} {\rm (}with $\sigma = 1$ and $G$ replaced with $\t{G}${\rm )} is not quasiconvex.
\end{corollary}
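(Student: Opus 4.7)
The plan is to apply Theorem~\ref{thm:hicaz} after constructing a suitable $\tilde G$. I would take
\[
\tilde G(p) = G(p) - h\,\phi\bigl((p-\hat p)/w\bigr),
\]
where $\phi\in C_c^\infty(-1,1)$ is a fixed non-negative bump with $\phi(0)=1$, the centre $\hat p$ lies in $(p_*,p^*)$, and $h,w>0$ are chosen so that $[\hat p-w,\hat p+w]\subset(p_*,p^*)$. This automatically yields $\tilde G=G$ off $(p_*,p^*)$ and $\tilde G\in C^2(\mathbb{R})$; a bounded compactly supported $C^2$ perturbation preserves coercivity and the well-posedness class, so $\tilde G\in\mathcal G_0$. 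The task reduces to choosing $\hat p$, $h$, $w$, and two points $p_1<p_2$ so that $\tilde G$ is quasiconvex and the hypotheses of Theorem~\ref{thm:hicaz} hold for $\tilde G$.

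In the first case, when $\arg\min G$ meets $(p_*,p^*)$, I place $\hat p$ in this intersection. Because $G$ is convex, $\tilde G$ is unimodal with global minimum at $\hat p$, hence quasiconvex. I then pick $p_1<\hat p$ and $p_2>\hat p$ inside $[\hat p-w,\hat p+w]$ at scaled positions where $\phi''>0$. Since $\tilde G''(p_i)=G''(p_i)-(h/w^2)\phi''(\cdot)$, taking $h/w^2$ sufficiently large makes $\tilde G''(p_1),\tilde G''(p_2)<0$, while $\tilde G'(p_1)<0<\tilde G'(p_2)$ is automatic from the signs of $G'$ and $\phi'$. This is the configuration~\eqref{eq:resmi}, so Theorem~\ref{thm:hicaz} applies.

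In the second case, when $\arg\min G\cap(p_*,p^*)=\emptyset$, the symmetry recorded in Remark~\ref{rem:ters} allows me to assume $G$ is strictly increasing on $[p_*,p^*]$; then $G'\ge c>0$ on $[\hat p-w,\hat p+w]$ for some $c$ after shrinking $w$. Imposing $(h/w)\|\phi'\|_\infty<c$ keeps $\tilde G'>0$ on $(p_*,p^*)$, so $\tilde G$ remains strictly increasing there and is quasiconvex on $\mathbb R$. I take $p_1$ to the left of $\arg\min G$, so that $\tilde G'(p_1)=G'(p_1)<0$ and $\tilde G''(p_1)=G''(p_1)\ge 0$; and $p_2\in(\hat p-w,\hat p)$ at a scaled position where $\phi',\phi''>0$. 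Enlarging $h/w^2$ makes $\tilde G''(p_2)$ as negative as desired, and letting $(h/w)\phi'((p_2-\hat p)/w)$ approach $G'(p_2)$ from below makes $\tilde G'(p_2)>0$ as small as desired. The algebraic inequality in condition~\eqref{eq:Gters},
\[
G''(p_1)\,\tilde G'(p_2)\,e^{K_1} < \tilde G''(p_2)\,G'(p_1)\,e^{-K_1},
\]
is equivalent in absolute values to $G''(p_1)\,\tilde G'(p_2)<|\tilde G''(p_2)|\,|G'(p_1)|\,e^{-2K_1}$; since $K_1$ stays bounded uniformly in the perturbation thanks to the $C^1$ smallness above, the inequality holds once $\tilde G'(p_2)/|\tilde G''(p_2)|$ is small enough, which the scaling ensures.

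The main obstacle is the second case. Quasiconvexity forbids $\tilde G$ from introducing a new local minimum in $(p_*,p^*)$, so the clean ``valley'' construction of the first case is unavailable; $p_1$ must lie outside $(p_*,p^*)$, where necessarily $\tilde G''(p_1)\ge 0$, and one must use the more delicate condition~\eqref{eq:Gters} rather than~\eqref{eq:resmi}. The resolution is to scale $h$ and $w$ independently: keeping $h/w<c/\|\phi'\|_\infty$ preserves monotonicity (quasiconvexity), while sending $h/w^2\to\infty$ makes $\tilde G''(p_2)$ strongly negative, and these two scalings together make $\tilde G'(p_2)/|\tilde G''(p_2)|$ arbitrarily small, which is exactly what~\eqref{eq:Gters} accommodates.
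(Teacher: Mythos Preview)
Your proof is correct and follows essentially the same bump-perturbation strategy as the paper: modify $G$ by a compactly supported $C^2$ bump inside $(p_*,p^*)$, check quasiconvexity, and verify the hypotheses of Theorem~\ref{thm:hicaz} (or its reflected form in Remark~\ref{rem:ters}). The paper organizes the cases by the sign of $G'(p_*)$ and $G'(p^*)$ rather than by whether $\arg\min G$ meets $(p_*,p^*)$, and in the monotone case it centers the bump so that $p_1$ (not $p_2$) acquires negative second derivative, invoking \eqref{eq:Gduz} rather than \eqref{eq:Gters}; these are symmetric variants of the same idea.

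One small imprecision in your second case: the constraint $(h/w)\|\phi'\|_\infty<c$ together with $\phi''(s_2)>0$ forces $s_2$ to stay away from the maximizer of $\phi'$, so $(h/w)\phi'(s_2)$ cannot approach $G'(p_2)$ and $\tilde G'(p_2)$ does \emph{not} become ``as small as desired.'' This is harmless, however, because your final sentence already identifies the correct mechanism: $\tilde G'(p_2)$ merely stays bounded while $|\tilde G''(p_2)|\sim (h/w^2)\phi''(s_2)\to\infty$, and since $K_1$ remains bounded (as you note), this drives the ratio $\tilde G'(p_2)/|\tilde G''(p_2)|$ to zero and yields \eqref{eq:Gters}.
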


\subsection{Extension to higher dimensions}

Consider the HJ equation \eqref{eq:HJint} in any dimension $d\ge2$. Assume that the Hamiltonian $H$ satisfies the standard growth and regularity conditions \eqref{eq:Hkral1}--\eqref{eq:Hkral3} in Appendix \ref{app:super}.

Recall from Section \ref{sec:intro} that, in the context of stochastic homogenization, for $\sigma\ge0$, if $H$ is convex in $p$, then the effective Hamiltonian $\ol{H}$ in \eqref{eq:effHJint} is convex, too. Similarly, for $\sigma=0$, if $H$ is quasiconvex in $p$, then so is $\ol{H}$. Our second main result states that the latter implication is not true in the periodic setting for $\sigma=1$.

\begin{theorem}\label{thm:multi}
	For every $d\ge2$, there is a quasiconvex $G\in\Con^2(\mathbb{R}^d)$ and a potential $V\in\Lip(\mathbb{R}^d)$ such that $V$ is $1$-periodic in each coordinate and the Hamiltonian $H$ defined by $H(p,x) = G(p) + V(x)$ satisfies  \eqref{eq:Hkral1}--\eqref{eq:Hkral3}, so equation \eqref{eq:HJint} {\rm (}with $\sigma = 1${\rm )} homogenizes, but the effective Hamiltonian $\ol{H}$ in \eqref{eq:effHJint} is not quasiconvex.
\end{theorem}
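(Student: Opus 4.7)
The plan is to reduce Theorem~\ref{thm:multi} to Corollary~\ref{cor:modify} by lifting the one-dimensional counterexample to $\mathbb{R}^d$ in such a way that one line in momentum space recovers the 1D cell problem. First, I would apply Corollary~\ref{cor:modify} with the convex base function chosen to be strictly convex at its minimum $p_{\min}$, and the modification interval $(p_*,p^*)$ chosen disjoint from $p_{\min}$, to obtain a quasiconvex $\t{G}_1\in\Con^2(\mathbb{R})\cap\mathcal{G}_0$ and a $1$-periodic $V_1\in\Lip(\mathbb{R})$ such that the corresponding one-dimensional effective Hamiltonian $\ol{\t{H}}_1$ is not quasiconvex. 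After adding a constant, we may assume $\t{G}_1\ge 1$; this only shifts $\ol{\t{H}}_1$ and preserves non-quasiconvexity. With these choices $\{\t{G}_1''<0\}\subset[p_*,p^*]$, and on this compact set $|\t{G}_1'|$ is bounded away from $0$ (since $\t{G}_1$ is strictly monotone there---any interior critical point would contradict quasiconvexity of $\t{G}_1$). Set $V(y_1,\ldots,y_d):=V_1(y_1)$, which is Lipschitz and $1$-periodic in each coordinate.

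For the momentum function I propose the power-mean
\[
	G(p_1,p'):=\bigl(\t{G}_1(p_1)^N+\Psi(|p'|^2)\bigr)^{1/N}, \qquad p':=(p_2,\ldots,p_d),
\]
where $N>1$ is a large real parameter and $\Psi:[0,\infty)\to[0,\infty)$ is a $\Con^2$ convex increasing function with $\Psi(0)=0$ and growth $\Psi(s)\sim s^{N\eta/2}$ at infinity (so that $p'\mapsto\Psi(|p'|^2)$ is $\Con^2$ on $\mathbb{R}^{d-1}$ and $G$ inherits the superlinear rate). The identity
\[
	(\t{G}_1^N)''(p_1)=N\,\t{G}_1(p_1)^{N-2}\bigl[(N-1)(\t{G}_1'(p_1))^2+\t{G}_1(p_1)\t{G}_1''(p_1)\bigr],
\]
combined with the bounds on $|\t{G}_1'|$, $|\t{G}_1''|$, and $\t{G}_1$ on the compact set $\{\t{G}_1''<0\}$, shows that $\t{G}_1^N$ is convex on $\mathbb{R}$ once $N$ exceeds an explicit threshold. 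Hence $\t{G}_1(p_1)^N+\Psi(|p'|^2)$ is a positive $\Con^2$ convex function of $p$, bounded below by $1$, so its $N$-th root $G$ is $\Con^2$ and quasiconvex. Routine estimates yield $G(p)\asymp|p|^\eta$ with a gradient bound of the correct order, so that $H(p,x):=G(p)+V(x)$ satisfies \eqref{eq:Hkral1}--\eqref{eq:Hkral3} from Appendix~\ref{app:super}, the equation homogenizes to some $\ol{H}$, and crucially $G(p_1,0,\ldots,0)=\t{G}_1(p_1)$.

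Finally, I would identify $\ol{H}$ along the axis $\{p'=0\}$ by decoupling the cell problem. Let $w\in\Con^2(\mathbb{R})$ be the $1$-periodic corrector from the 1D cell problem for $(\t{G}_1,V_1,p_1)$, so $\t{G}_1(p_1+w'(y_1))+V_1(y_1)-w''(y_1)=\ol{\t{H}}_1(p_1)$. Because $V(y)=V_1(y_1)$ and $G(\,\cdot\,,0,\ldots,0)=\t{G}_1$, the extension $v(y):=w(y_1)$ is a $1$-periodic classical solution of the $d$-dimensional cell equation $G(p+Dv)+V-\Delta v=\ol{\t{H}}_1(p_1)$ at $p=(p_1,0,\ldots,0)$. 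Uniqueness of the ergodic constant then forces $\ol{H}(p_1,0,\ldots,0)=\ol{\t{H}}_1(p_1)$. Since the restriction of a quasiconvex function to any affine line is quasiconvex and $\ol{\t{H}}_1$ is not, $\ol{H}$ itself cannot be quasiconvex.

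The main obstacle is the construction of $G$: a priori the built-in concavity of $\t{G}_1$ in the $p_1$-direction is incompatible with convex sublevel sets for any simple additive extension $\t{G}_1(p_1)+F(p')$. The power-mean device resolves this by using the monotone scale change $u\mapsto u^N$ with $N$ large to convexify $\t{G}_1$, adding a convex $p'$-growth term, and recovering a quasiconvex $G$ via the $N$-th root---which is a smooth monotone bijection on $[1,\infty)$.
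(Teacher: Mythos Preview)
Your approach is correct and is a genuinely different route from the paper's. Both proofs reduce to the one–dimensional counterexample and then identify $\ol H$ along the line $\{p'=0\}$ by exhibiting a periodic corrector that depends only on $y_1$; the difference lies entirely in how the quasiconvex $G$ on $\mathbb{R}^d$ is built. The paper takes the \emph{additive} ansatz $G_1(p_1)+\sum_{i\ge 2}\breve G(p_i)$, which, as it observes in Section~\ref{ss:higher}, cannot be made quasiconvex on all of $\mathbb{R}^d$; it therefore shows (Lemma~\ref{lem:qbox}) that for a carefully chosen convex $\breve G$ the sublevel sets are convex on a large box $S_R$, checks via gradient bounds that the relevant correctors stay inside $S_R$, and then invokes an unspecified quasiconvex extension of $G$ outside $S_R$. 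Your \emph{power-mean} construction $G=(\t G_1^N+\Psi(|p'|^2))^{1/N}$ sidesteps this: once $\t G_1^N$ is convex (for $N$ large) the sum is globally convex, so $G$ is globally quasiconvex with no extension step, and the restriction $G(\cdot,0)=\t G_1$ is exact. The paper's approach accommodates any $G_1$ with $\sup\bigl(-G_1''/(G_1')^2\bigr)<\infty$; yours needs exactly the same finiteness (that is what makes $\t G_1^N$ convex), so the scope is comparable, but your argument is more self-contained.

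Two small points to tighten. First, your sign on the diffusion term is opposite to the paper's convention ($\pd_t u=\Delta u+H$ gives the cell equation $\Delta F+H(\theta+DF,x)=\ol H(\theta)$), though you are internally consistent. Second, the justification ``any interior critical point would contradict quasiconvexity of $\t G_1$'' is not true as stated: a $\Con^1$ quasiconvex function can have critical points away from its minimum (e.g.\ $p\mapsto (p-1)^3+1$ on $[0,\infty)$). What you actually need---and what holds---is that for the specific $\t G_\delta$ produced in Case~1 of the proof of Corollary~\ref{cor:modify}, one has $\t G_\delta'=G'+a\Psi'((\cdot-p_0)/\delta)\le -2a+a\cdot\tfrac{96}{25\sqrt5}<0$ on the support $[p_0-\delta,p_0+\delta]\supset\{\t G_\delta''<0\}$, so $|\t G_1'|$ is uniformly bounded below there. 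With that correction your convexification of $\t G_1^N$ goes through, and the rest of the argument (growth estimates for $G$, lifting the corrector, uniqueness of the ergodic constant from Appendix~\ref{app:super}) is routine.
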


\section{Outline of the proofs}\label{sec:inform}

In this section, we describe the constructions and arguments that lead to the proofs of our main results without providing all of the technical details or references.

\subsection{One dimension}\label{ss:heur1d}

Assume that $G\in\Con^1(\mathbb{R})$ is coercive, the potential $V\in\Lip(\mathbb{R})$ is $1$-periodic, and the HJ equation \eqref{eq:epsHJ} homogenizes. Then, for each $\theta\in\mathbb{R}$, there exists a $1$-periodic $F_\theta\in\Con^2(\mathbb{R})$ such that
\begin{align}
	F_\theta''(x) + G(\theta + F_\theta'(x)) + V(x) &= \ol{H}(\theta)\label{eq:biz}\\
	\shortintertext{for all $x\in\mathbb{R}$. Let $f_\theta(x) = \theta + F_\theta'(x)$. Note that $f_\theta\in\Con^1(\mathbb{R})$ is $1$-periodic, $\int_0^1f_\theta(x)dx = \theta$, and}
	f_\theta'(x) + G(f_\theta(x)) + V(x) &= \ol{H}(\theta).\label{eq:atar}
\end{align}

For the sake of heuristics, formally differentiate both sides of \eqref{eq:atar} with respect to $\theta$:
\[ (\pd_\theta f_\theta)'(x) + G'(f_\theta(x))(\pd_\theta f_\theta)(x) = \ol{H}'(\theta). \]
The general solution of this derived equation is given by the formula
\begin{equation}\label{eq:sign}
	(\pd_\theta f_\theta)(x) = \left(\ol{H}'(\theta)\int_0^xe^{I_\theta(y)}dy + (\pd_\theta f_\theta)(0)\right)e^{-I_\theta(x)}
\end{equation}
for all $x\in\mathbb{R}$, where
\begin{equation}\label{eq:bured}
	I_\theta(x) = \int_0^x G'(f_\theta(y))dy.
\end{equation}
Since $(\pd_\theta f_\theta)(1) = (\pd_\theta f_\theta)(0)$, this formula yields the identity
\begin{equation}\label{eq:iden}
	\ol{H}'(\theta)\int_0^1e^{I_\theta(x)}dx = (\pd_\theta f_\theta)(0)(e^{I_\theta(1)} - 1).
\end{equation}
This identity and \eqref{eq:sign} imply that $\pd_\theta f_\theta$ does not change sign. Noting that $\int_0^1(\pd_\theta f_\theta)(x)dx = \pd_\theta\left(\int_0^1f_\theta(x)dx\right) = 1$, we deduce that $\pd_\theta f_\theta$ is positive everywhere.

Next, fix $p_1 < p_2$ such that $G'(p_1) < 0 < G'(p_2)$, and let $\displaystyle{L = \frac{G'(p_2)}{G'(p_2) - G'(p_1)} \in (0,1)}$ so that
$G'(p_1)L + G'(p_2)(1-L) = 0$. It is easy to see that, for every $\ell\in(0,L\wedge(1-L))$, there is a $1$-periodic $f\in\Con^{1,1}(\mathbb{R})$ such that $p_1 \le f(x) \le p_2$ for all $x\in\mathbb{R}$, $f(x) = p_1$ for $x\in[0,L - \ell]$, $f(x) = p_2$ for $x\in[L,1-\ell]$, and
\begin{equation}\label{eq:costco}
	\int_0^1 G'(f(x))dx = 0.
\end{equation}
Define a $1$-periodic potential $V\in\Lip(\mathbb{R})$ via the equation
\begin{equation}\label{eq:burbez}
	f'(x) + G(f(x)) + V(x) = 0,\quad x\in\mathbb{R}.
\end{equation}
Let $\theta_0 = \int_0^1 f(x)dx$. Then, $\ol{H}(\theta_0) = 0$ and $f_{\theta_0} = f$ by \eqref{eq:atar} and \eqref{eq:burbez}, and
\begin{equation}\label{eq:parpul}
	I_{\theta_0}(1) = \int_0^1 G'(f(x))dx = 0
\end{equation}
by \eqref{eq:bured} and \eqref{eq:costco}.

Assume further that $G\in\Con^2(\mathbb{R})$ and satisfies \eqref{eq:resmi}. Again for the sake of heuristics, take $x=1$ in \eqref{eq:bured} and formally differentiate both sides of this equality with respect to $\theta$:
\[ (\pd_\theta I_{\theta})(1) = \int_0^1 G''(f_\theta(y))(\pd_\theta f_\theta)(y) dy. \]
Since $f = f_{\theta_0}$ spends $1 - 2\ell$ of its ``time" at $p_1$ or $p_2$ during each period, it should follow from \eqref{eq:resmi} that
\begin{equation}\label{eq:true}
	\left.(\pd_\theta I_{\theta})(1)\right|_{\theta = \theta_0} < 0\ \text{when $\ell$ is sufficiently small.}
\end{equation}

Now go back to \eqref{eq:iden} and formally differentiate both sides of it with respect to $\theta$:
\[ 	\ol{H}''(\theta)\!\!\int_0^1e^{I_\theta(x)}dx + \ol{H}'(\theta)\,\pd_\theta\!\!\int_0^1e^{I_\theta(x)}dx = (\partial^2_\theta f_\theta)(0)(e^{I_\theta(1)} - 1) + (\pd_\theta f_\theta)(0)\,(\pd_\theta I_\theta)(1)e^{I_\theta(1)}. \]
Recall from \eqref{eq:parpul} that $I_{\theta_0}(1) = 0$. It follows from \eqref{eq:iden} that $\ol{H}'(\theta_0) = 0$, and hence, the equality above simplifies to
\[ \ol{H}''(\theta_0)\!\!\int_0^1e^{I_{\theta_0}(x)}dx = \left.(\pd_\theta f_\theta)(0)\right|_{\theta = \theta_0}\left.(\pd_\theta I_\theta)(1)\right|_{\theta = \theta_0}. \]
Therefore, if \eqref{eq:true} is indeed true, then $\ol{H}''(\theta_0) < 0$ %(since $\pd_\theta f_\theta > 0$)
when $\ell$ is sufficiently small, and $\ol{H}$ is not quasiconvex on any open set containing $\theta_0$.

In the heuristic argument that we have given above, \eqref{eq:iden} and \eqref{eq:true} play key roles. We make the following observations about them:
\begin{itemize}
	\item [(i)] The identity \eqref{eq:iden} implies that $\ol{H}'(\theta)$ and $I_\theta(1)$ have the same sign. In particular,
	\begin{equation}\label{eq:heur}
		\begin{aligned}
			&\text{if $I_\theta(1)$ is strictly positive (resp.\ negative), then for sufficiently small $h>0$,}\\
			&\text{there is a $\theta^*\in(\theta,\theta + h)$ (resp.\ $\theta^*\in(\theta - h,\theta$)) such that $\ol{H}(\theta^*) > \ol{H}(\theta)$.}
		\end{aligned}
	\end{equation}
	\item [(ii)] If \eqref{eq:true} is true, then we recall \eqref{eq:parpul} and deduce that
	\begin{equation}\label{eq:ekmek}
		I_{\theta_0 - c}(1) > 0 > I_{\theta_0 + c}(1)\ \ \text{when $c>0$ is sufficiently small.}
	\end{equation}
\end{itemize}

In Sections \ref{sec:general} and \ref{sec:construct}, we will give a rigorous version of this heuristic argument by circumventing the question as to whether we can differentiate $\ol{H}(\theta)$, $f_\theta$ and $I_\theta(1)$ with respect to $\theta$. Specifically, instead of justifying \eqref{eq:iden} and \eqref{eq:true}, we will directly prove the weaker assertions \eqref{eq:heur} and \eqref{eq:ekmek}. Then, we will combine \eqref{eq:heur} (with $\theta = \theta_0 \pm c$) and \eqref{eq:ekmek} to deduce the existence of $\theta_1^*,\theta_2^*\in\mathbb{R}$ such that
\[ \theta_0-c < \theta_1^* < \theta_0 < \theta_2^* < \theta_0+c, \]
\[ \ol{H}(\theta_0-c) < \ol{H}(\theta_1^*)\quad\text{and}\quad\ol{H}(\theta_2^*) > \ol{H}(\theta_0+c), \]
and thereby conclude that $\ol{H}$ is not quasiconvex on $[\theta_0 - c, \theta_0 + c]$. This will give Theorem \ref{thm:hicaz} under assumption \eqref{eq:resmi}. Examining the details of the construction and the argument, we will prove Theorem \ref{thm:hicaz} under the more general assumption \eqref{eq:Gduz}.

\subsection{Higher dimensions}\label{ss:higher}

To extend our counterexamples to any dimension $d\ge 2$, we observe that, if $v_1$ solves
\begin{align}
	\pd_t v&=\pd^2_{xx} v+G_1(\pd_{x} v)+V_1(x),\quad (t,x)\in(0,\infty)\times\mathbb{R},\label{eq:md1}\\
	v&(0,x)=g_1(x),\quad x\in\mathbb{R},\nonumber
	\shortintertext
	{and $\breve{v}_i$ solves}
	\pd_t \breve{v}&=\pd^2_{xx} \breve{v}+\breve{G}(\pd_x \breve{v})+\breve{V}(x),\quad (t,x)\in(0,\infty)\times\mathbb{R},\label{eq:md2}\\
	\breve{v}&(0,x)=g_i(x),\quad x\in\mathbb{R},\nonumber
\end{align}
for each $i=2,3,\ldots,d$, then $u(t,x) := v_1(t,x_1)+\sum_{i=2}^d\breve{v}_i(t,x_i)$ solves 
\begin{align}
	\pd_tu& = \Delta_xu + G_1(\pd_{x_1}u) + \sum_{i=2}^d\breve{G}(\pd_{x_i}u) + V_1(x_1) + \sum_{i=2}^d \breve{V}(x_i),\quad(t,x)\in(0,\infty)\times\mathbb{R}^d,\label{eq:mdeq}\\
	u&(0,x) = \sum_{i=1}^dg_i(x_i),\quad x\in\mathbb{R}^d.\label{eq:mdinit}
\end{align}

We assume that the potentials $V_1,\breve{V}\in \Lip(\mathbb{R})$ are $1$-periodic, $G_1$ is quasiconvex, $\breve{G}$ is convex, $G_1,\breve{G}\in \Con^2(\mathbb{R})\cap\mathcal{G}_1$, and
all $g_i\in\UC(\mathbb{R})$. Under these assumptions, the solution $u$ of the problem \eqref{eq:mdeq}--\eqref{eq:mdinit} is unique.

Equations \eqref{eq:md1} and \eqref{eq:md2} homogenize. If we denote the corresponding effective Hamiltonians by $\ol{H}_1$ and $\ol{\mathcal{H}}$, respectively, then, for each $\theta\in\mathbb{R}$, there are $1$-periodic $f_{\theta},\breve{f}_{\theta}\in\Con^1(\mathbb{R})$ as in \eqref{eq:atar} such that
\begin{equation}\label{eq:mado}
	\int_0^1f_\theta(x)dx = \int_0^1\breve{f}_\theta(x)dx = \theta,
\end{equation}
\begin{equation}\label{eq:ams} 
	f_\theta'(x) + G_1(f_\theta(x)) + V_1(x) = \ol{H}_1(\theta)\quad\text{and}\quad \breve{f}_\theta'(x) + \breve{G}(\breve{f}_\theta(x)) + \breve{V}(x) = \ol{{\mathcal H}}(\theta)
\end{equation}
for all $x\in\mathbb{R}$. It follows that
\begin{equation}
	\label{eq:vees}
	v_{\theta}(t,x) := t\ol{H}_1(\theta)+\int_0^xf_{\theta}(y)\,dy\ \ \text{and}\ \  \breve{v}_{\theta}(t,x) := t\ol{{\mathcal H}}(\theta)+\int_0^x\breve{f}_{\theta}(y)\,dy
\end{equation}
are solutions of \eqref{eq:md1} and \eqref{eq:md2}, respectively, \eqref{eq:mdeq} homogenizes, and the corresponding effective Hamiltonian is given by
\begin{equation}\label{eq:effHsum}
	\ol{H}_1(\theta_1)+\sum_{i=2}^d\ol{\mathcal H}(\theta_i),\quad \theta\in\mathbb{R}^d.
\end{equation}

Note that, if $\ol{H}_1$ is not quasiconvex (on $\mathbb{R}$), then the sum in \eqref{eq:effHsum} is not quasiconvex (on $\mathbb{R}^d$). Therefore, to prove Theorem \ref{thm:multi}, it would
be sufficient to take a quasiconvex $G_1$ and a $1$-periodic $V_1\in \Lip(\mathbb{R})$ as in Theorem \ref{thm:hicaz} such that the corresponding $\ol{H}_1$ is not quasiconvex, and construct a convex $\breve{G}$ for which
\[ G(p):=G_1(p_1)+\sum_{i=2}^d\breve{G}(p_i) \]
is quasiconvex (on $\mathbb{R}^d$). However, the last step cannot be achieved on the whole $\mathbb{R}^d$. 

In Section \ref{sec:multi}, we will carry out the construction of $G$ as above on a bounded convex set $S$ such that all of the ``action'' needed for our result happens inside $S$. Then, we will extend $G$ from $S$ to $\mathbb{R}^d$ in a quasiconvex way (without preserving the separation of variables outside of $S$), and thereby establish Theorem \ref{thm:multi}.

\section{Analysis of correctors}\label{sec:general}

In this section, we assume that $G\in\Con^1(\mathbb{R})$ is coercive and $V\in\Lip(\mathbb{R})$ is $1$-periodic. Then, for each $\theta\in\mathbb{R}$, there exists a unique $\ol{H}(\theta)\in\mathbb{R}$ and a $1$-periodic $F_\theta\in\Con^2(\mathbb{R})$, referred to as the {\em corrector}, such that \eqref{eq:biz} holds for all $x\in\mathbb{R}$.
This follows from \cite[Lemma 4.1]{Evans92} and some additional arguments, see Proposition \ref{prop:tax} in Appendix \ref{app:1d}. We start by recording this result in an equivalent way.

\begin{lemma}\label{lem:equi}
	For each $\theta\in\mathbb{R}$, there exists a unique $\ol{H}(\theta)\in\mathbb{R}$ and a unique $1$-periodic $f_\theta\in\Con^1(\mathbb{R})$ such that
\begin{equation}\label{eq:sifir}
	\int_0^1f_\theta(x)dx = \theta,
\end{equation}
and
\begin{equation}\label{eq:ODE}
	f_\theta'(x) + G(f_\theta(x)) + V(x) = \ol{H}(\theta)\quad\text{for all}\ x\in\mathbb{R}.
\end{equation}
\end{lemma}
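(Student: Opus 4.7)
The plan is to deduce Lemma \ref{lem:equi} directly from Proposition \ref{prop:tax} in Appendix \ref{app:1d}, which already furnishes, for each $\theta\in\mathbb{R}$, a unique ergodic constant $\ol{H}(\theta)$ together with a $1$-periodic $F_\theta\in\Con^2(\mathbb{R})$ satisfying \eqref{eq:biz}. The bridge between the two formulations is the substitution $f_\theta(x) := \theta + F_\theta'(x)$. For existence, I would first note that $f_\theta\in\Con^1(\mathbb{R})$ is $1$-periodic because $F_\theta$ is, and that
$$\int_0^1 f_\theta(x)\,dx \;=\; \theta + F_\theta(1) - F_\theta(0) \;=\; \theta,$$
so \eqref{eq:sifir} holds. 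Substituting $F_\theta'' = f_\theta'$ and $\theta + F_\theta' = f_\theta$ into \eqref{eq:biz} gives \eqref{eq:ODE} immediately.

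For uniqueness, suppose $(f_1,\lambda_1)$ and $(f_2,\lambda_2)$ both satisfy \eqref{eq:sifir}--\eqref{eq:ODE}. I would set $F_i(x) := \int_0^x\bigl(f_i(y)-\theta\bigr)\,dy$; by \eqref{eq:sifir} each $F_i$ is $1$-periodic, and by \eqref{eq:ODE} it satisfies $F_i'' + G(\theta + F_i') + V = \lambda_i$. The uniqueness of the ergodic constant in Proposition \ref{prop:tax} then forces $\lambda_1 = \lambda_2 = \ol{H}(\theta)$, so it only remains to show $f_1 \equiv f_2$, equivalently, that $W := F_1 - F_2$ is constant. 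Subtracting the two corrector equations and writing $G(\theta + F_1') - G(\theta + F_2') = c(x)\,W'(x)$ with
$$c(x) \;:=\; \int_0^1 G'\bigl(\theta + F_2'(x) + s\,W'(x)\bigr)\,ds \;\in\; \Con(\mathbb{R}),$$
reduces the problem to the first-order linear ODE $W''(x) + c(x)\,W'(x) = 0$, whose solutions are $W'(x) = W'(0)\exp\bigl(-\int_0^x c(y)\,dy\bigr)$. The $1$-periodicity of $W$ requires $\int_0^1 W'(x)\,dx = 0$, and since the exponential weight is strictly positive, this forces $W'(0) = 0$. Hence $W' \equiv 0$, so $f_1 \equiv f_2$.

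Since all of the analytic substance -- existence of a classical corrector and uniqueness of the ergodic constant -- is absorbed into Proposition \ref{prop:tax}, I do not anticipate any real obstacle: the lemma amounts to a change of unknowns from $F_\theta$ to $f_\theta = \theta + F_\theta'$, together with the short linear-ODE argument for $f_\theta$-uniqueness. The only mild care needed is to observe that $c(x)$ is well-defined and continuous, which follows from $G\in\Con^1(\mathbb{R})$ and $F_i\in\Con^2(\mathbb{R})$.
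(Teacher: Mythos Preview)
Your proof is correct. The existence step and the uniqueness of the ergodic constant are handled exactly as in the paper, via the substitution $f_\theta=\theta+F_\theta'$ and Proposition~\ref{prop:tax}.

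The only place you diverge is in the uniqueness of $f_\theta$. The paper argues more qualitatively: since $G\in\Con^1(\mathbb{R})$, the right-hand side of the first-order ODE $f'=\ol{H}(\theta)-V(x)-G(f)$ is locally Lipschitz in $f$, so two distinct $1$-periodic solutions can never cross and are therefore strictly ordered; they then cannot both have mean $\theta$. Your route linearizes instead, writing $W''+c(x)W'=0$ and using the explicit integrating-factor solution to see that $W'=f_1-f_2$ has a fixed sign, then killing it with the mean-zero condition. The two arguments are really the same observation (ordering of solutions) in different dress: yours is more computational and self-contained, while the paper's is a one-line appeal to Picard--Lindel\"of uniqueness. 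Either way the step is short; your version has the minor advantage that the integrating-factor structure you exhibit is exactly the one the paper exploits later in \eqref{eq:solucan}--\eqref{eq:intfac}.
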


\begin{proof}
	Define $f_\theta\in\Con^1(\mathbb{R})$ by setting $f_\theta(x) = \theta + F_\theta'(x)$ for all $x\in\mathbb{R}$. Note that $f_\theta$ is $1$-periodic and it satisfies \eqref{eq:sifir}--\eqref{eq:ODE}.
	
	It remains to prove the uniqueness of $\ol{H}(\theta)$ and $f_\theta$. Take any $\ol{H}(\theta)\in\mathbb{R}$ and $1$-periodic $f_\theta\in\Con^1(\mathbb{R})$ such that \eqref{eq:sifir}--\eqref{eq:ODE} hold. Define $F_\theta\in\Con^2(\mathbb{R})$ by setting $F_\theta(x) = \int_0^xf_\theta(y)dy - \theta x$ for all $x\in\mathbb{R}$. Note that $F_\theta$ is $1$-periodic and it satisfies \eqref{eq:biz}. Therefore, $\ol{H}(\theta)$ is unique by Proposition \ref{prop:tax}. Finally, if there is another $1$-periodic $f\in\Con^1(\mathbb{R})$ (distinct from $f_\theta$) that solves \eqref{eq:ODE}, then $f$ and $f_\theta$ are strictly ordered. Therefore, \eqref{eq:sifir} cannot hold for $f$.
\end{proof}

Consider the equation
\begin{equation}\label{eq:derODE}
	g_\theta'(x) + G'(f_\theta(x))g_\theta(x) = c(\theta),\quad x\in\mathbb{R},
\end{equation}
with any $c(\theta)\in\mathbb{R}$.\footnote{\label{foot:circum} Equation \eqref{eq:derODE} is obtained by formally differentiating both sides of \eqref{eq:ODE} with respect to $\theta$, but we will circumvent the question as to whether $\ol{H}'(\theta)$ and $\pd_\theta f_\theta$ exist, see Section \ref{ss:heur1d}.}
The general solution of this equation is given by
\begin{align}
	g_\theta(x) &= \left(c(\theta)B_\theta(x) + C(\theta)\right)e^{-I_\theta(x)},\label{eq:solucan}\\
	\shortintertext{where}
	I_\theta(x) &= \int_0^x G'(f_\theta(y))dy,\quad B_\theta(x) = \int_0^xe^{I_\theta(y)}dy,\label{eq:intfac}
\end{align}
and $C(\theta) \in \mathbb{R}$ is an arbitrary constant.

We are interested in $1$-periodic positive solutions $g_\theta\in\Con^1(\mathbb{R})$ to equation \eqref{eq:derODE}. This will limit the set of values $c(\theta)$ can take.
Since $f_\theta$ is $1$-periodic, we see from \eqref{eq:derODE} that $g_\theta$ is $1$-periodic if and only if $g_\theta(1) = g_\theta(0)$, i.e.,
\[ \left(c(\theta)B_\theta(1) + C(\theta)\right)e^{-I_\theta(1)} = C(\theta) \]
by \eqref{eq:solucan}--\eqref{eq:intfac}, which can be rearranged as
\begin{equation}\label{eq:sart}
	C(\theta)(e^{I_\theta(1)} - 1) = c(\theta)B_\theta(1).
\end{equation}
Note that $C(\theta) = g_\theta(0) > 0$ (since we want $g_\theta$ to be positive) and $B_\theta(1) = \int_0^1e^{I_\theta(x)}dx > 0$.

From the identity in \eqref{eq:sart}, we infer the set of values $c(\theta)$ can take. There are three cases:
\begin{itemize}
	\item [(i)] If $I_\theta(1) > 0$, then $c(\theta) > 0$ and $C(\theta) = \displaystyle{\frac{c(\theta)B_\theta(1)}{e^{I_\theta(1)} - 1}} > 0$.
	\item [(ii)] If $I_\theta(1) < 0$, then $c(\theta) < 0$ and $C(\theta) = \displaystyle{\frac{c(\theta)B_\theta(1)}{e^{I_\theta(1)} - 1}} > 0$.
	\item [(iii)] If $I_\theta(1) = 0$, then $c(\theta) = 0$ and $C(\theta) > 0$ is arbitrary (which is clear because taking $c(\theta) = 0$ turns \eqref{eq:derODE} into an homogeneous equation). %We will not use this case in our analysis.
\end{itemize}
In each of these three cases, we have found a $1$-periodic positive $g_\theta\in\Con^1(\mathbb{R})$ that solves \eqref{eq:derODE}. Let
\begin{equation}\label{eq:pb}
	b(\theta) = \int_0^1 g_\theta(x) dx > 0.
\end{equation}

For any $h\in\mathbb{R}$, define
\[ f_\theta^h(x) = f_\theta(x) + hg_\theta(x). \]
Note that $f_\theta^h \in\Con^1(\mathbb{R})$, it is $1$-periodic, and
\begin{equation}\label{eq:peas}
	\begin{split}
		(f_\theta^h)'(x) + G(f_\theta^h(x)) + V(x) &= (f_\theta(x) + hg_\theta(x))' + G(f_\theta(x) + h g_\theta(x)) + V(x)\\
		&= f_\theta'(x) + hg_\theta'(x) + G(f_\theta(x)) + hG'(f_\theta(x))g_\theta(x) + o(h) + V(x)\\
		&= f_\theta'(x) + G(f_\theta(x)) + V(x) + h[ g_\theta'(x) + G'(f_\theta(x))g_\theta(x) ] + o(h)\\
		&= \ol{H}(\theta) + c(\theta)h + o(h)\quad \text{for all}\ x\in\mathbb{R},
	\end{split}
\end{equation}
by \eqref{eq:ODE}--\eqref{eq:derODE}. We will now use \eqref{eq:peas} to deduce information about $\ol{H}$ near $\theta$.

\smallskip

\textbf{Case 1: $I_\theta(1) > 0$.} Take $c(\theta) = 1$. For every $a\in(0,1)$, we see from \eqref{eq:peas} that
$f_\theta^h(x)$ is a strict supersolution\footnote{In this first-order ODE context, we say that an $f\in\Con^1(\mathbb{R})$ is a strict subsolution (resp.\ supersolution) of \eqref{eq:kader} if it satisfies \eqref{eq:kader} when the ``$=$" sign there is replaced by the ``$<$" (resp. ``$>$") sign.} to
\begin{equation}\label{eq:kader}
	f'(x) + G(f(x)) + V(x) = \ol{H}(\theta) + ah,\quad x\in\mathbb{R},
\end{equation}
when $h>0$ is sufficiently small. Since $f_\theta(x)$ is a strict subsolution to \eqref{eq:kader} when $h>0$, the set
\[ \mathcal{S}_\theta^{a,h} = \{f\in\Con^1(\mathbb{R}):\,\text{$f$ solves \eqref{eq:kader} and satisfies $f_\theta(x) < f(x) < f_\theta^h(x)$ for all $x\in\mathbb{R}$}\} \]
is nonempty by \cite[Lemma A.8]{DKY23+}, and it is a compact subset of $\Con(\mathbb{R})$ under the topology of locally uniform convergence. Let $f_\theta^{a,h}(x) = \sup\{f(x):\,f\in\mathcal{S}_\theta^{a,h}\}$. Then, $f_\theta^{a,h}\in\mathcal{S}_\theta^{a,h}$ by \cite[Lemma A.9]{DKY23+}. Moreover, since $f_\theta$, $f_\theta^h$ and $V$ are $1$-periodic, we know that, for every $f\in\Con^1(\mathbb{R})$, $f\in\mathcal{S}_\theta^{a,h}$ if and only if $f(\,\cdot\,\pm 1)\in\mathcal{S}_\theta^{a,h}$. Therefore,
\[ f_\theta^{a,h}(x+1) = \sup\{f(x+1):\,f\in\mathcal{S}_\theta^{a,h}\} = \sup\{f(x):\,f\in\mathcal{S}_\theta^{a,h}\} = f_\theta^{a,h}(x) \]
for all $x\in\mathbb{R}$, i.e., $f_\theta^{a,h}$ is $1$-periodic. Let
\[ \theta^*(\theta,a,h) = \int_0^1f_\theta^{a,h}(x)dx. \]
Then,
\[ \ol{H}(\theta^*(\theta,a,h)) = \ol{H}(\theta) + ah \]
by Lemma \ref{lem:equi}. Finally, note that
\[ \theta < \theta^*(\theta,a,h) < \theta + hb(\theta) \]
by \eqref{eq:sifir} and \eqref{eq:pb}.

\smallskip

\textbf{Case 2: $I_\theta(1) < 0$.} Take $c(\theta) = -1$. For every $a\in(-1,0)$, we see from \eqref{eq:peas} that $f_\theta^h(x)$ is a strict supersolution to \eqref{eq:kader} when $h<0$ and $|h|$ is sufficiently small. Since $f_\theta(x)$ is a strict subsolution to \eqref{eq:kader} when $h<0$, by an argument analogous to the one we gave in Case 1, there exists a $1$-periodic $f_\theta^{a,h}\in\Con^1(\mathbb{R})$ that solves \eqref{eq:kader} and satisfies $f_\theta^h(x) < f_\theta^{a,h}(x) < f_\theta(x)$ for all $x\in\mathbb{R}$. Let
\[ \theta^*(\theta,a,h) = \int_0^1f_\theta^{a,h}(x)dx \]
as in Case 1. Then,
\[ \ol{H}(\theta^*(\theta,a,h)) = \ol{H}(\theta) + ah \]
by Lemma \ref{lem:equi}. Finally, note that
\[ \theta + hb(\theta) < \theta^*(\theta,a,h) < \theta \]
by \eqref{eq:sifir} and \eqref{eq:pb}.

\smallskip

We recapitulate these results below with the choices $a = 1/2$ and $a = -1/2$ in Cases 1 and 2, respectively.

\begin{lemma}\label{lem:yous}
	Let $I_\theta(1) = \int_0^1 G'(f_\theta(x))dx$.
	\begin{itemize}
		\item [(a)] If $I_\theta(1) > 0$, then, for sufficiently small $h>0$, there is a $\theta^*\in(\theta,\theta + hb(\theta))$ such that
		\[ \ol{H}(\theta^*) = \ol{H}(\theta) + h/2 > \ol{H}(\theta). \]
		\item [(b)] If $I_\theta(1) < 0$, then, for sufficiently small $h>0$, there is a $\theta^*\in(\theta - hb(\theta),\theta)$ such that
		\[ \ol{H}(\theta^*) = \ol{H}(\theta) + h/2 > \ol{H}(\theta). \]
	\end{itemize}
\end{lemma}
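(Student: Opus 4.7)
The plan is to repackage the analysis already laid out just before the lemma statement into a clean proof, making two observations precise: the uniformity of the $o(h)$ error in \eqref{eq:peas} and the extraction of a $1$-periodic solution between a sub- and a super-solution.

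For part (a), I would fix $c(\theta) = 1$ in the general formula \eqref{eq:solucan}, setting $C(\theta) = B_\theta(1)/(e^{I_\theta(1)} - 1) > 0$ (which is positive precisely because $I_\theta(1) > 0$), so that $g_\theta \in \Con^1(\mathbb{R})$ is $1$-periodic, strictly positive, and solves \eqref{eq:derODE} with $c(\theta) = 1$. Define $f_\theta^h = f_\theta + h g_\theta$. Since $f_\theta$ and $g_\theta$ are $1$-periodic, they range over compact subsets of $\mathbb{R}$; combined with $G \in \Con^1(\mathbb{R})$, a first-order Taylor expansion of $G(f_\theta + h g_\theta)$ around $f_\theta$ yields a remainder of the form $h \cdot \eta(h)$ with $\eta(h) \to 0$ uniformly in $x$ as $h \to 0$. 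Substituting into \eqref{eq:peas} gives
\[ (f_\theta^h)'(x) + G(f_\theta^h(x)) + V(x) = \ol{H}(\theta) + h + h\eta(h), \]
so for all sufficiently small $h > 0$ the left-hand side exceeds $\ol{H}(\theta) + h/2$ uniformly in $x$. Thus $f_\theta^h$ is a strict supersolution, and $f_\theta$ is trivially a strict subsolution, of \eqref{eq:kader} with $a = 1/2$.

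Next I would invoke \cite[Lemma A.8]{DKY23+} to guarantee that the set $\mathcal{S}_\theta^{1/2,h}$ of solutions trapped between $f_\theta$ and $f_\theta^h$ is nonempty, and \cite[Lemma A.9]{DKY23+} to conclude that its pointwise supremum $f_\theta^{1/2,h}$ also belongs to $\mathcal{S}_\theta^{1/2,h}$. Periodicity of this supremum follows from the invariance of $\mathcal{S}_\theta^{1/2,h}$ under integer translations (since $f_\theta, f_\theta^h, V$ are all $1$-periodic), which forces $f_\theta^{1/2,h}(\,\cdot\,+1) = f_\theta^{1/2,h}(\,\cdot\,)$. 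Applying Lemma \ref{lem:equi} to $f_\theta^{1/2,h}$ identifies $\theta^* := \int_0^1 f_\theta^{1/2,h}(x)\,dx$ as the unique point at which $\ol{H}(\theta^*) = \ol{H}(\theta) + h/2$. The bound $\theta < \theta^* < \theta + h b(\theta)$ is immediate from integrating the strict pointwise inequalities $f_\theta < f_\theta^{1/2,h} < f_\theta^h$ over $[0,1]$ together with the definitions \eqref{eq:sifir} and \eqref{eq:pb}.

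Part (b) is proved by the symmetric construction: take $c(\theta) = -1$, which yields a $1$-periodic positive $g_\theta$ via the sign analysis in case (ii) of the discussion preceding the lemma, and consider $f_\theta^{-h} = f_\theta - h g_\theta$ for small $h > 0$. The same Taylor argument shows that $f_\theta^{-h}$ is a strict supersolution of \eqref{eq:kader} with $a = -1/2$ while $f_\theta$ is a strict subsolution, and the sub/super-solution squeeze now produces $f_\theta^{-h} < f_\theta^{-1/2,-h} < f_\theta$, giving $\theta - h b(\theta) < \theta^* < \theta$. The main technical point in either case is the uniform-in-$x$ control of the Taylor remainder, which is guaranteed by periodicity of $f_\theta, g_\theta$ and the continuity of $G'$ on a compact interval; no other nontrivial obstacle arises.
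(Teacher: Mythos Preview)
Your proposal is correct and follows essentially the same route as the paper's own argument: choose $c(\theta)=\pm 1$ according to the sign of $I_\theta(1)$ to produce a positive $1$-periodic $g_\theta$, use \eqref{eq:peas} to exhibit $f_\theta$ and $f_\theta^{\pm h}$ as a strict sub/super-solution pair for \eqref{eq:kader} with $a=\pm 1/2$, invoke \cite[Lemmas~A.8--A.9]{DKY23+} together with translation invariance to extract a $1$-periodic solution trapped between them, and read off $\theta^*$ and $\ol{H}(\theta^*)$ via Lemma~\ref{lem:equi}. Your explicit remark that the $o(h)$ in \eqref{eq:peas} is uniform in $x$ (by periodicity of $f_\theta,g_\theta$ and uniform continuity of $G'$ on a compact interval) makes precise a point the paper leaves implicit.
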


Next, we make an elementary observation.

\begin{lemma}\label{lem:order}
	For every $\theta_1,\theta_2\in\mathbb{R}$, if $\theta_1 < \theta_2$, then $f_{\theta_1}(x) < f_{\theta_2}(x)$ for all $x\in\mathbb{R}$.
\end{lemma}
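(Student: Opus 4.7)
The plan is to argue by contradiction: if the two correctors coincide at some point, then first-order ODE uniqueness combined with $1$-periodicity will clash with the integral constraint $\int_0^1 f_{\theta_i}(x)\,dx=\theta_i$ from \eqref{eq:sifir}. Set $w(x) := f_{\theta_2}(x) - f_{\theta_1}(x)$ and $\Delta := \ol{H}(\theta_2) - \ol{H}(\theta_1)$. Then $w\in\Con^1(\mathbb{R})$ is $1$-periodic and
\[ \int_0^1 w(x)\,dx = \theta_2 - \theta_1 > 0 \]
by \eqref{eq:sifir}.

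The first key step is the observation that at any putative zero $x_0$ of $w$ (i.e.\ any $x_0$ with $f_{\theta_1}(x_0) = f_{\theta_2}(x_0)$), subtracting the two instances of \eqref{eq:ODE} at $x_0$ gives
\[ w'(x_0) = \Delta + G(f_{\theta_1}(x_0)) - G(f_{\theta_2}(x_0)) = \Delta, \]
a value independent of $x_0$. This rigidity, combined with $1$-periodicity, is what will force the contradiction.

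Next I would split on the sign of $\Delta$. If $\Delta = 0$, then $f_{\theta_1}$ and $f_{\theta_2}$ satisfy the same first-order ODE \eqref{eq:ODE} and agree at $x_0$, so by uniqueness they coincide throughout $\mathbb{R}$, contradicting $\int_0^1 w > 0$. If $\Delta > 0$ (the case $\Delta < 0$ is symmetric via $w \mapsto -w$), then at the hypothetical zero $x_0$ we have $w'(x_0) = \Delta > 0$, so $w > 0$ on some right-neighborhood $(x_0, x_0 + \delta)$. By $1$-periodicity, $w(x_0+1) = 0$, so the set $\{x > x_0 : w(x) = 0\}$ is non-empty and closed; its infimum $x_1$ lies in $(x_0, x_0 + 1]$, with $w > 0$ on $(x_0, x_1)$ and $w(x_1) = 0$. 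This forces $w'(x_1) \le 0$, contradicting $w'(x_1) = \Delta > 0$.

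Therefore $w$ has no zeros, so by continuity it has constant sign; its positive integral then yields $w > 0$ everywhere, i.e.\ $f_{\theta_1}(x) < f_{\theta_2}(x)$ for all $x\in\mathbb{R}$. The only mild obstacle is the case analysis on $\Delta$, but the constancy of $w'$ at zeros of $w$, imposed directly by the ODE, makes each case routine; no input beyond the corrector equation \eqref{eq:ODE}, $\Con^1$ regularity of $G$, and $1$-periodicity is needed.
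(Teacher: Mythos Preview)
Your proof is correct and follows essentially the same approach as the paper: both argue by contradiction, split into cases according to the sign of $\Delta=\ol{H}(\theta_2)-\ol{H}(\theta_1)$, and exploit the fact that $w'=\Delta$ at every zero of $w$, together with $1$-periodicity, to rule out any crossing. The only cosmetic difference is that the paper starts from a point where $f_{\theta_1}<f_{\theta_2}$ and looks forward (case $\Delta>0$) or backward (case $\Delta<0$) for the first crossing, whereas you start from a hypothetical zero and use periodicity to locate the next zero to the right in both nonzero cases; this makes your write-up slightly more uniform but is the same argument.
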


\begin{proof}
	See Appendix \ref{app:lemmas}.
\end{proof}

For every $\theta_1,\theta_2\in\mathbb{R}$ such that $\theta_1 < \theta_2$, let $g_{\theta_1,\theta_2} = f_{\theta_2} - f_{\theta_1}$. Note that $g_{\theta_1,\theta_2}\in\Con^1(\mathbb{R})$ and it is $1$-periodic. Moreover, $g_{\theta_1,\theta_2}(x) > 0$ for all $x\in\mathbb{R}$ by Lemma \ref{lem:order}. In the final result of this section, we provide lower and upper bounds for $(\theta_2 - \theta_1)^{-1}g_{\theta_1,\theta_2}$.\footnote{Since we circumvent the question as to whether $\pd_\theta f_\theta$ exists (see Section \ref{ss:heur1d} and Footnote \ref{foot:circum}), we instead work with the difference quotient $\frac{f_{\theta_2} - f_{\theta_1}}{\theta_2 - \theta_1}$.}

\begin{lemma}\label{lem:bounds}
	For every $\theta_1,\theta_2\in\mathbb{R}$, if $\theta_1 < \theta_2$, then
	\begin{equation}\label{eq:bounds}
		(\theta_2 - \theta_1)e^{-K_1(\theta_1,\theta_2)} \le g_{\theta_1,\theta_2}(x) \le (\theta_2 - \theta_1)e^{K_1(\theta_1,\theta_2)}
	\end{equation}
	for all $x\in\mathbb{R}$, where
	\[ K_1(\theta_1,\theta_2) = \max\left\{ |G'(p)|:\,\min_{x\in[0,1]}f_{\theta_1}(x) \le p \le \max_{x\in[0,1]}f_{\theta_2}(x) \right\}. \]	
\end{lemma}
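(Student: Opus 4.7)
My plan is to derive a first-order linear ODE for $g := g_{\theta_1,\theta_2}$, apply the integrating factor technique centered at a point of minimum, and combine with the periodicity constraint and the integral mean $\bar g = \theta_2 - \theta_1$ to bound the oscillation ratio $\max g / \min g$ by $e^{K_1}$. Subtracting the corrector equations \eqref{eq:ODE} for $\theta_1$ and $\theta_2$ and using the integral form of the mean value theorem gives
\[ g'(x) + \t{G}(x)\,g(x) = \lambda, \quad \text{where } \lambda := \ol{H}(\theta_2) - \ol{H}(\theta_1), \]
and $\t{G}(x) := \int_0^1 G'\bigl(f_{\theta_1}(x) + s\,g(x)\bigr)\,ds$ is continuous, $1$-periodic, and satisfies $|\t{G}(x)| \le K_1$, since at every $x$ the argument of $G'$ lies in $[f_{\theta_1}(x), f_{\theta_2}(x)]$, which is contained in the range defining $K_1$.

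Let $m = g(x_m)$ and $M = g(x_M)$ denote the minimum and maximum of $g$ over a period; if $M = m$ the lemma is immediate, so assume $M > m$ and, by $1$-periodicity, take $x_m \le x_M < x_m+1$. The integrating factor $e^{J(x)}$ with $J(x) := \int_{x_m}^x \t{G}(y)\,dy$ yields $(g e^J)' = \lambda e^J$, hence
\[ g(x)\,e^{J(x)} = m + \lambda \int_{x_m}^x e^{J(y)}\,dy. \]
Setting $x = x_m+1$ and using periodicity gives $m\bigl(e^{J_\star} - 1\bigr) = \lambda B$, where $J_\star := J(x_m+1) = \int_0^1\t{G}$ and $B := \int_{x_m}^{x_m+1} e^{J(y)}\,dy > 0$. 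Setting $x = x_M$ gives $Me^{I_1} = m + \lambda B_1$, where $I_1 := J(x_M)$ and $B_1 := \int_{x_m}^{x_M} e^{J(y)}\,dy$. Eliminating $\lambda$ via $\beta := B_1/B \in [0,1]$ yields the closed-form expression
\[ \frac{M}{m} = e^{-I_1}\bigl[(1-\beta) + \beta e^{J_\star}\bigr], \]
in which the bracket is a convex combination of $1$ and $e^{J_\star}$.

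The main obstacle is to squeeze the exponent down to $K_1$ rather than the naive $2K_1$ one gets by bounding $|I_1|$ and $|J_\star|$ separately. The key observation is that, regardless of the sign of $J_\star$, the bracket is bounded above by $\max(1, e^{J_\star})$, so $M/m \le \max\bigl(e^{-I_1},\, e^{J_\star - I_1}\bigr)$; and crucially, both exponents here are integrals of $\t{G}$ over subintervals of $[x_m, x_m+1]$ of length at most $1$, namely $-I_1 = -\int_{x_m}^{x_M}\t{G}$ and $J_\star - I_1 = \int_{x_M}^{x_m+1}\t{G}$, so each has absolute value at most $K_1$. Therefore $M/m \le e^{K_1}$. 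Since $\bar g = \int_0^1 g\,dx = \theta_2 - \theta_1$ satisfies $m \le \bar g \le M$, we obtain $M/\bar g \le e^{K_1}$ and $\bar g/m \le e^{K_1}$, which rearranges into \eqref{eq:bounds} for every $x \in \mathbb{R}$ by the $1$-periodicity of $g$.
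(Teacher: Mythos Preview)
Your proof is correct, but it takes a genuinely different route from the paper's. The paper splits into two cases according to the sign of $\lambda = \ol{H}(\theta_2) - \ol{H}(\theta_1)$: if $\lambda \le 0$ then from \eqref{eq:diffODE} one gets the one-sided differential inequality $g' \le K_1 g$, and if $\lambda > 0$ one gets $g' \ge -K_1 g$. In either case a direct Gr\"onwall-type integration over an interval of length at most $1$ (using periodicity to handle wrap-around) yields $g(x_2)/g(x_1) \le e^{K_1}$ for all $x_1,x_2$, and then the mean value theorem for integrals supplies a point where $g = \theta_2-\theta_1$.

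Your approach instead solves the linear ODE exactly via the integrating factor, obtains the closed-form expression $M/m = e^{-I_1}[(1-\beta)+\beta e^{J_\star}]$, and then makes the nice observation that the two candidate exponents $-I_1$ and $J_\star - I_1$ are integrals of $\t G$ over \emph{complementary} subintervals of a single period, each therefore bounded in absolute value by $K_1$. This avoids the case split on the sign of $\lambda$ altogether and gives a somewhat more structural explanation for why the constant is $K_1$ rather than $2K_1$. The paper's argument is shorter and more elementary; yours trades the case analysis for a bit more algebra but produces an explicit formula for $M/m$ along the way. The final step---sandwiching $\bar g = \theta_2 - \theta_1$ between $m$ and $M$---is the same in both.
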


\begin{proof}
	See Appendix \ref{app:lemmas}.
\end{proof}

\section{The construction in one dimension}\label{sec:construct}

Take any coercive $G\in\Con^2(\mathbb{R})$ and $p_1,p_2\in\mathbb{R}$ such that $p_1 < p_2$ and $G'(p_1) < 0 < G'(p_2)$. Let
\[ m_1 = \min\{G'(p):\,p\in[p_1,p_2]\},\quad M_1 = \max\{G'(p):\,p\in[p_1,p_2]\}, \]
and
\begin{equation}\label{eq:liman}
	L = \frac{G'(p_2)}{G'(p_2) - G'(p_1)} \in (0,1).
\end{equation}
Note that $G'(p_1)L + G'(p_2)(1-L) = 0$, and for any $\ell \in (0,L\wedge(1-L))$ (to be chosen later),
\begin{equation}\label{eq:amb}
\begin{aligned}
	&G'(p_1)(L-\ell) + G'(p_2)(1-L-\ell) + 2\ell m_1 < 0\\
	<\, &G'(p_1)(L-\ell) + G'(p_2)(1-L-\ell) + 2\ell M_1.
\end{aligned}
\end{equation}
Define
\begin{align*}
	\mathcal{C}(L,\ell) &= \left\{f\in\Con^{1,1}(\mathbb{R}):\, \text{$f$ is $1$-periodic},\ p_1 \le f(x) \le p_2\ \text{for all}\ x\in\mathbb{R},\right.\\
													  &\hspace{29mm} \left. f(x) = p_1\ \text{for}\ x\in[0,L - \ell],\ \text{and}\ f(x) = p_2\ \text{for}\ x\in[L,1-\ell]\right\},
\end{align*}
where $\Con^{1,1}(\mathbb{R})$ denotes the set of all $f\in\Con^1(\mathbb{R})$ such that $f'\in\Lip(\mathbb{R})$.

\begin{lemma}\label{lem:var}
	For any $\ell \in (0,L\wedge(1-L))$, there is an $f\in \mathcal{C}(L,\ell)$ such that $\int_0^1 G'(f(x))dx = 0$.
\end{lemma}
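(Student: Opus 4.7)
The plan is to use the intermediate value theorem applied to a convex interpolation within $\mathcal{C}(L,\ell)$ between two functions that drive the integral to opposite signs. The starting observation is that for any $f\in\mathcal{C}(L,\ell)$, the prescribed values on $[0,L-\ell]$ and $[L,1-\ell]$ yield the decomposition
\[
J(f):=\int_0^1 G'(f(x))\,dx = G'(p_1)(L-\ell)+G'(p_2)(1-L-\ell)+\Phi(f),
\]
where $\Phi(f):=\int_{L-\ell}^L G'(f(x))\,dx + \int_{1-\ell}^1 G'(f(x))\,dx$. Since any $f\in\mathcal{C}(L,\ell)$ takes values in $[p_1,p_2]$, we have $\Phi(f)\in[2\ell m_1, 2\ell M_1]$, and by hypothesis \eqref{eq:amb} the target value $0$ lies strictly between $J$-values that would correspond to the two endpoints of this range.

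Next, I would produce two functions $f^-,f^+\in\mathcal{C}(L,\ell)$ with $J(f^-)<0<J(f^+)$. Pick $p^-,p^+\in[p_1,p_2]$ with $G'(p^-)=m_1$ and $G'(p^+)=M_1$, fix a $\Con^{1,1}([0,1])$ profile $\phi$ with $\phi(0)=0$, $\phi(1)=1$, $\phi'(0)=\phi'(1)=0$, and for small $\delta\in(0,\ell/2)$ define $f^\pm_\delta$ so that on $[L-\ell,L]$ it rises from $p_1$ to $p^\pm$ over $[L-\ell,L-\ell+\delta]$ via $\phi$, plateaus at $p^\pm$ on $[L-\ell+\delta,L-\delta]$, and rises from $p^\pm$ to $p_2$ over $[L-\delta,L]$ via a mirror image of $\phi$; the behavior on $[1-\ell,1]$ mirrors this. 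These $f^\pm_\delta$ lie in $\mathcal{C}(L,\ell)$ and satisfy $\Phi(f^\pm_\delta)=2(\ell-2\delta)G'(p^\pm)+R_\delta^\pm$ where $|R_\delta^\pm|\le 4\delta\max(|m_1|,|M_1|)$. Thus $\Phi(f^-_\delta)\to 2\ell m_1$ and $\Phi(f^+_\delta)\to 2\ell M_1$ as $\delta\downarrow 0$, and by \eqref{eq:amb} we can fix $\delta>0$ small enough that $J(f^-_\delta)<0<J(f^+_\delta)$.

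Finally, note that $\mathcal{C}(L,\ell)$ is convex: the constraints defining it ($1$-periodicity, $p_1\le f\le p_2$, fixed values on the two plateau intervals, and $\Con^{1,1}$ regularity) are all preserved under convex combinations. Hence the path $f_t:=(1-t)f^-_\delta+t f^+_\delta$ for $t\in[0,1]$ stays in $\mathcal{C}(L,\ell)$. Since $f_t\to f_{t_0}$ uniformly as $t\to t_0$ and $G'$ is uniformly continuous on $[p_1,p_2]$, the map $t\mapsto J(f_t)$ is continuous on $[0,1]$; by the intermediate value theorem there exists $t^*\in(0,1)$ with $J(f_{t^*})=0$, and $f:=f_{t^*}$ is the desired function.

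The only non-routine step is the construction of $f^\pm_\delta$, and even this is standard mollification once one commits to building the transitions out of a fixed $\Con^{1,1}$ profile vanishing at its endpoints with vanishing derivatives; the rest of the argument is essentially a convexity-plus-IVT packaging of the estimate \eqref{eq:amb}.
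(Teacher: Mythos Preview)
Your proof is correct and follows essentially the same approach as the paper: both reduce the claim to the intermediate value theorem applied to a one-parameter family in $\mathcal{C}(L,\ell)$ connecting functions whose transition intervals are dominated by the values $p_{\min}$ and $p_{\max}$ where $G'$ attains $m_1$ and $M_1$. The only cosmetic difference is that the paper builds a single function with plateaus at \emph{both} $p_{\min}$ and $p_{\max}$ and tunes their relative durations via a parameter $a\in(0,1)$, whereas you build two separate extremal functions and interpolate linearly, exploiting the convexity of $\mathcal{C}(L,\ell)$; your packaging is arguably cleaner.
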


\begin{proof}
	Since
	\begin{align*}
		&G'(p_1)(L-\ell) + G'(p_2)(1-L-\ell) + 2\ell m_1 \le \int_0^1 G'(f(x))dx\\
		\le\, &G'(p_1)(L-\ell) + G'(p_2)(1-L-\ell) + 2\ell M_1
	\end{align*}
	for every $f\in \mathcal{C}(L,\ell)$ and we have \eqref{eq:amb}, it is easy to see that $\int_0^1 G'(f(x))dx = 0$ for some $f\in \mathcal{C}(L,\ell)$. We provide an example for the sake of completeness. There exist $p_{\text{min}},p_{\text{max}}\in[p_1,p_2]$ such that $G'(p_{\text{min}}) = m_1$ and $G'(p_{\text{max}}) = M_1$. Pick $\ell'>0$ small enough so that
	\begin{equation}\label{eq:siq}
		\begin{aligned}
			&G'(p_1)(L-\ell) + G'(p_2)(1-L-\ell) + 2(\ell - \ell')m_1 + 2\ell'M_1 < 0\\
			<\, &G'(p_1)(L-\ell) + G'(p_2)(1-L-\ell) + 2(\ell - \ell') M_1 + 2\ell'm_1.
		\end{aligned}
	\end{equation}
	Take an $f\in\mathcal{C}(L,\ell)$ that spends $2(\ell - \ell')(1-a)$ amount of ``time" at $p_{\text{min}}$, $2(\ell - \ell')a$ amount of ``time" at $p_{\text{max}}$, and the remaining $2\ell'$ amount of ``time" elsewhere in $[p_1,p_2]$. Here, $a\in(0,1)$ is a parameter that we will tune, and we will not change $f$ in any other way. Observe that
	\begin{align*}
		&G'(p_1)(L-\ell) + G'(p_2)(1-L-\ell) + 2(\ell - \ell')(1-a)m_1 + 2(\ell - \ell')aM_1 + 2\ell'm_1\\
		\le\, &\int_0^1 G'(f(x))dx\\
		\le\, &G'(p_1)(L-\ell) + G'(p_2)(1-L-\ell) + 2(\ell - \ell')(1-a)m_1 + 2(\ell - \ell')aM_1 + 2\ell'M_1.
	\end{align*}
	We see from \eqref{eq:siq} that $\int_0^1 G'(f(x))dx$ is negative (resp.\ positive) for $a=0$ (resp.\ $a=1$). By the intermediate value theorem, $\int_0^1 G'(f(x))dx = 0$ for some $a\in(0,1)$.
\end{proof}

Let $f\in \mathcal{C}(L,\ell)$ be as in Lemma \ref{lem:var}. Define a potential $V:\mathbb{R}\to\mathbb{R}$ by
\begin{equation}\label{eq:rox}
	V(x) = - f'(x) - G(f(x)),
\end{equation}
so that
\begin{equation}\label{eq:hafta}
	f'(x) + G(f(x)) + V(x) = 0\quad\text{for all}\ x\in\mathbb{R}.
\end{equation}
Note that $V\in\Lip(\mathbb{R})$ and it is $1$-periodic, hence our results in Section \ref{sec:general} are applicable.

Let \[ \theta_0 = \int_0^1 f(x)dx. \]
Then, $\ol{H}(\theta_0) = 0$ and $f_{\theta_0} = f$ by \eqref{eq:hafta} and Lemma \ref{lem:equi}. Moreover, recalling \eqref{eq:intfac}, we have
\begin{equation}\label{eq:isot}
	I_{\theta_0}(1) = \int_0^1 G'(f(x))dx = 0
\end{equation}
since we have taken $f$ as in Lemma \ref{lem:var}.

Recall the notation in Lemma \ref{lem:bounds}, and let
\[ K_1 = \max\{|G'(p)|:\,p\in[p_1,p_2]\} = \max\{-m_1,M_1\}. \]

\begin{lemma}\label{lem:isim}
	$K_1(\theta_0,\theta_0 + c) \downarrow K_1$ and $K_1(\theta_0 - c,\theta_0) \downarrow K_1$ as $c\downarrow 0$.
\end{lemma}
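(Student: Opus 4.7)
My plan is to reduce both statements to showing that $M(c) := \max_{[0,1]} f_{\theta_0+c}$ decreases to $p_2$ and $m(c) := \min_{[0,1]} f_{\theta_0-c}$ increases to $p_1$ as $c \downarrow 0$. Once this is known, the conclusion follows immediately from the continuity of $G'$ and the fact that $f_{\theta_0} = f$ satisfies $\min f = p_1$ and $\max f = p_2$, so that $K_1(\theta_0, \theta_0+c) = \max\{|G'(p)| : p_1 \le p \le M(c)\}$ and, symmetrically, $K_1(\theta_0-c, \theta_0) = \max\{|G'(p)| : m(c) \le p \le p_2\}$. The monotonicity part of the statement (the $\downarrow$) will come for free from Lemma \ref{lem:order}, which makes $c \mapsto M(c)$ nondecreasing and $c \mapsto m(c)$ nonincreasing, so that both $K_1(\theta_0,\theta_0+c)$ and $K_1(\theta_0-c,\theta_0)$ are nondecreasing in $c$.

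For the first claim, I would use a one-shot bootstrap. Fix any $c_1 > 0$ and set $K := K_1(\theta_0, \theta_0 + c_1)$, which is finite because $f_{\theta_0+c_1}$ is continuous and $1$-periodic. For $0 < c \le c_1$, Lemma \ref{lem:order} gives $f_{\theta_0+c} \le f_{\theta_0+c_1}$ pointwise, hence $M(c) \le M(c_1)$ and therefore $K_1(\theta_0, \theta_0+c) \le K$. Evaluating the upper bound in Lemma \ref{lem:bounds} at the point $x_* \in [0,1]$ where $f_{\theta_0+c}$ is maximized, and using $f_{\theta_0}(x_*) \le p_2$, should give
\[ 0 < M(c) - p_2 \le f_{\theta_0+c}(x_*) - f_{\theta_0}(x_*) = g_{\theta_0,\theta_0+c}(x_*) \le c\, e^{K_1(\theta_0,\theta_0+c)} \le c\,e^{K}, \]
where the strict inequality on the left uses Lemma \ref{lem:order} at any $x_0 \in [L,1-\ell]$, for which $f_{\theta_0}(x_0) = p_2 < f_{\theta_0+c}(x_0) \le M(c)$. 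Sending $c \downarrow 0$ then forces $M(c) \downarrow p_2$, and the continuity of $G'$ delivers $K_1(\theta_0, \theta_0+c) \downarrow K_1$.

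The second claim will follow from an entirely symmetric bootstrap applied at a minimizer $x_{**}$ of $f_{\theta_0-c}$: fix $c_1 > 0$, let $K' := K_1(\theta_0-c_1, \theta_0) < \infty$, and combine Lemma \ref{lem:bounds} with the inequality $f_{\theta_0}(x_{**}) \ge p_1$ to obtain $0 < p_1 - m(c) \le c\,e^{K'}$ for every $0 < c \le c_1$; letting $c \downarrow 0$ then gives $m(c) \uparrow p_1$ and hence $K_1(\theta_0-c, \theta_0) \downarrow K_1$. I do not foresee a genuine obstacle in this argument; the only mildly delicate point, namely securing a bound on $K_1(\theta_0, \theta_0+c)$ that is uniform in $c$ near $0$, is handled cleanly by the monotonicity supplied by Lemma \ref{lem:order}.
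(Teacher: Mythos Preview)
Your argument is correct and essentially identical to the paper's own proof: both reduce to showing $\max f_{\theta_0+c}\downarrow p_2$ and $\min f_{\theta_0-c}\uparrow p_1$, obtain monotonicity in $c$ from Lemma~\ref{lem:order}, and then use the upper bound in Lemma~\ref{lem:bounds} with a fixed reference value of $c$ (you use an arbitrary $c_1$, the paper uses $c_1=1$) to get the uniform estimate $g_{\theta_0,\theta_0+c}\le c\,e^{K}$. The only difference is cosmetic: you spell out the strict inequality $M(c)>p_2$ and evaluate at a maximizer, whereas the paper passes directly to the maximum.
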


\begin{proof}
	Note that $\displaystyle{\min_{x\in[0,1]}f_{\theta_0}(x) = p_1}$ and $\displaystyle{\max_{x\in[0,1]}f_{\theta_0}(x) = p_2}$ by construction.
	
	Since the mapping $c\mapsto f_{\theta_0 + c}(x)$ is increasing for all $x\in[0,1]$ by Lemma \ref{lem:order}, it is clear that $c\mapsto K_1(\theta_0,\theta_0 + c)$ is nondecreasing. Thus, $f_{\theta_0 + c}(x) - f_{\theta_0}(x) = g_{\theta_0,\theta_0 + c}(x) \le ce^{K_1(\theta_0,\theta_0 + 1)}$ for all $c\in(0,1]$ by \eqref{eq:bounds}. In particular, $\displaystyle{\max_{x\in[0,1]}f_{\theta_0 + c}(x)\downarrow p_2}$, and hence, $K_1(\theta_0,\theta_0 + c) \downarrow K_1$ as $c\downarrow 0$.
	
	Similarly, since $c\mapsto f_{\theta_0 - c}(x)$ is decreasing for all $x\in[0,1]$ by Lemma \ref{lem:order}, it is clear that $c\mapsto K_1(\theta_0 - c,\theta_0)$ is nondecreasing. Thus, $f_{\theta_0}(x) - f_{\theta_0 - c}(x) = g_{\theta_0 - c,\theta_0}(x) \le ce^{K_1(\theta_0 - 1,\theta_0)}$ for all $c\in(0,1]$ by \eqref{eq:bounds}. In particular, $\displaystyle{\min_{x\in[0,1]}f_{\theta_0 - c}(x)\uparrow p_1}$, and hence, $K_1(\theta_0 - c,\theta_0) \downarrow K_1$ as $c\downarrow 0$.
\end{proof}

For every $\delta \ge 0$, let
\[ K_2(\delta) = \max\{|G''(p)|:\,p\in[p_1-\delta,p_2+\delta]\}, \]
and write $K_2 = K_2(0)$ for the sake of notational brevity. Denote by $\rho:[0,\infty)\to[0,\infty)$ the modulus of continuity of $G''$ on $[p_1,p_2]$.
	
\begin{lemma}\label{lem:yekun}
	Assume that
	\begin{equation}\label{eq:cond1}
		G''(p_1) < 0\quad\text{and}\quad G''(p_2) < 0.
	\end{equation}
	Recall \eqref{eq:liman} and pick $\ell \in (0,L\wedge(1-L))$ small enough so that
	\begin{equation}\label{eq:pickup1}
		\left[(L-\ell)G''(p_1) + (1-L-\ell)G''(p_2)\right]e^{-K_1} + 2\ell K_2e^{K_1} < 0.
	\end{equation}
	Then, $I_{\theta_0 + c}(1) < 0 < I_{\theta_0 - c}(1)$ when $c > 0$ is sufficiently small.
\end{lemma}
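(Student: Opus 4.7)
Since $I_{\theta_0}(1) = 0$ by \eqref{eq:isot}, I would study the signs of $I_{\theta_0 \pm c}(1)$ via a first-order expansion in the difference of the underlying correctors. The plan is to write
$$I_{\theta_0+c}(1) = \int_0^1 \bigl[G'(f_{\theta_0+c}(x)) - G'(f_{\theta_0}(x))\bigr]\,dx = \int_0^1 G''(\xi(x,c))\, g_{\theta_0,\theta_0+c}(x)\,dx,$$
where $\xi(x,c)$ is the intermediate point supplied by the mean value theorem, lying between $f_{\theta_0}(x)$ and $f_{\theta_0+c}(x)$. Lemma \ref{lem:bounds} gives $0 < g_{\theta_0,\theta_0+c}(x) \le c\,e^{K_1(\theta_0,\theta_0+c)}$, so $\xi(x,c) \to f_{\theta_0}(x)$ uniformly in $x$ as $c \downarrow 0$. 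Using the uniform continuity of $G''$ on a neighborhood of $[p_1,p_2]$, I would then replace $G''(\xi(x,c))$ by $G''(f_{\theta_0}(x))$ at the cost of an error that is $o(c)$ uniformly in $x$ (the $c$ factor coming from the upper bound on $g_{\theta_0,\theta_0+c}$).

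Next I would split $[0,1]$ according to the structure of $f_{\theta_0}=f\in\mathcal{C}(L,\ell)$: on the interval of length $L-\ell$ where $f_{\theta_0}(x)=p_1$, the integrand is $G''(p_1)\,g_{\theta_0,\theta_0+c}(x)$; on the interval of length $1-L-\ell$ where $f_{\theta_0}(x)=p_2$, it is $G''(p_2)\,g_{\theta_0,\theta_0+c}(x)$; on the leftover set of total measure $2\ell$, the integrand is bounded in absolute value by $K_2\,c\,e^{K_1(\theta_0,\theta_0+c)}$. Using \eqref{eq:cond1} together with the lower bound $g_{\theta_0,\theta_0+c}(x) \ge c\,e^{-K_1(\theta_0,\theta_0+c)}$ from Lemma \ref{lem:bounds} on the first two pieces (where multiplying a negative quantity by the larger lower bound makes it smaller), this yields
$$I_{\theta_0+c}(1) \le c\,\Bigl[\bigl((L-\ell)G''(p_1)+(1-L-\ell)G''(p_2)\bigr)\,e^{-K_1(\theta_0,\theta_0+c)} + 2\ell K_2\, e^{K_1(\theta_0,\theta_0+c)}\Bigr] + o(c).$$
By Lemma \ref{lem:isim}, the bracketed quantity converges as $c\downarrow 0$ to the left-hand side of \eqref{eq:pickup1}, which is strictly negative by hypothesis, so $I_{\theta_0+c}(1) < 0$ for all sufficiently small $c > 0$.

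The argument for $I_{\theta_0-c}(1)>0$ is parallel: since $f_{\theta_0}(x)-f_{\theta_0-c}(x)=g_{\theta_0-c,\theta_0}(x)>0$, the same mean-value-theorem expansion produces an overall minus sign, and the analogous three-piece estimate combined with the second statement of Lemma \ref{lem:isim} gives a strictly positive lower bound of the same form. The only real technical issue is the uniform $o(c)$ control when replacing $G''(\xi(x,c))$ by $G''(f_{\theta_0}(x))$, which is routine given $G\in\Con^2(\mathbb{R})$ and Lemma \ref{lem:bounds}; the conceptual content is the sign bookkeeping, namely that the negative contributions of $G''(p_1), G''(p_2)$ accumulated on the $(1-2\ell)$-portion of $[0,1]$ must outweigh the possibly positive contribution from the transition region of measure $2\ell$, which is precisely the quantitative smallness of $\ell$ encoded in \eqref{eq:pickup1}.
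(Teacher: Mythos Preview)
Your proposal is correct and follows essentially the same route as the paper: the mean-value-theorem expansion of $I_{\theta_0\pm c}(1)-I_{\theta_0}(1)$, the three-piece decomposition of $[0,1]$ according to $f_{\theta_0}\in\mathcal{C}(L,\ell)$, the sign bookkeeping using \eqref{eq:cond1} together with the two-sided bound of Lemma~\ref{lem:bounds}, and the limit via Lemma~\ref{lem:isim}. The only cosmetic difference is that the paper fixes a $\delta>0$ in advance and tracks the modulus of continuity $\rho(\delta)$ of $G''$ and the constant $K_2(\delta)$ explicitly (bounding $G''(f^*(x))$ directly rather than first replacing it by $G''(f_{\theta_0}(x))$), whereas you absorb that same estimate into an $o(c)$ term; the resulting inequalities and conclusions are identical.
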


\begin{proof}
	By \eqref{eq:cond1}--\eqref{eq:pickup1}, we can fix a $\delta\in(0,1)$ small enough so that $G''(p_i) + \rho(\delta) < 0$ for $i=1,2$, and
	\begin{equation}\label{eq:said1}
		\left[(L-\ell)(G''(p_1) + \rho(\delta)) + (1-L-\ell)(G''(p_2) + \rho(\delta))\right]e^{-K_1} + 2\ell K_2(\delta)e^{K_1} < 0.
	\end{equation}
	
	Recall from \eqref{eq:isot} that $I_{\theta_0}(1) = 0$. For any $c > 0$,
	\begin{equation}\label{eq:diz11}
		\begin{aligned}
			I_{\theta_0 + c}(1) = I_{\theta_0 + c}(1) - I_{\theta_0}(1) &= \int_0^1\left(G'(f_{\theta_0 + c}(x)) - G'(f_{\theta_0}(x))\right)dx\\
			&= \int_0^1 G''(f^*(x)) g_{\theta_0,\theta_0+c}(x)dx
		\end{aligned}
	\end{equation}
	for some $f^*(x) \in \left( f_{\theta_0}(x), f_{\theta_0 + c}(x) \right)$ by the mean value theorem. By \eqref{eq:bounds},
	\begin{equation}\label{eq:alv1}
		f^*(x) - f_{\theta_0}(x) \le g_{\theta_0,\theta_0+c}(x) \le ce^{K_1(\theta_0,\theta_0+c)}
	\end{equation}
	for all $x\in\mathbb{R}$. Pick $c>0$ small enough so that $ce^{K_1(\theta_0,\theta_0+c)} \le \delta$. Then,
	\begin{equation}\label{eq:sol11}
		\begin{aligned}
			&G''(f^*(x)) \le K_2(\delta)\quad\text{for all $x\in\mathbb{R}$, and}\\
			&G''(f^*(x)) \le G''(p_i) + \rho(\delta)\quad\text{when $f_{\theta_0}(x) = p_i$ for $i=1,2$.}
		\end{aligned}
	\end{equation}
	Going back to \eqref{eq:diz11} and using the bounds in \eqref{eq:bounds} and \eqref{eq:sol11}, we obtain the following inequality:
	\begin{align*}
		I_{\theta_0 + c}(1) &\le (L-\ell)(G''(p_1) + \rho(\delta))ce^{-K_1(\theta_0,\theta_0+c)} + (1-L-\ell)(G''(p_2) + \rho(\delta))ce^{-K_1(\theta_0,\theta_0+c)}\\
		&\quad + 2\ell K_2(\delta)ce^{K_1(\theta_0,\theta_0+c)}.
	\end{align*}		
	Rearranging the right-hand side of this inequality, we see that
	\begin{align*}
		\frac1{c}\,I_{\theta_0 + c}(1) &\le \left[(L-\ell)(G''(p_1) + \rho(\delta)) + (1-L-\ell)(G''(p_2) + \rho(\delta))\right]e^{-K_1(\theta_0,\theta_0+c)}\\
		&\quad + 2\ell K_2(\delta)e^{K_1(\theta_0,\theta_0+c)}.
	\end{align*}
	Recalling \eqref{eq:said1} and Lemma \ref{lem:isim}, we conclude that $I_{\theta_0 + c}(1) < 0$ for $c>0$ sufficiently small.
	
	A similar argument (with $\theta_0 - c$ and $K_1(\theta_0-c,\theta_0)$ in place of $\theta_0 + c$ and $K_1(\theta_0,\theta_0+c)$, respectively) shows that $-I_{\theta_0 - c}(1) < 0$ for $c>0$ sufficiently small.
\end{proof}

\begin{lemma}\label{lem:yuled}
	Assume that
	\begin{equation}\label{eq:cond2}
		G''(p_1) < 0,\quad G''(p_2) \ge 0\quad\text{and}\quad G''(p_1)G'(p_2)e^{-K_1} < G''(p_2)G'(p_1)e^{K_1}.
	\end{equation}
	Note that the last inequality is equivalent to
	\[ LG''(p_1)e^{-K_1} + (1-L)G''(p_2)e^{K_1} < 0 \]
	with $L\in(0,1)$ defined in \eqref{eq:liman}. Pick $\ell \in (0,L\wedge(1-L))$ small enough so that
	\begin{equation}\label{eq:pickup2}
		(L-\ell)G''(p_1)e^{-K_1} + \left[(1-L-\ell)G''(p_2) + 2\ell K_2\right]e^{K_1} < 0.
	\end{equation}
	Then, $I_{\theta_0 + c}(1) < 0 < I_{\theta_0 - c}(1)$ when $c > 0$ is sufficiently small.
\end{lemma}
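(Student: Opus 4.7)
The plan is to mirror the argument given in Lemma \ref{lem:yekun}, with one crucial modification: because we now allow $G''(p_2) \ge 0$, the $p_2$--portion of the integral no longer contributes a negative quantity, so we cannot afford to use the small factor $e^{-K_1}$ there. Instead we bound that portion using the \emph{upper} bound on $g_{\theta_0,\theta_0 + c}$ coming from \eqref{eq:bounds}, which accounts for the reshuffling of exponentials in \eqref{eq:pickup2} relative to \eqref{eq:pickup1}.

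First I would verify the stated equivalence: using $L = G'(p_2)/(G'(p_2) - G'(p_1))$ and $1-L = -G'(p_1)/(G'(p_2) - G'(p_1))$ together with $G'(p_2) - G'(p_1) > 0$, the inequality $G''(p_1)G'(p_2)e^{-K_1} < G''(p_2)G'(p_1)e^{K_1}$ is precisely $L G''(p_1)e^{-K_1} + (1-L)G''(p_2)e^{K_1} < 0$. By continuity in $\ell$, this in turn implies that \eqref{eq:pickup2} holds for all sufficiently small $\ell > 0$, justifying the choice of $\ell$ in the hypothesis. Then I would fix $\delta \in (0,1)$ small enough that $G''(p_1) + \rho(\delta) < 0$ and the $\delta$-perturbed analogue of \eqref{eq:pickup2}, namely
\begin{equation*}
	(L-\ell)(G''(p_1) + \rho(\delta))e^{-K_1} + \bigl[(1-L-\ell)(G''(p_2) + \rho(\delta)) + 2\ell K_2(\delta)\bigr]e^{K_1} < 0,
\end{equation*}
still holds.

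Next, since $I_{\theta_0}(1) = 0$ by \eqref{eq:isot}, I would apply the mean value theorem exactly as in \eqref{eq:diz11} to write, for each $c > 0$,
\begin{equation*}
	I_{\theta_0 + c}(1) = \int_0^1 G''(f^*(x))\,g_{\theta_0,\theta_0 + c}(x)\,dx
\end{equation*}
for some $f^*(x) \in (f_{\theta_0}(x), f_{\theta_0 + c}(x))$. Then, for $c$ small enough that $c e^{K_1(\theta_0, \theta_0 + c)} \le \delta$, the same reasoning leading to \eqref{eq:sol11} gives $G''(f^*(x)) \le K_2(\delta)$ on $\mathbb{R}$ and $G''(f^*(x)) \le G''(p_i) + \rho(\delta)$ wherever $f_{\theta_0}(x) = p_i$. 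I would split the integral into the three regions determined by $f = f_{\theta_0} \in \mathcal{C}(L,\ell)$: on $[0,L-\ell]$ the integrand is negative (since $G''(p_1) + \rho(\delta) < 0$ and $g > 0$), so I use the \emph{lower} bound $g \ge c e^{-K_1(\theta_0, \theta_0 + c)}$ from \eqref{eq:bounds} to get the largest (least negative) upper bound; on $[L, 1-\ell]$ the sign of $G''(p_2) + \rho(\delta)$ may be nonnegative, so I use the \emph{upper} bound $g \le c e^{K_1(\theta_0, \theta_0 + c)}$; on the remaining transition set of length $2\ell$ I bound $G''(f^*)$ by $K_2(\delta)$ and again use the upper bound on $g$. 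Dividing by $c$ yields
\begin{equation*}
	\tfrac{1}{c}I_{\theta_0 + c}(1) \le (L-\ell)(G''(p_1)+\rho(\delta))e^{-K_1(\theta_0, \theta_0 + c)} + \bigl[(1-L-\ell)(G''(p_2)+\rho(\delta)) + 2\ell K_2(\delta)\bigr]e^{K_1(\theta_0, \theta_0 + c)}.
\end{equation*}
Sending $c \downarrow 0$ and invoking Lemma \ref{lem:isim} together with the $\delta$-perturbed form of \eqref{eq:pickup2} shows $I_{\theta_0 + c}(1) < 0$ for all sufficiently small $c > 0$. The estimate $-I_{\theta_0 - c}(1) < 0$ follows by the same three-region splitting applied to $-I_{\theta_0 - c}(1) = \int_0^1 G''(f_*(x))\,g_{\theta_0 - c,\theta_0}(x)\,dx$, with $K_1(\theta_0 - c, \theta_0)$ in place of $K_1(\theta_0, \theta_0 + c)$.

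The main technical point, and what distinguishes this argument from the proof of Lemma \ref{lem:yekun}, is the asymmetric bookkeeping in the three-region estimate: one must carefully apply the lower bound on $g$ only where the integrand is negative, and the upper bound where it may not be. Once this is set up correctly, the remainder is routine limiting and the role of hypothesis \eqref{eq:pickup2} is transparent.
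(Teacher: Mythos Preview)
Your proposal is correct and follows essentially the same approach as the paper: fix $\delta$ so that the $\delta$-perturbed version of \eqref{eq:pickup2} (the paper's \eqref{eq:said2}) holds, apply the mean value theorem as in \eqref{eq:diz11}, and split the integral into the three regions, using the lower bound from \eqref{eq:bounds} on the $p_1$-region and the upper bound on the $p_2$- and transition regions. Your explicit remark that the asymmetric bookkeeping (lower bound where the integrand is negative, upper bound elsewhere) is the sole modification relative to Lemma~\ref{lem:yekun} is exactly the point, and your verification of the equivalence in the statement is a nice addition the paper leaves implicit.
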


\begin{proof}
	By \eqref{eq:cond2}--\eqref{eq:pickup2}, we can fix a $\delta\in(0,1)$ small enough so that $G''(p_1) + \rho(\delta) < 0$ and
	\begin{equation}\label{eq:said2}
		(L-\ell)(G''(p_1) + \rho(\delta))e^{-K_1} + \left[(1-L-\ell)(G''(p_2) + \rho(\delta)) + 2\ell K_2(\delta)\right]e^{K_1} < 0.
	\end{equation}
	
	We follow the proof of Lemma \ref{lem:yekun}. Recall from \eqref{eq:isot} that $I_{\theta_0}(1) = 0$. For any $c > 0$, \eqref{eq:diz11} holds for some $f^*(x) \in \left( f_{\theta_0}(x), f_{\theta_0 + c}(x) \right)$ by the mean value theorem. By \eqref{eq:bounds}, the inequalities in \eqref{eq:alv1} are valid
	%\[ f^*(x) - f_{\theta_0}(x) \le g_{\theta_0,\theta_0+c}(x) \le ce^{K_1(\theta_0,\theta_0+c)} \]
	for all $x\in\mathbb{R}$. Pick $c>0$ small enough so that $ce^{K_1(\theta_0,\theta_0+c)} \le \delta$. Then, we have \eqref{eq:sol11}. Going back to \eqref{eq:diz11} and using the bounds in \eqref{eq:bounds} and \eqref{eq:sol11}, we obtain the following inequality:
	\begin{align*}
		I_{\theta_0 + c}(1) &\le (L-\ell)(G''(p_1) + \rho(\delta))ce^{-K_1(\theta_0,\theta_0+c)} + (1-L-\ell)(G''(p_2) + \rho(\delta))ce^{K_1(\theta_0,\theta_0+c)}\\
		&\quad + 2\ell K_2(\delta)ce^{K_1(\theta_0,\theta_0+c)}.
	\end{align*}		
	Rearranging the right-hand side of this inequality, we see that
	\begin{align*}
		\frac1{c}\,I_{\theta_0 + c}(1) &\le (L-\ell)(G''(p_1) + \rho(\delta))e^{-K_1(\theta_0,\theta_0+c)}\\
		&\quad + \left[(1-L-\ell)(G''(p_2) + \rho(\delta)) + 2\ell K_2(\delta)\right]e^{K_1(\theta_0,\theta_0+c)}.
	\end{align*}
	Recalling \eqref{eq:said2} and Lemma \ref{lem:isim}, we conclude that $I_{\theta_0 + c}(1) < 0$ for $c>0$ sufficiently small.
	
	A similar argument (with $\theta_0 - c$ and $K_1(\theta_0-c,\theta_0)$ in place of $\theta_0 + c$ and $K_1(\theta_0,\theta_0+c)$, respectively) shows that $-I_{\theta_0 - c}(1) < 0$ for $c>0$ sufficiently small.
	\begin{comment}
	Similarly, for any $c > 0$, \eqref{eq:diz12} holds for some $f_*(x) \in \left( f_{\theta_0 - c}(x), f_{\theta_0}(x) \right)$ by the mean value theorem. By \eqref{eq:bounds}, the inequalities in \eqref{eq:alv2} are valid
	%\[ f_{\theta_0}(x) - f_*(x) \le g_{\theta_0-c,\theta_0}(x) \le ce^{K_1(\theta_0-c,\theta_0)} \]
	for all $x\in\mathbb{R}$. Pick $c>0$ small enough so that $ce^{K_1(\theta_0-c,\theta_0)} \le \delta$. Then, we have \eqref{eq:sol12}. Going back to \eqref{eq:diz12} and using the bounds in \eqref{eq:bounds} and \eqref{eq:sol12}, we obtain the following inequality:
	\begin{align*}
		-I_{\theta_0 - c}(1) &\le (L-\ell)(G''(p_1) + \rho(\delta))ce^{-K_1(\theta_0-c,\theta_0)} + (1-L-\ell)(G''(p_2) + \rho(\delta))ce^{K_1(\theta_0-c,\theta_0)}\\
		&\quad + 2\ell K_2(\delta)ce^{K_1(\theta_0-c,\theta_0)}.
	\end{align*}		
	Rearranging the right-hand side of this inequality, we see that
	\begin{align*}
		-\frac1{c}\,I_{\theta_0 - c}(1) &\le (L-\ell)(G''(p_1) + \rho(\delta))e^{-K_1(\theta_0-c,\theta_0)}\\
		&\quad + \left[(1-L-\ell)(G''(p_2) + \rho(\delta)) + 2\ell K_2(\delta)\right]e^{K_1(\theta_0-c,\theta_0)}.
	\end{align*}
	Recalling \eqref{eq:said2} and Lemma \ref{lem:isim}, we conclude that $-I_{\theta_0 - c}(1) < 0$ for $c>0$ sufficiently small.
	\end{comment}
\end{proof}

%Putting everything together, we establish our main result in one dimension.

\begin{proof}[Proof of Theorem \ref{thm:hicaz}]
	Recall \eqref{eq:liman}. Under the assumption \eqref{eq:Gduz}, either \eqref{eq:cond1} or \eqref{eq:cond2} is true. In the former (resp.\ latter) case, 
	pick $\ell \in (0,L\wedge(1-L))$ small enough so that \eqref{eq:pickup1} (resp.\ \eqref{eq:pickup2}) holds. Let $f\in \mathcal{C}(L,\ell)$ be as in Lemma \ref{lem:var}, and define $V\in\Lip(\mathbb{R})$ by \eqref{eq:rox}. Fix a sufficiently small $c>0$ so that $I_{\theta_0 - c}(1) > 0 > I_{\theta_0 + c}(1)$ by Lemma \ref{lem:yekun} (resp.\ Lemma \ref{lem:yuled}) in the former (resp.\ latter) case. Apply Lemma \ref{lem:yous} (with $\theta = \theta_0 \pm c$) to deduce that, for sufficiently small $h>0$, there exist $\theta_1^*,\theta_2^*\in\mathbb{R}$ such that
	\[ \theta_0-c < \theta_1^* < \theta_0-c + hb(\theta_0-c) < \theta_0 < \theta_0+c - hb(\theta_0+c) < \theta_2^* < \theta_0+c, \]
	\[ \ol{H}(\theta_0-c) < \ol{H}(\theta_1^*)\quad\text{and}\quad\ol{H}(\theta_2^*) > \ol{H}(\theta_0+c). \]
	We conclude that $\ol{H}$ is not quasiconvex on $[\theta_0 - c, \theta_0 + c]$.
\end{proof}

\begin{proof}[Proof of Corollary \ref{cor:modify}]
	We will work with the bump function $\Psi\in\Con^2(\mathbb{R})$ defined by
	\[ \Psi(p) = \begin{cases} (1-p^2)^3\ &\text{if}\ |p| \le 1,\\ 0\ &\text{if}\ |p| > 1.\end{cases} \]
	Note that $\Psi'(p) = -6p(1 - p^2)^2$ and $\Psi''(p) = -6(1-p^2)(1-5p^2)$ for all $p\in[-1,1]$,
	\begin{equation}\label{eq:sub}
		\max\{|\Psi'(p)|:\,p \in[-1,1]\} = \frac{96}{25\sqrt{5}} < 2,\quad\Psi'(0) = 0\quad\text{and}\quad\Psi''(0) = -6.
	\end{equation}
	
	For any convex $G\in\Con^2(\mathbb{R})\cap\mathcal{G}_0$ and $p_*,p^*\in\mathbb{R}$ with $p_* < p^*$, we will define $\t{G}_\delta\in\Con^2(\mathbb{R})\cap\mathcal{G}_0$ by setting
	\begin{equation}\label{eq:Gurb}
		\t{G}_\delta(p) =  G(p) + a\delta\Psi\left(\frac{p-p_0}{\delta}\right)\quad\text{for all $p\in\mathbb{R}$},
	\end{equation}
	with suitably chosen parameters $a\in[-1,\infty)$, $p_0\in\mathbb{R}$ and $\delta>0$. Since $G'$ is nondecreasing, it falls into (at least) one of the following cases.

	\smallskip
	
	\textbf{Case 1:} Suppose that $G'(p_*) < 0$. Let $a = -\frac{G'(p_*)}{4} > 0$. Then, there exist $p_1\in\mathbb{R}$ and $\delta>0$ such that
	$p_* = p_1-\delta < p_1+\delta \le p^*$ and $G'(p_1 + \delta) \le -2a$. Define $\t{G}_\delta$ as in \eqref{eq:Gurb} with $p_0 = p_1$. Since $\t{G}_\delta'(p) < -2a+2a =0$ on $[p_1 - \delta,p_1 + \delta]$ by \eqref{eq:sub}, both $G$ and $\t{G}_\delta$ are strictly decreasing on this interval. It follows that $\t{G}_\delta$ is quasiconvex on $\mathbb{R}$.
	
	Take any $p_2 > p_1 + \delta$ such that $G'(p_2) > 0$.
	Let
	\[ K_1 = \max\{|G'(p)|:\,p\in[p_1,p_2]\}\quad\text{and}\quad\t{K}_1(\delta) = \max\{|\t{G}_\delta'(p)|:\,p\in[p_1,p_2]\}.\]
	By \eqref{eq:sub}, $|\t{K}_1(\delta) - K_1| < 2a$ and 
	\begin{equation}\label{eq:ninin}
		\t{G}_\delta''(p_1) = G''(p_1) - \frac{6a}{\delta} < 0
	\end{equation} for sufficiently small $\delta>0$. Moreover,
	\[ \t{G}_\delta'(p_1) = G'(p_1) \le -2a < 0 < G'(p_2) = \t{G}_\delta'(p_2)\quad\text{and}\quad\t{G}_\delta''(p_2) = G''(p_2) \ge 0 \]
	by construction. Therefore,
	\begin{align}
		\t{G}_\delta''(p_1)\t{G}_\delta'(p_2)e^{-\t{K}_1(\delta)} &< \left(G''(p_1) - \frac{6a}{\delta}\right) G'(p_2)e^{-K_1 - 2a}\nonumber\\
		&<  G''(p_2)G'(p_1)e^{K_1 + 2a} \le \t{G}_\delta''(p_2)\t{G}_\delta'(p_1)e^{\t{K}_1(\delta)}\label{eq:alides},
	\end{align}
	where the first inequality in \eqref{eq:alides} holds for sufficiently small $\delta>0$. Hence, $\t{G}_\delta$ satisfies the conditions in Theorem \ref{thm:hicaz}, and the desired conclusion follows. See Figure \ref{fig:case1} for an example.
	
	\begin{figure}
		\centering
		\includegraphics[width=0.6\linewidth]{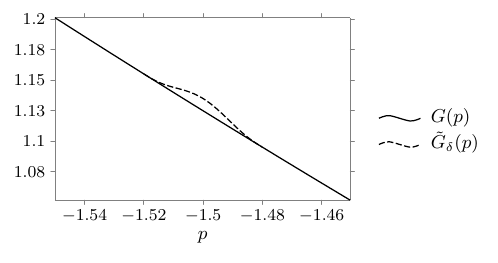}
		\caption{To illustrate Case 1 in the proof of Corollary \ref{cor:modify}, consider $G(p) = \frac12p^2$ and $[p_*,p^*] = [-2,-1]$. Then, $a = \frac12$ and $p_0 = p_1 = -\frac32$. If $p_2 = \frac32$, then $K_1 = \frac32$, and \eqref{eq:ninin}--\eqref{eq:alides} hold for $\delta < \frac{3}{1+e^5}$. In the plot above, we have taken $\delta = \frac{1}{50}$.}
		\label{fig:case1}
	\end{figure}
	
	\smallskip
	
	\textbf{Case 2:} Suppose that $G'(p^*) > 0$. Applying the change of variables in Remark \ref{rem:ters} puts us in the previous case.
	
	\smallskip
	
	\textbf{Case 3:} Suppose that $G'(p) = 0$ for all $p\in[p_*,p^*]$. Let $a=-1$, $p_0 = \frac{p_* + p^*}{2}$ and $\delta = \frac{p^* - p_*}{2}$, so that $[p_*,p^*] = [p_0-\delta,p_0+\delta]$. Define $\t{G}_\delta$ as in \eqref{eq:Gurb}. It is easy to see that $\t{G}_\delta$ is quasiconvex on $\mathbb{R}$.
	
	Take $p_1 = p_0 - \frac{\delta}2$ and $p_2 = p_0 + \frac{\delta}2$. Note that
	\[ \t{G}_\delta'(p_1) = -\Psi'\left(-\frac12\right) < 0 < -\Psi'\left(\frac12\right) = \t{G}_\delta'(p_2)\quad\text{and}\quad\t{G}_\delta''(p_1) = \t{G}_\delta''(p_2) = -\frac1{\delta}\Psi''\left(\frac12\right) < 0. \]
	Hence, $\t{G}_\delta$ satisfies the conditions in Theorem \ref{thm:hicaz} (see Remark \ref{rem:ters}), and the desired conclusion follows. See Figure \ref{fig:case3} for an example.\qedhere
	
	\begin{figure}
		\centering
		\includegraphics[width=0.6\linewidth]{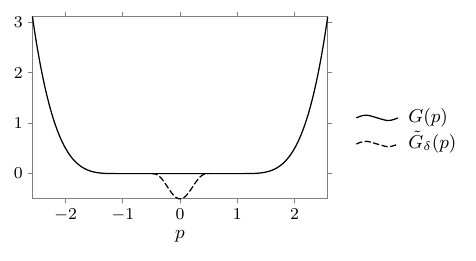}
		\caption{To illustrate Case 3 in the proof of Corollary \ref{cor:modify}, consider $G(p)=\frac{1}{2}((|p|\vee 1) - 1)^4$ and $[p_*,p^*] = [-\frac12,\frac12]$. Then, $p_0 = 0$, $\delta = \frac12$, $p_1 = -\frac14$ and $p_2 = \frac14$.}
		\label{fig:case3}
	\end{figure}

\end{proof}

\section{Extension to higher dimensions}\label{sec:multi}

Recall the outline we have given in Section \ref{ss:higher}. Fix any $G_1\in\Con^2(\mathbb{R})\cap\mathcal{G}_1$ such that
\begin{align*}
	&G_1(0) = 0,\quad G_1(p) = G_1(-p) > 0\ \ \text{and}\ \ G_1'(p) > 0\ \ \text{for all $p>0$},\\%\label{eq:mueven}\\
	&G_1''(p) < 0\ \ \text{for some $p> 0$},\ \ \text{and}\\%\label{eq:mucurve}\\
	&M := -\inf\left\{\frac{G_1''(p)}{(G_1'(p))^2}:\, p>0\right\} \in (0,\infty).%\label{eq:em}
\end{align*}
In particular, $G_1$ is an even quasiconvex function that satisfies the conditions in Theorem \ref{thm:hicaz}.

\begin{lemma}\label{lem:qbox}
	Given $G_1$ as above and any $R>0$, there exists an even convex $\breve{G}\in \Con^2(\mathbb{R})\cap\mathcal{G}_1$ such that
	the sets
	\[S_r = \left\{p\in\mathbb{R}^d:\, G_1(p_1) +
	\sum_{i=2}^d\breve{G}(p_i) \le r\right\},\quad r\le R,\] are convex.
\end{lemma}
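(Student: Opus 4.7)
The plan is to construct $\breve{G}$ as a rescaled log-cosine on a bounded symmetric interval $[-Y,Y]$ and then extend it quadratically to the rest of $\mathbb{R}$. The driving inequality is the strong-convexity ratio bound
\[
\frac{\breve{G}''(y)}{(\breve{G}'(y))^{2}}\ge\gamma\qquad\text{for all $y\in(0,Y]$,}
\]
with any fixed $\gamma>(d-1)M$. A clean realisation is $\breve{G}_{0}(y):=-\gamma^{-1}\log\cos(\kappa y)$ on $(-\pi/(2\kappa),\pi/(2\kappa))$: one computes $\breve{G}_{0}'(y)=(\kappa/\gamma)\tan(\kappa y)$, $\breve{G}_{0}''(y)=(\kappa^{2}/\gamma)\sec^{2}(\kappa y)$, so $\breve{G}_{0}$ is even, $\Con^{\infty}$ and strictly convex on its domain, with $\breve{G}_{0}''/(\breve{G}_{0}')^{2}=\gamma/\sin^{2}(\kappa y)\ge\gamma$. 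Given $R>0$, pick any $\kappa>0$ and set $Y=\kappa^{-1}\arccos(e^{-\gamma R})\in(0,\pi/(2\kappa))$, so $\breve{G}_{0}(Y)=R$. On $|y|>Y$ extend $\breve{G}_{0}$ by the unique quadratic matching its value and first two derivatives at $\pm Y$. The result is an even, strictly convex $\breve{G}\in\Con^{2}(\mathbb{R})\cap\mathcal{G}_{1}$ of quadratic growth (polynomial of order $\eta=2$).

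Because $\breve{G}\ge 0$ is even, increasing on $[0,\infty)$, and satisfies $\breve{G}(Y)=R$, and because $G_{1}\ge 0$, whenever $p\in S_{r}$ with $r\le R$ we have $\breve{G}(p_{i})\le R$ for each $i\ge 2$ and hence $|p_{i}|\le Y$. So the ratio bound only has to work on the slab $\mathbb{R}\times[-Y,Y]^{d-1}$. To conclude that $S_{r}$ is convex for every $r\le R$, I will argue via the \emph{no interior maximum on chords} criterion: if for every $q\ne 0$ in the slab and every $v\ne 0$ with $\nabla F(q)\cdot v=0$ one has $v^{\top}\nabla^{2}F(q)\,v>0$ (where $F(p):=G_{1}(p_{1})+\sum_{i\ge 2}\breve{G}(p_{i})$), then the one-variable function $\phi(t):=F(p+t(p'-p))$ on $[0,1]$ has every interior critical point a strict local minimum, so $\max_{[0,1]}\phi$ is attained at an endpoint and $\phi\le\max\{\phi(0),\phi(1)\}\le r$ along the whole chord. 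Chords that happen to pass through the unique minimum $0$ of $F$ are split at $0$ and handled using $F(0)=0\le r$.

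The main estimate is the bordered-Hessian inequality. Since $\nabla^{2}F(q)$ is diagonal, writing $a_{i}:=\breve{G}'(p_{i})$ and $b_{i}:=\breve{G}''(p_{i})$ for $i\ge 2$ and eliminating $v_{1}$ from $G_{1}'(p_{1})v_{1}+\sum_{i\ge 2}a_{i}v_{i}=0$ in the case $G_{1}'(p_{1})\ne 0$ gives
\[
v^{\top}\nabla^{2}F(q)\,v=\frac{G_{1}''(p_{1})}{(G_{1}'(p_{1}))^{2}}\Bigl(\sum_{i\ge 2}a_{i}v_{i}\Bigr)^{2}+\sum_{i\ge 2}b_{i}v_{i}^{2}.
\]
Cauchy--Schwarz yields $\bigl(\sum a_{i}v_{i}\bigr)^{2}\le\bigl(\sum a_{i}^{2}/b_{i}\bigr)\bigl(\sum b_{i}v_{i}^{2}\bigr)$, and the ratio bound from the construction combined with the definition of $M$ gives $\sum_{i\ge 2}a_{i}^{2}/b_{i}\le(d-1)/\gamma$ and $G_{1}''(p_{1})/(G_{1}'(p_{1}))^{2}\ge -M$. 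Therefore
\[
v^{\top}\nabla^{2}F(q)\,v\ge\Bigl(1-\frac{(d-1)M}{\gamma}\Bigr)\sum_{i\ge 2}b_{i}v_{i}^{2}>0
\]
by the choice $\gamma>(d-1)M$ (strict positivity uses that $\sum b_{i}v_{i}^{2}=0$ forces all $v_{i}=0$ for $i\ge 2$ and then $v_{1}=0$ from the constraint). If instead $G_{1}'(p_{1})=0$ then $p_{1}=0$, so $G_{1}''(0)\ge 0$ (as $0$ minimises $G_{1}$) and the diagonal form of $\nabla^{2}F$ directly yields $v^{\top}\nabla^{2}F(q)\,v\ge 0$ with equality only at $v=0$. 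The delicate point is calibrating $\gamma$ so that a single scalar ratio bound, provided by one explicit one-variable function, dominates the pooled effect of all $d-1$ convex coordinates simultaneously; this is exactly the role of the factor $(d-1)$ in the condition $\gamma>(d-1)M$.
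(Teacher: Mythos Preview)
Your approach is essentially the paper's: both reduce convexity of $S_r$ to the scalar inequality $\breve{G}''(p)/(\breve{G}'(p))^2 > (d-1)M$ via Cauchy--Schwarz, realise it with an explicit logarithmic-type function on a bounded interval (yours is $-\gamma^{-1}\log\cos(\kappa\,\cdot\,)$, the paper's is $p\mapsto -[\log(1-p)+p]/[M(d-1)]$ on $[0,1)$), and extend quadratically to land in $\mathcal{G}_1$. The only real difference is the packaging of the convexity step: the paper writes $p_1$ as a function of $p'=(p_2,\ldots,p_d)$ on the level surface in the half-space $\{p_1>0\}$ and shows it is concave, whereas you use the bordered-Hessian/chord criterion on $F$ directly.

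There is one small oversight. In the case $p_1=0$ you assert $v^\top\nabla^2F(q)\,v>0$ for every nonzero tangent $v$; but if $G_1''(0)=0$ (which the hypotheses on $G_1$ do not exclude) then $v=e_1$ is tangent, since $\nabla F(q)\cdot e_1=G_1'(0)=0$, and gives $v^\top\nabla^2F(q)\,v=G_1''(0)=0$. The fix is immediate: if the chord direction $p'-p$ is not parallel to $e_1$ then some $v_i\ne 0$ with $i\ge 2$, so $\sum_{i\ge 2} b_iv_i^2>0$ and your estimate yields $\phi''>0$ at every interior critical point; if the chord is parallel to $e_1$ then $\phi(t)=G_1(\,\cdot\,)+\text{const}$ and the quasiconvexity of $G_1$ gives the endpoint maximum directly. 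The paper's implicit-function formulation sidesteps this corner case by working entirely in $\{p_1>0\}$ and then invoking the evenness of $G_1$.
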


\begin{proof}
	Suppose that $\breve{G}\in \Con^2(\mathbb{R})\cap\mathcal{G}_1$ is nonnegative, convex, and $\breve{G}(p)=0$ if and only if $p=0$. Then, $S_0 = \{(0,0,\ldots,0)\}$. For $r>0$, consider the level surface
	defined by the equation 
	\[ G_1(p_1)+\sum_{i=2}^d\breve{G}(p_i)= r. \]
	Since $G_1$ is even, it is enough to analyze only the part of this
	level surface that lies in the upper half-space
	$\mathbb{R}^d_+:=\{p\in\mathbb{R}^d:\,p_1> 0\}$. Recall that
	$G_1'(p_1)>0$ for $p_1>0$. We will consider $p_1$ as a function of
	$p':=(p_2,p_3,\ldots,p_d)$ and denote by $\pd_i$ the partial
	derivative with respect to $p_i$, $i=2,3,\ldots,d$. With this notation,
	\begin{align*}
		&G_1'(p_1)\pd_i{p_1}+\breve{G}'(p_i)=0\quad\text{and}\quad \pd_i{p_1}=-\frac{\breve{G}'(p_i)}{G_1'(p_1)}\quad\text{for all $i=2,3,\ldots,d$,}\\
		&G_1''(p_1)\pd_i{p_1}\pd_j{p_1} + G_1'(p_1)\pd_{ij}^2{p_1}+\breve{G}''(p_i)\delta_{ij} = 0\ \ \text{and}\\
		-&G_1'(p_1)\pd_{ij}^2{p_1} = \breve{G}''(p_i)\delta_{ij} + \frac{G_1''(p_1)}{(G_1'(p_1))^2}\breve{G}'(p_i)\breve{G}'(p_j)\quad\text{for all $i,j=2,3,\ldots,d$,}
	\end{align*}
	where $\delta_{ij}$ is the Kronecker delta. For every $z' = (z_2,z_3,\ldots,z_d)\in\mathbb{R}^{d-1}$,
	\begin{align}
		-G_1'(p_1)\sum_{i,j=2}^d\left(\pd_{ij}^2{p_1}\right) z_iz_j &= \sum_{i=2}^d\breve{G}''(p_i)z_i^2 + \frac{G_1''(p_1)}{(G_1'(p_1))^2}\left(\sum_{i=2}^d\breve{G}'(p_i)z_i\right)^2\nonumber\\
		&\ge \sum_{i=2}^d\breve{G}''(p_i)z_i^2 - M\left(\sum_{i=2}^d\breve{G}'(p_i)z_i\right)^2\nonumber\\
		&\ge \sum_{i=2}^d\breve{G}''(p_i)z_i^2 - M(d-1)\sum_{i=2}^d(\breve{G}'(p_i))^2z_i^2\nonumber\\
		&= \sum_{i=2}^d\left[\breve{G}''(p_i) - M(d-1)(\breve{G}'(p_i))^2\right]z_i^2\label{eq:secder}
	\end{align}
	by the Cauchy-Schwarz inequality. We will show that $\breve{G}$ can be chosen in such a way that the expression in square brackets in \eqref{eq:secder} is strictly positive on $\mathring{S}'_r:=\{p'\in\mathbb{R}^{d-1}:\, \sum_{i=2}^d\breve{G}(p_i)<r\}$.
	
	Define $J:[0,1)\to[0,\infty)$ by setting
	\begin{align}
		J(p) &= -\frac{\log(1-p) + p}{M(d-1)},\quad p\in [0,1).\label{eq:jay}\\
		\shortintertext{Note that $J(0) = 0$, $J'(0) = 0$,}
		J'(p) &= \frac1{M(d-1)}\left(\frac1{1-p} - 1\right)> 0\quad \text{for all}\ p\in(0,1),\quad \text{and}\nonumber\\
		J''(p) &= \frac1{M(d-1)}\left(\frac1{1-p}\right)^2\quad\text{for all}\ p\in[0,1).\nonumber\\
		\shortintertext{Hence,}
		J''(p) &> M(d-1)(J'(p))^2\quad\text{for all}\ p\in[0,1).\label{eq:LBem}
	\end{align}
	
	The function $J:[0,1) \to [0,\infty)$ is invertible. For any $R>0$, let $p_R = J^{-1}(R)\in (0,1)$, and define
	\begin{equation}\label{eq:deftG}
		\breve{G}(p) = \begin{cases} J(|p|) &\ \text{if}\ |p| \le p_R,\\
			R + J'(p_R)(|p| - p_R) + \frac12J''(p_R)(|p| - p_R)^2 &\ \text{if}\ |p| > p_R.
		\end{cases}
	\end{equation}
	Observe that $\breve{G}\in\Con^2(\mathbb{R})\cap\mathcal{G}_1$ is even, strictly convex, and
	$\breve{G}(0)=0$. 
	
	For every $r \le R$, we have the inclusion $\mathring{S}'_r\subset[-p_R,p_R]^{d-1}$. Therefore, the Hessian of
	$p_1$ on $\mathring{S}'_r$ is negative definite by \eqref{eq:secder} and \eqref{eq:LBem}--\eqref{eq:deftG}, and $p_1$ is concave on $\mathring{S}'_r$.
	%Moreover, the vector $(G_1'(p_1),\breve{G}'(p_2),\ldots,\breve{G}'(p_d))$ is normal to the level surface in \eqref{eq:level}, and $G_1'(p_1) \ge 0$ for $p_1\ge0$.
	It follows that the sublevel sets $S_r$ of the function $G_1(p_1) + \sum_{i=2}^d\breve{G}(p_i)$ are convex for all $r\le R$.
\end{proof}

\begin{proof}[Proof of Theorem \ref{thm:multi}]
	Fix any $G_1\in\Con^2(\mathbb{R})\cap\mathcal{G}_1$ satisfying the conditions of Lemma~\ref{lem:qbox},
	and pick a $1$-periodic $V_1\in\Lip(\mathbb{R})$ as in Theorem \ref{thm:hicaz} such that the corresponding
	effective Hamiltonian $\ol{H}_1$ is not quasiconvex on some
	interval $[\theta_0-c,\theta_0+c]$, $c\in(0,1)$, i.e., 
	\begin{equation}\label{eq:failq}
		\ol{H}_1(\lambda(\theta_0-c)+(1-\lambda)(\theta_0+c))>\max\{\ol{H}_1(\theta_0-c),\ol{H}_1(\theta_0+c)\}\quad\text{for some }\lambda\in(0,1).
	\end{equation}
	Let
	\begin{align*}
		R_1 &= \max\{G_1(\theta_0-c),G_1(\theta_0+c)\} + 2\max\{|V_1(x)|:\,x\in[0,1]\}.\\
		\shortintertext{Similarly, for any $1$-periodic $\breve{V}\in\Lip(\mathbb{R})$, let}
	    \breve{R} &= J(c) + 2\max\{|\breve{V}(x)|:\,x\in[0,1]\}
	\end{align*}
	with $J(c)$ given by \eqref{eq:jay}. Set
	\[ R = R_1 + (d-1)\breve{R}, \] 
	and define $\breve{G}$ as in \eqref{eq:deftG}. Note that $\breve{G}(\pm c) = J(c) < R$.
	
	Recall the $1$-periodic functions $f_{\theta},\breve{f}_{\theta}\in\Con^1(\mathbb{R})$ satisfying \eqref{eq:mado}--\eqref{eq:ams} as well as the functions $v_{\theta},\breve{v}_{\theta}$ defined by \eqref{eq:vees}. It is easy to check that\footnote{See the proof of Proposition \ref{prop:joe} as well as Remark \ref{rem:AT} in Appendix \ref{app:1d}.}
	\[ \max_{\theta\in[\theta_0-c,\theta_0+c]}\max_{x\in[0,1]}G_1(f_\theta(x)) \le R_1\quad\text{and}\quad\max_{\theta\in[-c,c]}\max_{x\in[0,1]}\breve{G}(\breve{f}_{\theta}(x)) \le \breve{R}. \]
	Therefore, for every $\theta = (\theta_1,\theta_2,\ldots,\theta_d)\in[\theta_0-c,\theta_0+c]\times[-c,c]^{d-1}$ and $(t,x)\in[0,\infty)\times\mathbb{R}^d$, the function $u(t,x) := v_{\theta_1}(t,x_1)+\sum_{i=2}^d\breve{v}_{\theta_i}(t,x_i)$ satisfies
	\begin{equation}\label{eq:grad}
		D_xu\in S_R.
	\end{equation}

	For every $p\in S_R$ and $x\in\mathbb{R}^d$, let 
	\[ G_R(p)=G_1(p_1)+\sum_{i=2}^d\breve{G}(p_i)\quad\text{and}\quad V(x)=V_1(x_1) + \sum_{i=2}^d \breve{V}(x_i). \]
	Then, $G_R$ is quasiconvex by Lemma \ref{lem:qbox}. %and $V\in\Lip(\mathbb{R}^d)$.
	Extend $G_R$ to a quasiconvex and superlinear $G\in\Con^2(\mathbb{R}^d)$ so that the Hamiltonian $H$ defined by $H(p,x)=G(p)+V(x)$ satisfies \eqref{eq:Hkral1}--\eqref{eq:Hkral3}. It follows from the gradient bound \eqref{eq:grad} and our argument in Section \ref{ss:higher} that the effective Hamiltonian $\ol{H}$ in \eqref{eq:effHJint} arising from the homogenization of \eqref{eq:HJint} (with $\sigma = 1$) satisfies 
	\[ \ol{H}(\theta) = \ol{H}_1(\theta_1)+\sum_{i=2}^d \ol{{\mathcal H}}(\theta_i)\quad\text{for all}\ \theta\in[\theta_0-c,\theta_0+c]\times[-c,c]^{d-1}.\]
	Hence, $\ol{H}$ is not quasiconvex on the line segment $[\theta_0-c,\theta_0+c]\times\{0\}^{d-1}$ by \eqref{eq:failq}.
\end{proof}

\section*{Acknowledgments}

Atilla Y\i lmaz thanks Govind Menon and Kavita Ramanan for valuable discussions that took place during a visit to Brown University in May 2023.

\section*{Appendices}

\appendices

\section{Periodic homogenization of viscous HJ equations with superlinear Hamiltonians}\label{app:super}

Consider the HJ equation
\begin{equation}\label{eq:HJapp}
	\pd_tu = \Delta_x u + H(D_xu,x),\quad(t,x)\in(0,\infty)\times\mathbb{R}^d,
\end{equation}
which is \eqref{eq:HJint} with $\sigma = 1$ and $\epsilon = 1$. Assume that $H:\mathbb{R}^d\times\mathbb{R}^d\to\mathbb{R}$ satisfies the following conditions for some $\alpha_0,\beta_0$ and $\eta>1$:
\begin{align}
	&\alpha_0|p|^\eta - \frac1{\alpha_0} \le H(p,x) \le \beta_0(|p|^\eta + 1)\quad\text{for all $p,x\in\mathbb{R}^d$},\label{eq:Hkral1}\\
	&|H(p,x) - H(p,y)| \le \beta_0(|p|^\eta + 1)|x-y|\quad\text{for all $p,x,y\in\mathbb{R}^d$, and}\label{eq:Hkral2}\\
	&|H(p,x) - H(q,x)| \le \beta_0(|p| + |q| + 1)^{\eta - 1}|p-q|\quad\text{for all $p,q,x\in\mathbb{R}^d$}.\label{eq:Hkral3}
\end{align}
Assume in addition that, for every $p\in\mathbb{R}^d$, the mapping $x = (x_1,\ldots,x_d)\mapsto H(p,x)$ is $[0,1]^d$-periodic, i.e., it is $1$-periodic in $x_i$ for each $i=1,\ldots,d$.

To the best of our knowledge, the homogenization of \eqref{eq:HJapp} (i.e., of \eqref{eq:HJint} with $\sigma = 1$ and $\epsilon\downarrow 0$) under this set of assumptions is not explicitly stated anywhere in the literature. However, it follows by putting together various known results and estimates, as we show below for the sake of completeness.

By \cite[Theorem 2.8]{DK17}, the Cauchy problem for \eqref{eq:HJapp} is well-posed in $\UC([0,\infty)\times\mathbb{R}^d)$. For any $\theta\in\mathbb{R}^d$, let $u_\theta$ be the unique viscosity solution of \eqref{eq:HJapp} with the initial condition $u_\theta(0,x) = \theta\cdot x$. Again by \cite[Theorem 2.8]{DK17}, we know that $u_\theta\in\Lip([0,\infty)\times\mathbb{R}^d)$. Denote its Lipschitz constant by $\kappa(\theta)$. %It follows that $|D_xu_\theta| \le \kappa(\theta)$.

Define $\t{H}:\mathbb{R}^d\times\mathbb{R}^d\to\mathbb{R}$ by
\[ \t{H}(p,x) = H(p,x) \wedge \left(|p| + \beta_0(\kappa(\theta)^\eta + 1)\right). \]
Note that $\t{H}\in\Lip(\mathbb{R}^d\times\mathbb{R}^d)$, $\displaystyle{\lim_{|p|\to\infty}\t{H}(p,x) = \infty}$ (uniformly in $x\in\mathbb{R}^d$), the mapping $x\mapsto \t{H}(p,x)$ is $[0,1]^d$-periodic, and $\t{H} = H$ on $\ol{B}(0,\kappa(\theta))\times\mathbb{R}^d$, where $B(x_0,r)$ is the ball centered at $x_0$ and with radius $r > 0$.
Therefore, $u_\theta$ is a viscosity solution of 
\begin{equation}\label{eq:Hata}
	\pd_tu = \Delta_x u + \t{H}(D_xu,x),\quad(t,x)\in(0,\infty)\times\mathbb{R}^d.
\end{equation}

Let $\Con^{i,\gamma}(\mathbb{R}^d)$, $i=0,1,2$, denote the sets of functions in $\Con^i(\mathbb{R}^d)$ whose $i^{\text{th}}$-order
derivatives are H\"{o}lder continuous with H\"{o}lder exponent $\gamma>0$. By \cite[Lemma 4.1]{Evans92}, there is a unique $\lambda(\theta)\in\mathbb{R}$ and a $[0,1]^d$-periodic $F_\theta\in\Con^{1,\gamma}(\mathbb{R}^d)$ that solves
\[ \Delta F_\theta + \t{H}(\theta + DF_\theta,x) = \lambda(\theta),\quad x\in\mathbb{R}^d, \]
in the viscosity sense. Let $K_\theta = \max\{|F_\theta(x)|:\,x\in[0,1]^d\}$, and define $\ul{v}_\theta$ and $\ol{v}_\theta$ by
\[ \ul{v}_\theta(t,x) = \lambda(\theta)t + \theta\cdot x + F_\theta(x) - K_\theta\quad\text{and}\quad \ol{v}_\theta(t,x) = \lambda(\theta)t + \theta\cdot x + F_\theta(x) + K_\theta, \]
respectively, for all $(t,x)\in[0,\infty)\times\mathbb{R}^d$. Note that $\ul{v}_\theta$ and $\ol{v}_\theta$ are viscosity solutions of \eqref{eq:Hata}. Moreover,
\[ \ul{v}_\theta(0,x) \le u_\theta(0,x) = \theta\cdot x \le \ol{v}_\theta(0,x) \]
for all $x\in\mathbb{R}^d$. By the comparison principle (see, e.g., \cite[Proposition 1.4]{D19}),
\[ \ul{v}_\theta(t,x) \le u_\theta(t,x) \le \ol{v}_\theta(t,x) \]
for all $(t,x)\in[0,\infty)\times\mathbb{R}^d$. Therefore,
\[ \lambda(\theta) = \lim_{t\to\infty}\frac{\ul{v}_\theta(t,0)}{t} \le \liminf_{t\to\infty}\frac{u_\theta(t,0)}{t} \le \limsup_{t\to\infty}\frac{u_\theta(t,0)}{t} \le \lim_{t\to\infty}\frac{\ol{v}_\theta(t,0)}{t} = \lambda(\theta). \]
By \cite[Theorem 4.1]{DK17}, we conclude that \eqref{eq:HJint} (with $\sigma = 1$) homogenizes, and the effective Hamiltonian $\ol{H}$ in \eqref{eq:effHJint} is given by $\ol{H}(\theta) = \lambda(\theta)$ for all $\theta\in\mathbb{R}^d$.

\section{Regularity of correctors}\label{app:class}

Assume that $H\in\Lip(\mathbb{R}^d\times\mathbb{R}^d)$, $\displaystyle{\lim_{|p|\to\infty}H(p,x) = \infty}$ (uniformly in $x\in\mathbb{R}^d$), and the mapping $x\mapsto H(p,x)$ is $[0,1]^d$-periodic. As we have already stated in Appendix \ref{app:super}, by \cite[Lemma 4.1]{Evans92}, for each $\theta\in\mathbb{R}^d$, there exists a unique $\ol{H}(\theta)\in\mathbb{R}$ for which the static HJ equation
\[ \Delta F + H(\theta + DF,x) = \ol{H}(\theta),\quad x\in\mathbb{R}^d, \]
has a $[0,1]^d$-periodic viscosity solution $F_\theta\in\Con^{1,\gamma}(\mathbb{R}^d)$ for some $\gamma>0$. In this appendix, we obtain some bounds on $\ol{H}(\theta)$ and show that $F_\theta\in\Con^{2,\gamma}(\mathbb{R}^d)$. These results are well known, and we include their proofs here for the sake of completeness.

Let
\begin{equation}\label{eq:sinir}
	L(\theta) = \min\{ H(\theta,x):\, x\in[0,1]^d\}\quad\text{and}\quad U(\theta) = \max\{ H(\theta,x):\, x\in[0,1]^d\}.
\end{equation}

\begin{proposition}\label{prop:isguc}
	$L(\theta) \le \ol{H}(\theta) \le U(\theta)$.
\end{proposition}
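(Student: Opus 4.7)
The plan is to exploit the periodicity of $F_\theta$, which guarantees that it attains both its maximum and its minimum on $\mathbb{R}^d$, and then invoke the viscosity sub/supersolution condition at those extremal points using constant test functions. Since $F_\theta\in\Con^{1,\gamma}(\mathbb{R}^d)$ is $[0,1]^d$-periodic, I can choose $x_0,x_1\in[0,1]^d$ where $F_\theta$ attains its global maximum and minimum, respectively.

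For the upper bound, I would test against the constant function $\phi_0\equiv F_\theta(x_0)$, which is admissible because $\phi_0\in\Con^2(\mathbb{R}^d)$ and $F_\theta-\phi_0$ attains a (global, hence local) maximum at $x_0$. Rewriting the static equation in the proper form $-\Delta F-H(\theta+DF,x)+\ol{H}(\theta)=0$, the viscosity subsolution property at $x_0$ gives
\[
-\Delta\phi_0(x_0)-H(\theta+D\phi_0(x_0),x_0)+\ol{H}(\theta)\le 0.
\]
Since $D\phi_0\equiv 0$ and $\Delta\phi_0\equiv 0$, this collapses to $\ol{H}(\theta)\le H(\theta,x_0)\le U(\theta)$. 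The lower bound is obtained symmetrically: take $\phi_1\equiv F_\theta(x_1)$, apply the viscosity supersolution property at the minimum point $x_1$, and conclude $\ol{H}(\theta)\ge H(\theta,x_1)\ge L(\theta)$.

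The only point requiring care is the sign convention in the viscosity definition: one must write the equation in the ``proper'' form (non-increasing in $D^2u$) so that the subsolution condition applies at maxima of $u-\phi$, and then track the signs to confirm that the subsolution inequality at a maximum of $F_\theta$ indeed upper-bounds $\ol{H}(\theta)$ by $H(\theta,x_0)$, and dually for the supersolution at a minimum. This is mechanical once the conventions are fixed, so I do not expect any genuine obstacle — the proposition is essentially the viscosity-theoretic analogue of the classical maximum principle for the cell problem, and the argument has the same two-line structure.
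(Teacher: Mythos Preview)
Your proposal is correct and matches the paper's own proof almost verbatim: the paper also picks a maximizing point $x_0\in[0,1]^d$ of the periodic corrector $F_\theta$, tests against the constant function $\varphi\equiv F_\theta(x_0)$, and reads off $\ol{H}(\theta)\le H(\theta,x_0)\le U(\theta)$ from the viscosity subsolution inequality, then states that the lower bound is proved similarly. Your extra remarks on sign conventions are fine but not needed.
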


\begin{proof}
	Since $F_\theta$ is continuous and $[0,1]^d$-periodic, it is maximized at some $x_0\in[0,1]^d$. Define $\varphi$ by setting $\varphi(x) = F_\theta(x_0)$ for all $x\in\mathbb{R}^d$. By the definition of viscosity subsolutions,
	\[ \ol{H}(\theta) \le \Delta\varphi + H(\theta + D\varphi(x_0),x_0) = H(\theta,x_0) \le U(\theta). \]
	The other inequality is proved similarly.
\end{proof}

\begin{proposition}
	$F_\theta\in\Con^{2,\gamma}(\mathbb{R}^d)$.
\end{proposition}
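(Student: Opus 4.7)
The plan is a standard bootstrap: use the already-established $C^{1,\gamma}$ regularity of $F_\theta$ to view the corrector equation as a Poisson equation with a H\"older right-hand side, and then invoke classical Schauder theory.

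First I would define $g(x) := \ol{H}(\theta) - H(\theta + DF_\theta(x), x)$ for $x \in \mathbb{R}^d$ and show that $g \in \Con^{0,\gamma}(\mathbb{R}^d)$. Since $H \in \Lip(\mathbb{R}^d \times \mathbb{R}^d)$ and $F_\theta \in \Con^{1,\gamma}(\mathbb{R}^d)$, for all $x,y \in \mathbb{R}^d$ we have
\[ |g(x)-g(y)| \le \Lip(H)\bigl(|DF_\theta(x) - DF_\theta(y)| + |x-y|\bigr) \le C\bigl(|x-y|^\gamma + |x-y|\bigr), \]
which (on bounded sets, hence globally by $[0,1]^d$-periodicity) gives $g \in \Con^{0,\gamma}(\mathbb{R}^d)$.

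Next I would observe that $F_\theta$ is a (periodic, continuous) viscosity solution of the Poisson equation
\[ \Delta F_\theta = g(x), \quad x \in \mathbb{R}^d. \]
Since the Laplacian has constant coefficients and is uniformly elliptic, and since $g$ is H\"older continuous, classical interior Schauder regularity for viscosity solutions of the Poisson equation (see, e.g., any standard reference on elliptic PDEs, or Caffarelli--Cabr\'e) applies on any ball: every viscosity solution of $\Delta u = g$ with $g \in \Con^{0,\gamma}$ belongs to $\Con^{2,\gamma}_{\mathrm{loc}}$ and solves the equation in the classical sense. Combined with the periodicity of $F_\theta$, this yields $F_\theta \in \Con^{2,\gamma}(\mathbb{R}^d)$.

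The only mildly delicate point is the passage from ``viscosity solution of $\Delta u = g$'' to ``classical $\Con^{2,\gamma}$ solution.'' One way to make this airtight is to solve, on each ball $B(x_0,1)$, the Dirichlet problem $\Delta w = g$ in $B(x_0,1)$ with $w = F_\theta$ on $\pd B(x_0,1)$; standard Schauder theory gives a unique classical solution $w \in \Con^{2,\gamma}(\ol{B(x_0,1)})$, and the comparison principle for viscosity solutions of the Poisson equation forces $w = F_\theta$ on $B(x_0,1)$. Covering $[0,1]^d$ by finitely many such balls and using periodicity then gives the global conclusion $F_\theta \in \Con^{2,\gamma}(\mathbb{R}^d)$.
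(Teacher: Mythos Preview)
Your proposal is correct and essentially identical to the paper's proof: define $g(x)=\ol{H}(\theta)-H(\theta+DF_\theta(x),x)\in\Con^{0,\gamma}$, view the corrector equation as the Poisson equation $\Delta F_\theta=g$, solve the Dirichlet problem on each ball classically via Schauder theory, and use uniqueness for viscosity solutions (the paper phrases this as ``$w=\ol{F}_\theta-F_\theta$ is a viscosity solution of $\Delta w=0$ with zero boundary data, hence $w\equiv0$'') to identify the classical solution with $F_\theta$. The paper cites \cite{GT_book} for the Schauder step and \cite{Is95} for the uniqueness step, but otherwise your argument matches it line for line.
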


\begin{proof}
	For every $x_0\in\mathbb{R}^d$ and $r>0$, consider the Dirichlet problem
	\begin{align}
		&\Delta F + H(\theta + DF_\theta(x),x) = \ol{H}(\theta),\quad x\in B(x_0,r),\label{eq:Poisson}\\
		&\text{$F(x) = F_\theta(x)$,\quad $x\in\partial B(x_0,r)$}\label{eq:boundary}.
	\end{align}
	%where $B(x_0,r)$ is the ball centered at $x_0$ and with radius $r > 0$.
	It follows from the definition of viscosity solutions that $F_\theta$ is a viscosity solution of \eqref{eq:Poisson}--\eqref{eq:boundary}. We can write \eqref{eq:Poisson} as $\Delta F = h_\theta$, where
	\[ h_\theta(x) = -H(\theta + DF_\theta(x),x) + \ol{H}(\theta).\]
	Since $h_\theta\in\Con^{0,\gamma}(\mathbb{R}^d)$, \eqref{eq:Poisson}--\eqref{eq:boundary} has a unique classical solution $\ol{F}_\theta\in\Con^{2,\gamma}(B(x_0,r))$, see \cite[Chapter 4]{GT_book}. Note that $w := \ol{F}_\theta - F_\theta\in\Con^{1,\gamma}(B(x_0,r))$ is a viscosity solution of $\Delta w = 0$ on $B(x_0,r)$, and $w = 0$ on $\partial B(x_0,r)$, so, in fact, $w = 0$ (by, e.g., \cite{Is95}) and $\ol{F}_\theta = F_\theta$ on $B(x_0,r)$. Since $x_0$ is arbitrary, we have the desired result.
\end{proof}

\section{Correctors in one dimension}\label{app:1d}

Assume that $H\in\Con(\mathbb{R}\times\mathbb{R})$,
\begin{align}
	&\lim_{p\to\pm\infty}H(p,x) = \infty\ \text{(uniformly in $x\in\mathbb{R}$), and}\label{eq:coer}\\
	&\text{the mapping $x\mapsto H(p,x)$ is $1$-periodic.}\label{eq:per}
\end{align}
For every $\theta\in\mathbb{R}$, let
\begin{align*}
	p_-(\theta) &= \min_{x\in[0,1]}\min\{p \in\mathbb{R}:\, H(p,x) \le U(\theta)\}\quad\text{and}\\
	p_+(\theta) &= \max_{x\in[0,1]}\max\{p \in\mathbb{R}:\, H(p,x) \le U(\theta)\},
\end{align*}
where $U(\theta) = \max\{H(\theta,x):\,x\in[0,1]\}$ as in \eqref{eq:sinir}. Note that $-\infty < p_-(\theta) \le \theta \le p_+(\theta) < \infty$.

\begin{proposition}\label{prop:joe}
	If \eqref{eq:coer}--\eqref{eq:per} are satisfied and $H\in\Lip(\mathbb{R}\times\mathbb{R})$, then for each $\theta\in\mathbb{R}$, there exists a unique $\ol{H}(\theta)\in\mathbb{R}$ and a $1$-periodic $F_\theta\in\Con^2(\mathbb{R})$ such that
	\begin{equation}\label{eq:uyku}
		F_\theta''(x) + H(\theta + F_\theta'(x),x) = \ol{H}(\theta)\quad\text{for all}\ x\in\mathbb{R}.
	\end{equation}
	Moreover,
	\begin{equation}\label{eq:nap}
		p_-(\theta) \le \theta + F_\theta'(x) \le p_+(\theta)\quad\text{for all}\ x\in\mathbb{R}.
	\end{equation}
\end{proposition}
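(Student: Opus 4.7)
The plan is to assemble the proposition from three ingredients already in place in the paper: the corrector theorem of Evans to get existence and uniqueness of $(\ol H(\theta),F_\theta)$; the elliptic bootstrap of Appendix \ref{app:class} to upgrade $F_\theta$ to $\Con^2$; and a peak argument on the 1-periodic function $F_\theta'$, combined with Proposition \ref{prop:isguc}, to extract the pointwise gradient bound \eqref{eq:nap}. Under \eqref{eq:coer}--\eqref{eq:per} and $H\in\Lip(\mathbb{R}\times\mathbb{R})$, the hypotheses of \cite[Lemma 4.1]{Evans92} are met, so I would invoke that result (exactly as in Appendices \ref{app:super} and \ref{app:class}) to obtain a unique $\ol H(\theta)\in\mathbb{R}$ and a 1-periodic viscosity solution $F_\theta\in\Con^{1,\gamma}(\mathbb{R})$ of \eqref{eq:uyku} for some $\gamma\in(0,1)$. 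Uniqueness of $\ol H(\theta)$ is part of that lemma, so nothing extra is required on that front.

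For the $\Con^2$ regularity, I would reproduce the argument already given in Appendix \ref{app:class} word-for-word in the one-dimensional setting: the function
\[
   h_\theta(x) \;:=\; \ol H(\theta) - H(\theta+F_\theta'(x),x)
\]
belongs to $\Con^{0,\gamma}(\mathbb{R})$ because $H$ is Lipschitz and $F_\theta'\in\Con^{0,\gamma}(\mathbb{R})$; on any bounded interval $I$, the Dirichlet problem for $F''=h_\theta$ with boundary data $F_\theta$ then has a unique classical solution $\ol F_\theta\in\Con^{2,\gamma}(I)$ by standard Schauder theory, and comparison forces $\ol F_\theta\equiv F_\theta$ on $I$. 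Covering $\mathbb{R}$ by such intervals gives $F_\theta\in\Con^{2,\gamma}(\mathbb{R})$, and 1-periodicity is preserved.

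With $F_\theta\in\Con^2(\mathbb{R})$, the derivative $F_\theta'$ is $\Con^1$ and 1-periodic, so it attains its maximum and minimum on $[0,1]$ at points $x_+$ and $x_-$ at which $F_\theta''(x_\pm)=0$. Evaluating \eqref{eq:uyku} at these points yields
\[
   H(\theta+F_\theta'(x_\pm),\,x_\pm) \;=\; \ol H(\theta) \;\le\; U(\theta),
\]
where the inequality is Proposition \ref{prop:isguc}. Hence $\theta+F_\theta'(x_+)$ lies in $\{p\in\mathbb{R}:\,H(p,x_+)\le U(\theta)\}$ and therefore satisfies $\theta+F_\theta'(x_+)\le\max\{p:\,H(p,x_+)\le U(\theta)\}\le p_+(\theta)$; symmetrically, $\theta+F_\theta'(x_-)\ge p_-(\theta)$. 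The extremal choice of $x_\pm$ then promotes these to the pointwise bound \eqref{eq:nap} on all of $\mathbb{R}$.

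The only delicate point, and the one I would be most careful about, is making sure the extremum step is applied to a genuinely classical $\Con^2$ function rather than to a viscosity solution. This is exactly why I would secure the $\Con^2$ regularity globally \emph{before} running the peak argument; once that is done, the remaining work is purely at the level of elementary calculus and the cited bound from Proposition \ref{prop:isguc}. I do not expect any substantial obstacle beyond keeping these steps in the right order.
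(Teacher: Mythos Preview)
Your proposal is correct and follows essentially the same approach as the paper: the paper invokes Appendix \ref{app:class} (which is precisely Evans's corrector lemma plus the elliptic bootstrap to $\Con^{2,\gamma}$) for the first assertion, and then runs the identical peak argument on the $\Con^1$, $1$-periodic function $F_\theta'$, using Proposition \ref{prop:isguc} at the extremal points, to obtain \eqref{eq:nap}. Your care about securing $\Con^2$ regularity before applying the extremum argument matches the paper's logic exactly.
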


\begin{proof}
	The first assertion follows from Appendix \ref{app:class}.
	
	We prove the second assertion. Since $F_\theta'\in\Con^1(\mathbb{R})$ is $1$-periodic, it is maximized at some $x_1\in[0,1]$, $F_\theta''(x_1) = 0$, and
	\[ H(\theta + F_\theta'(x_1),x_1) = \ol{H}(\theta) \le U(\theta) \]
	by Proposition \ref{prop:isguc}. %and $F_\theta'(x_1) \ge 0$.
	Therefore, $\theta + F_\theta'(x_1) \le p_+(\theta)$. Similarly, $F_\theta'$ is minimized at some $x_2\in[0,1]$, $F_\theta''(x_2) = 0$, and
	\[ H(\theta + F_\theta'(x_2),x_2) = \ol{H}(\theta) \le U(\theta) \]
	by Proposition \ref{prop:isguc}. %and $F_\theta'(x_2) \le 0$.
	Therefore, $\theta + F_\theta'(x_2) \ge p_-(\theta)$.
\end{proof}

The assertions in Proposition \ref{prop:joe} remain true if we replace the assumption $H\in\Lip(\mathbb{R}\times\mathbb{R})$ with a weaker one, as we state and prove below.

\begin{proposition}\label{prop:tax}
	If \eqref{eq:coer}--\eqref{eq:per} are satisfied and $H\in\Lip([-B,B]\times\mathbb{R})$ for all $B>0$, then for each $\theta\in\mathbb{R}$, there exists a unique $\ol{H}(\theta)\in\mathbb{R}$ and a $1$-periodic $F_\theta\in\Con^2(\mathbb{R})$ such that \eqref{eq:uyku} holds. Moreover, we have \eqref{eq:nap}.
\end{proposition}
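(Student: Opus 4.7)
The plan is to reduce Proposition \ref{prop:tax} to Proposition \ref{prop:joe} via a truncation argument in the momentum variable, exploiting coercivity to ensure that the truncation is invisible to the corrector. Fix $\theta\in\mathbb{R}$ and use the coercivity assumption \eqref{eq:coer} to choose $B>|\theta|$ large enough so that $H(p,x)>U(\theta)+1$ for every $x\in[0,1]$ and every $|p|\ge B$; this is possible because $U(\theta)<\infty$ and $H(\,\cdot\,,x)\to\infty$ uniformly in $x$. Let $C$ denote the Lipschitz constant of $H$ on $[-B,B]\times\mathbb{R}$, which is finite by the standing assumption, and define a modified Hamiltonian by
\begin{equation*}
\tilde H(p,x)=\begin{cases} H(p,x), & |p|\le B,\\ H(B,x)+C(p-B), & p>B,\\ H(-B,x)-C(p+B), & p<-B.\end{cases}
\end{equation*}
By inspection, $\tilde H$ agrees with $H$ on $[-B,B]\times\mathbb{R}$, is $1$-periodic in $x$, is jointly Lipschitz on $\mathbb{R}\times\mathbb{R}$ with constant at most $C$, and satisfies $\tilde H(p,x)\to\infty$ as $|p|\to\infty$ uniformly in $x$ because $H(\pm B,\,\cdot\,)$ is bounded on $[0,1]$.

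Applying Proposition \ref{prop:joe} to $\tilde H$ yields a unique $\tilde{\overline H}(\theta)\in\mathbb{R}$ and a $1$-periodic $F_\theta\in\Con^2(\mathbb{R})$ solving
\begin{equation*}
F_\theta''(x)+\tilde H(\theta+F_\theta'(x),x)=\tilde{\overline H}(\theta),\qquad x\in\mathbb{R},
\end{equation*}
together with the bound $\tilde p_-(\theta)\le\theta+F_\theta'(x)\le\tilde p_+(\theta)$, where $\tilde p_\pm$ are defined via $\tilde H$. Since $|\theta|<B$, we have $\tilde H(\theta,\,\cdot\,)=H(\theta,\,\cdot\,)$ and hence $\tilde U(\theta)=U(\theta)$. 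For $p>B$, the construction gives $\tilde H(p,x)\ge H(B,x)>U(\theta)$, and symmetrically for $p<-B$; consequently $[\tilde p_-(\theta),\tilde p_+(\theta)]\subset[-B,B]$, so $\theta+F_\theta'(x)\in[-B,B]$ for every $x$. On this range $\tilde H=H$, so $F_\theta$ already solves \eqref{eq:uyku} with $\overline H(\theta):=\tilde{\overline H}(\theta)$. The bound \eqref{eq:nap} then follows by the same maximum/minimum argument used in the proof of Proposition \ref{prop:joe}: at critical points of $F_\theta'$, the term $F_\theta''$ vanishes and Proposition \ref{prop:isguc} gives $H(\theta+F_\theta'(x_{\min/\max}),x_{\min/\max})=\overline H(\theta)\le U(\theta)$.

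For uniqueness of $\overline H(\theta)$, suppose $(\lambda_i,F_i)$, $i=1,2$, are two such pairs. Each $F_i'$ is continuous and $1$-periodic, hence uniformly bounded, so we may enlarge $B$ to some $B'$ dominating both $|\theta+F_1'|$ and $|\theta+F_2'|$ as well as the original $B$. Performing the same linear extension with this $B'$ produces a Lipschitz $\tilde H'$ for which both $F_i$ satisfy $F_i''+\tilde H'(\theta+F_i',x)=\lambda_i$, and uniqueness of $\overline{\tilde H'}(\theta)$ from Proposition \ref{prop:joe} forces $\lambda_1=\lambda_2$. The only point requiring any care is the check that the piecewise extension $\tilde H$ is genuinely jointly Lipschitz and uniformly coercive in $x$; this is the main (and only mildly delicate) step, and it is verified directly from the choice of the slope $C$ and the boundedness of $H(\pm B,\,\cdot\,)$ on the period cell.
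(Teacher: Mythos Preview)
Your proposal is correct and follows essentially the same truncation-in-$p$ strategy as the paper: modify $H$ outside a compact $p$-range so that the result is globally Lipschitz and coercive, apply Proposition \ref{prop:joe}, and use the a priori bound \eqref{eq:nap} to see that the corrector never leaves the region where the modification agrees with $H$. The only cosmetic difference is that the paper first caps $H$ at the level $\tilde U(\theta)$ before extending linearly with slope $\pm 1$, whereas you extend linearly with the local Lipschitz slope $C$; both yield a globally Lipschitz $\tilde H$ and the arguments for existence, the derivative bound, and uniqueness are otherwise identical.
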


\begin{proof}
	For every $\theta\in\mathbb{R}$, let
	\begin{align*}
		\t{U}(\theta) &= \max\{H(p,x):\, p\in[p_-(\theta),p_+(\theta)], x\in[0,1]\} + 1,\\
		\t{p}_-(\theta) &= \min_{x\in[0,1]}\min\{p \in\mathbb{R}:\, H(p,x) \le \t{U}(\theta)\}\quad\text{and}\\
		\t{p}_+(\theta) &= \max_{x\in[0,1]}\max\{p \in\mathbb{R}:\, H(p,x) \le \t{U}(\theta)\}.
	\end{align*}
	Note that $U(\theta) < U(\theta) + 1 \le \t{U}(\theta) \le H(\t{p}_\pm(\theta),x)$ for all $x\in\mathbb{R}$, and $[p_-(\theta),p_+(\theta)] \subset [\t{p}_-(\theta),\t{p}_+(\theta)]$.
	
	Define $\t{H}:\mathbb{R}\times\mathbb{R}\to\mathbb{R}$ by setting
	\[ \t{H}(p,x) = \begin{cases}
		H(p,x)\wedge \t{U}(\theta)&\text{if}\ p\in[\t{p}_-(\theta),\t{p}_+(\theta)],\\
		\t{U}(\theta) - (p - \t{p}_-(\theta))&\text{if}\ p \in(-\infty,\t{p}_-(\theta)),\\
		\t{U}(\theta) + (p - \t{p}_+(\theta))&\text{if}\ p \in(\t{p}_+(\theta),\infty).\\
	\end{cases}
	\]
	It follows that $\t{H}$ satisfies \eqref{eq:coer}--\eqref{eq:per}, and $\t{H}\in\Lip(\mathbb{R}\times\mathbb{R})$. Moreover,
	\begin{equation}\label{eq:inten}
		\t{H}(p,x) = H(p,x) < \t{U}(\theta)\quad\text{for all $p\in[p_-(\theta),p_+(\theta)]$ and $x\in\mathbb{R}$}.
	\end{equation}
	In particular, $\t{H}(\theta,x) = H(\theta,x)$ for all $x\in\mathbb{R}$, and
	\[ \max\{ \t{H}(\theta,x):\, x\in[0,1]\} = \max\{ H(\theta,x):\, x\in[0,1]\} = U(\theta). \]
	Furthermore, $\{p \in\mathbb{R}:\, \t{H}(p,x) \le U(\theta)\} = \{p \in\mathbb{R}:\, H(p,x) \le U(\theta)\}$ for all $x\in\mathbb{R}$, which gives
	\begin{align*}
			\min_{x\in[0,1]}\min\{p \in\mathbb{R}:\, \t{H}(p,x) \le U(\theta)\} &= p_-(\theta)\quad\text{and}\\
			\max_{x\in[0,1]}\max\{p \in\mathbb{R}:\, \t{H}(p,x) \le U(\theta)\} &= p_+(\theta).
	\end{align*}
	
	By Proposition \ref{prop:joe} (applied to $\t{H}$), there exists a unique $\ol{H}(\theta)\in\mathbb{R}$ and a $1$-periodic $F_\theta\in\Con^2(\mathbb{R})$ such that 
	\begin{equation}\label{eq:power}
		F_\theta''(x) + \t{H}(\theta + F_\theta'(x),x) = \ol{H}(\theta)\quad\text{for all}\ x\in\mathbb{R}.
	\end{equation}
	Moreover, we have \eqref{eq:nap}. Recalling \eqref{eq:inten}, we deduce that \eqref{eq:uyku} holds.
	
	It remains to prove the uniqueness of $\ol{H}(\theta)\in\mathbb{R}$. Take any $\ol{H}(\theta)\in\mathbb{R}$ and $1$-periodic $F_\theta\in\Con^2(\mathbb{R})$ such that \eqref{eq:uyku} holds. Then, we have \eqref{eq:nap} by the argument in the proof of Proposition \ref{prop:joe}. Using \eqref{eq:inten} again, we deduce that \eqref{eq:power} holds, and $\ol{H}(\theta)$ is unique by Proposition \ref{prop:joe} (applied to $\t{H}$).
\end{proof}

\begin{remark}\label{rem:AT}
	The argument that we have given above (in the proof of Proposition \ref{prop:joe}) for the bounds in \eqref{eq:nap} is one-dimensional and it only uses \eqref{eq:coer}--\eqref{eq:per}. For $d \ge 2$, such Lipschitz estimates on $F_\theta$ require a superlinear growth condition on $H$ such as \eqref{eq:Hkral1}, see \cite{AT}.
\end{remark}

\section{Proofs of Lemmas \ref{lem:order} and \ref{lem:bounds}}\label{app:lemmas}

\begin{proof}[Proof of Lemma \ref{lem:order}]
	Take any $\theta_1 < \theta_2$, and let $f_1 = f_{\theta_1}$ and $f_2 = f_{\theta_2}$. Since $\int_0^1f_i(x)dx = \theta_i$ for $i=1,2$, there exists an $x_o\in[0,1]$ such that $f_1(x_0) < f_2(x_0)$. Therefore, it suffices to prove that $f_1(x) \ne f_2(x)$ for all $x\in\mathbb{R}$. There are three possibilities:
	\begin{itemize}
		\item [(i)] Suppose $\ol{H}(\theta_1) = \ol{H}(\theta_2)$. If $f_1(x) = f_2(x)$ for some $x\in\mathbb{R}$, then $f_1(y) = f_2(y)$ for all $y \ge x$ by the uniqueness of solutions of \eqref{eq:ODE}, which is a contradiction since $f_1,f_2$ are $1$-periodic and $f_1(x_0) < f_2(x_0)$.
		\item [(ii)] Suppose $\ol{H}(\theta_1) < \ol{H}(\theta_2)$. Let
		\[ y = \inf\{x > x_0:\,f_1(x) \ge f_2(x)\}\in(x_0,\infty]. \]
		If $y<\infty$, then $f_1(y) = f_2(y)$ and $(f_2 - f_1)'(y) = \ol{H}(\theta_2) - \ol{H}(\theta_1) > 0$ by \eqref{eq:ODE}, so $f_1(y-\delta) > f_2(y-\delta)$ for sufficiently small $\delta>0$, which is a contradiction. Hence, $f_1(x) < f_2(x)$ for all $x>x_0$. Since $f_1,f_2$ are $1$-periodic, we conclude that $f_1(x) < f_2(x)$ for all $x\in\mathbb{R}$.
		\item [(iii)] Suppose $\ol{H}(\theta_1) > \ol{H}(\theta_2)$. Let
		\[ y = \sup\{x < x_0:\,f_1(x) \ge f_2(x)\}\in[-\infty,x_0). \]
		If $y>-\infty$, then $f_1(y) = f_2(y)$ and $(f_2 - f_1)'(y) = \ol{H}(\theta_2) - \ol{H}(\theta_1) < 0$ by \eqref{eq:ODE}, so $f_1(y+\delta) > f_2(y+\delta)$ for sufficiently small $\delta>0$, which is a contradiction. Hence, $f_1(x) < f_2(x)$ for all $x<x_0$. Since $f_1,f_2$ are $1$-periodic, we conclude that $f_1(x) < f_2(x)$ for all $x\in\mathbb{R}$.\qedhere
		%\item [(iii)] Suppose $\ol{H}(\theta_1) > \ol{H}(\theta_2)$. If $f_1(x) = f_2(x)$ for some $x>x_0$, then $(f_2 - f_1)'(x) = \ol{H}(\theta_2) - \ol{H}(\theta_1) < 0$ by \eqref{eq:ODE}, so $f_1(x + \delta) > f_2(x + \delta)$ for sufficiently small $\delta>0$. Let
		%\[ z = \inf\{y > x+\delta:\,f_1(y) \le f_2(y)\}\in(x+\delta,\infty]. \]
		%If $z<\infty$, then $f_1(z) = f_2(z)$ and $(f_2 - f_1)'(z) = \ol{H}(\theta_2) - \ol{H}(\theta_1) < 0$ by \eqref{eq:ODE}, so $f_1(z-\delta') < f_2(z-\delta')$ for sufficiently small $\delta'>0$, which is a contradiction. Hence, $f_1(y) > f_2(y)$ for all $y>x+\delta$. However, this is a contradiction, too, since $f_1,f_2$ are $1$-periodic and $f_1(x_0) < f_2(x_0)$.
	\end{itemize}
\end{proof}

\begin{proof}[Proof of Lemma \ref{lem:bounds}]
	We will write $K = K_1(\theta_1,\theta_2)$ for the sake of convenience. Note that
	\begin{equation}\label{eq:diffODE}
		(g_{\theta_1,\theta_2})'(x) = \left(f_{\theta_2} - f_{\theta_1}\right)'(x) = G(f_{\theta_1}(x)) - G(f_{\theta_2}(x)) + \ol{H}(\theta_2) - \ol{H}(\theta_1)
	\end{equation}
	by \eqref{eq:ODE}. We divide the proof into two cases.
	
	\smallskip
	
	\textbf{Case 1:} If $\ol{H}(\theta_2) - \ol{H}(\theta_1) \le 0$, then $(g_{\theta_1,\theta_2})'(x) \le K g_{\theta_1,\theta_2}(x)$ for all $x\in[0,1)$ by \eqref{eq:diffODE} and the mean value theorem. Take any $x_1,x_2\in[0,1)$. \begin{itemize}
		\item [(i)] If $0 \le x_1 < x_2 < 1$, then
		\[ \log\left(\frac{g_{\theta_1,\theta_2}(x_2)}{g_{\theta_1,\theta_2}(x_1)}\right) \le K(x_2 - x_1) \le K. \]
		\item [(ii)] If $0\le x_2 < x_1 < 1$, then $x_1 < x_2 + 1 < x_1 + 1$, and
		\[ \log\left(\frac{g_{\theta_1,\theta_2}(x_2)}{g_{\theta_1,\theta_2}(x_1)}\right) = \log\left(\frac{g_{\theta_1,\theta_2}(x_2 + 1)}{g_{\theta_1,\theta_2}(x_1)}\right) \le K(x_2 + 1 - x_1) \le K \]
		since $g_{\theta_1,\theta_2}$ is $1$-periodic.
	\end{itemize}
	
	\smallskip
	
	\textbf{Case 2:} If $\ol{H}(\theta_2) - \ol{H}(\theta_1) > 0$, then $(g_{\theta_1,\theta_2})'(x) \ge -K g_{\theta_1,\theta_2}(x)$ for all $x\in[0,1)$ by \eqref{eq:diffODE} and the mean value theorem. Take any $x_1,x_2\in[0,1)$.
	\begin{itemize}
		\item [(i)] If $0 \le x_1 < x_2 < 1$, then
		\[ \log\left(\frac{g_{\theta_1,\theta_2}(x_2)}{g_{\theta_1,\theta_2}(x_1)}\right) \ge -K(x_2 - x_1) \ge -K. \]
		\item [(ii)] If $0\le x_2 < x_1 < 1$, then $x_1 < x_2 + 1 < x_1 + 1$ and
		\[ \log\left(\frac{g_{\theta_1,\theta_2}(x_2)}{g_{\theta_1,\theta_2}(x_1)}\right) = \log\left(\frac{g_{\theta_1,\theta_2}(x_2 + 1)}{g_{\theta_1,\theta_2}(x_1)}\right) \ge -K(x_2 + 1 - x_1) \ge -K \]
		since $g_{\theta_1,\theta_2}$ is $1$-periodic.
	\end{itemize}
	
	\smallskip
	
	We have proved that
	\[ \log\left(\frac{g_{\theta_1,\theta_2}(x_2)}{g_{\theta_1,\theta_2}(x_1)}\right) \le K_1(\theta_1,\theta_2) \]
	for every $x_1,x_2\in\mathbb{R}$. Finally, note that $\int_0^1g_{\theta_1,\theta_2}(x)dx = \theta_2 - \theta_1$. Therefore, $g_{\theta_1,\theta_2}(x) = \theta_2 - \theta_1$ for some $x\in\mathbb{R}$ by the mean value theorem for integrals, and \eqref{eq:bounds} follows.
\end{proof}

\bibliography{viscousHJ}

\begin{thebibliography}{CIPP98}

\bibitem[AS13]{AS}
Scott~N. Armstrong and Panagiotis~E. Souganidis.
\newblock Stochastic homogenization of level-set convex {H}amilton-{J}acobi
  equations.
\newblock {\em Int. Math. Res. Not. IMRN}, 2013(15):3420--3449, 2013.

\bibitem[AT14]{AT14}
Scott~N. Armstrong and Hung~V. Tran.
\newblock Stochastic homogenization of viscous {H}amilton-{J}acobi equations
  and applications.
\newblock {\em Anal. PDE}, 7(8):1969--2007, 2014.

\bibitem[AT15]{AT}
Scott~N. Armstrong and Hung~V. Tran.
\newblock Viscosity solutions of general viscous {H}amilton-{J}acobi equations.
\newblock {\em Math. Ann.}, 361(3-4):647--687, 2015.

\bibitem[ATY15]{ATY_nonconvex}
Scott~N. Armstrong, Hung~V. Tran, and Yifeng Yu.
\newblock Stochastic homogenization of a nonconvex {H}amilton-{J}acobi
  equation.
\newblock {\em Calc. Var. Partial Differential Equations}, 54(2):1507--1524,
  2015.

\bibitem[ATY16]{ATY_1d}
Scott~N. Armstrong, Hung~V. Tran, and Yifeng Yu.
\newblock Stochastic homogenization of nonconvex {H}amilton-{J}acobi equations
  in one space dimension.
\newblock {\em J. Differential Equations}, 261(5):2702--2737, 2016.

\bibitem[Bar94]{barles_book}
G.~Barles.
\newblock {\em Solutions de viscosit\'e des \'equations de
  {H}amilton-{J}acobi}, volume~17 of {\em Math\'ematiques \& Applications
  (Berlin) [Mathematics \& Applications]}.
\newblock Springer-Verlag, Paris, 1994.

\bibitem[BCD97]{bardi}
M.~Bardi and I.~Capuzzo-Dolcetta.
\newblock {\em Optimal control and viscosity solutions of
  {H}amilton-{J}acobi-{B}ellman equations}.
\newblock Systems \& Control: Foundations \& Applications. Birkh\"auser Boston
  Inc., Boston, MA, 1997.
\newblock With appendices by Maurizio Falcone and Pierpaolo Soravia.

\bibitem[CIL92]{users}
Michael~G. Crandall, Hitoshi Ishii, and Pierre-Louis Lions.
\newblock User's guide to viscosity solutions of second order partial
  differential equations.
\newblock {\em Bull. Amer. Math. Soc. (N.S.)}, 27(1):1--67, 1992.

\bibitem[CIPP98]{CIPP98}
G.~Contreras, R.~Iturriaga, G.~P. Paternain, and M.~Paternain.
\newblock Lagrangian graphs, minimizing measures and {M}a\~{n}\'{e}'s critical
  values.
\newblock {\em Geom. Funct. Anal.}, 8(5):788--809, 1998.

\bibitem[Dav19]{D19}
Andrea Davini.
\newblock Existence and uniqueness of solutions to parabolic equations with
  superlinear {H}amiltonians.
\newblock {\em Commun. Contemp. Math.}, 21(1):1750098, 25, 2019.

\bibitem[DK17]{DK17}
Andrea Davini and Elena Kosygina.
\newblock Homogenization of viscous and non-viscous {HJ} equations: a remark
  and an application.
\newblock {\em Calc. Var. Partial Differential Equations}, 56(4):Art. 95, 21,
  2017.

\bibitem[DK22]{DK22}
Andrea Davini and Elena Kosygina.
\newblock Stochastic homogenization of a class of nonconvex viscous {HJ}
  equations in one space dimension.
\newblock {\em J. Differential Equations}, 333:231--267, 2022.

\bibitem[DKY23]{DKY23+}
Andrea Davini, Elena Kosygina, and Atilla Yilmaz.
\newblock Stochastic homogenization of nonconvex viscous {H}amilton-{J}acobi
  equations in one space dimension, 2023.
\newblock Eprint arXiv:math.AP/2303.06415.

\bibitem[DS09]{DS09}
Andrea Davini and Antonio Siconolfi.
\newblock Exact and approximate correctors for stochastic {H}amiltonians: the
  1-dimensional case.
\newblock {\em Math. Ann.}, 345(4):749--782, 2009.

\bibitem[Eva92]{Evans92}
Lawrence~C. Evans.
\newblock Periodic homogenisation of certain fully nonlinear partial
  differential equations.
\newblock {\em Proc. Roy. Soc. Edinburgh Sect. A}, 120(3-4):245--265, 1992.

\bibitem[FFZ21]{FFZ}
William~M. Feldman, Jean-Baptiste Fermanian, and Bruno Ziliotto.
\newblock An example of failure of stochastic homogenization for viscous
  {H}amilton-{J}acobi equations without convexity.
\newblock {\em J. Differential Equations}, 280:464--476, 2021.

\bibitem[FS17]{FS}
William~M. Feldman and Panagiotis~E. Souganidis.
\newblock Homogenization and non-homogenization of certain non-convex
  {H}amilton-{J}acobi equations.
\newblock {\em J. Math. Pures Appl. (9)}, 108(5):751--782, 2017.

\bibitem[Gao16]{Gao16}
Hongwei Gao.
\newblock Random homogenization of coercive {H}amilton-{J}acobi equations in
  1d.
\newblock {\em Calc. Var. Partial Differential Equations}, 55(2):Art. 30, 39,
  2016.

\bibitem[Gom02]{Gom02}
Diogo~Aguiar Gomes.
\newblock A stochastic analogue of {A}ubry-{M}ather theory.
\newblock {\em Nonlinearity}, 15(3):581--603, 2002.

\bibitem[GT01]{GT_book}
David Gilbarg and Neil~S. Trudinger.
\newblock {\em Elliptic partial differential equations of second order}.
\newblock Classics in Mathematics. Springer-Verlag, Berlin, 2001.
\newblock Reprint of the 1998 edition.

\bibitem[Ish95]{Is95}
Hitoshi Ishii.
\newblock On the equivalence of two notions of weak solutions, viscosity
  solutions and distribution solutions.
\newblock {\em Funkcial. Ekvac.}, 38(1):101--120, 1995.

\bibitem[JKO94]{JKO}
V.~V. Jikov, S.~M. Kozlov, and O.~A. Ole{\u\i}nik.
\newblock {\em Homogenization of differential operators and integral
  functionals}.
\newblock Springer-Verlag, Berlin, 1994.
\newblock Translated from the Russian by G. A. Yosifian.

\bibitem[KRV06]{KRV}
E.~Kosygina, F.~Rezakhanlou, and S.~R.~S. Varadhan.
\newblock Stochastic homogenization of {H}amilton-{J}acobi-{B}ellman equations.
\newblock {\em Comm. Pure Appl. Math.}, 59(10):1489--1521, 2006.

\bibitem[KYZ20]{KYZ20}
Elena Kosygina, Atilla Yilmaz, and Ofer Zeitouni.
\newblock Homogenization of a class of one-dimensional nonconvex viscous
  {H}amilton-{J}acobi equations with random potential.
\newblock {\em Comm. Partial Differential Equations}, 45(1):32--56, 2020.

\bibitem[LPV87]{LPV}
P.-L. Lions, G.~Papanicolaou, and S.R.S. Varadhan.
\newblock Homogenization of {H}amilton-{J}acobi equation.
\newblock unpublished preprint, circa 1987.

\bibitem[LS05]{LS2005}
Pierre-Louis Lions and Panagiotis~E. Souganidis.
\newblock Homogenization of ``viscous'' {H}amilton-{J}acobi equations in
  stationary ergodic media.
\newblock {\em Comm. Partial Differential Equations}, 30(1-3):335--375, 2005.

\bibitem[LS10]{LS_revisited}
Pierre-Louis Lions and Panagiotis~E. Souganidis.
\newblock Stochastic homogenization of {H}amilton-{J}acobi and
  ``viscous''-{H}amilton-{J}acobi equations with convex
  nonlinearities---revisited.
\newblock {\em Commun. Math. Sci.}, 8(2):627--637, 2010.

\bibitem[Nak19]{Nak}
Atsushi Nakayasu.
\newblock Two approaches to minimax formula of the additive eigenvalue for
  quasiconvex {H}amiltonians.
\newblock {\em Proc. Amer. Math. Soc.}, 147(2):701--710, 2019.

\bibitem[QTY18]{QTY}
Jianliang Qian, Hung~V. Tran, and Yifeng Yu.
\newblock Min-max formulas and other properties of certain classes of nonconvex
  effective {H}amiltonians.
\newblock {\em Math. Ann.}, 372(1-2):91--123, 2018.

\bibitem[RT00]{RT}
Fraydoun Rezakhanlou and James~E. Tarver.
\newblock Homogenization for stochastic {H}amilton-{J}acobi equations.
\newblock {\em Arch. Ration. Mech. Anal.}, 151(4):277--309, 2000.

\bibitem[Sou99]{Sou99}
Panagiotis~E. Souganidis.
\newblock Stochastic homogenization of {H}amilton-{J}acobi equations and some
  applications.
\newblock {\em Asymptot. Anal.}, 20(1):1--11, 1999.

\bibitem[Yil21]{Y21b}
Atilla Yilmaz.
\newblock Stochastic homogenization of a class of quasiconvex viscous
  {H}amilton-{J}acobi equations in one space dimension.
\newblock {\em J. Differential Equations}, 300:660--691, 2021.

\bibitem[YZ19]{YZ19}
Atilla Yilmaz and Ofer Zeitouni.
\newblock Nonconvex homogenization for one-dimensional controlled random walks
  in random potential.
\newblock {\em Ann. Appl. Probab.}, 29(1):36--88, 2019.

\bibitem[Zil17]{Zil}
Bruno Ziliotto.
\newblock Stochastic homogenization of nonconvex {H}amilton-{J}acobi equations:
  a counterexample.
\newblock {\em Comm. Pure Appl. Math.}, 70(9):1798--1809, 2017.

\end{thebibliography}
\bibliographystyle{alpha}

\end{document}